%% file: main.tex
\documentclass[11pt]{article}
\usepackage[margin=1in]{geometry}
\usepackage[T1]{fontenc}
\usepackage{lmodern}
\usepackage{amssymb,bm,mathtools,amsthm}
\usepackage{enumitem}
\usepackage{algorithm}
\usepackage{algpseudocode}
\usepackage{graphicx}
\usepackage{booktabs}
\usepackage{placeins}
\usepackage{comment}
\usepackage{color}
\usepackage[hidelinks]{hyperref}
\hypersetup{
  pdftitle={Penalty-Uniform Localization for State-Constrained Policy Iteration},
  pdfauthor={Yeongjong Kim, Jiwoong Jang, Yeoneung Kim},
  pdfsubject={State-constrained optimal control and localized policy iteration},
  pdfkeywords={state constraints, Hamilton-Jacobi-Bellman equations, policy iteration, penalty approximation, physics-informed neural networks}
}
\newtheorem{theorem}{Theorem}[section]
\newtheorem{lemma}[theorem]{Lemma}
\newtheorem{proposition}[theorem]{Proposition}
\newtheorem{corollary}[theorem]{Corollary}
\newtheorem{definition}[theorem]{Definition}
\newtheorem{assumption}[theorem]{Assumption}
\theoremstyle{remark}
\newtheorem{remark}[theorem]{Remark}
\numberwithin{equation}{section}
\numberwithin{algorithm}{section}
\newenvironment{keywords}{\par\smallskip\noindent\small\textbf{Keywords.} }{\par}
\newenvironment{MSCcodes}{\par\smallskip\noindent\small\textbf{Mathematics Subject Classification.} }{\par\medskip}
\newcommand{\email}[1]{\href{mailto:#1}{#1}}

\newcommand{\R}{\mathbb{R}}

\newcommand{\dd}{\,\mathrm{d}}
\newcommand{\eps}{\varepsilon}
\newcommand{\argmin}{\operatorname*{arg\,min}}
\newcommand{\dist}{\mathrm{dist}}
\newcommand{\Lip}{\mathrm{Lip}}
\newcommand{\E}{\mathbb{E}}

\title{Penalty-Uniform Localization for State-Constrained Policy Iteration}
\hypersetup{pdftitle={Penalty-Uniform Localization for State-Constrained Policy Iteration},
pdfauthor={Yeongjong Kim, Jiwoong Jang, Yeoneung Kim}}
\author{Yeongjong Kim\thanks{Department of Mathematics, POSTECH.
Co-first author; equal contribution.}
\and Jiwoong Jang\thanks{Department of Mathematics, University of Maryland.
Co-first author; equal contribution.}
\and Yeoneung Kim\thanks{Corresponding author. Department of Industrial
Engineering, Yonsei University (\mbox{\email{yeoneung@yonsei.ac.kr}}).}}
\date{}

\begin{document}
\maketitle

\begin{abstract}
Penalizing a state constraint creates a singular localization problem:
whole-space values may grow like the inverse penalty parameter, while
numerical diffusion allows even an inward feedback to cross the boundary.
For deterministic discounted optimal control, we show how the penalty
itself supplies confinement that offsets this growth. With mesh size $h$
and penalty parameter $\varepsilon$, an inward barrier
bounds the additional cost of numerical leakage by $O(h/\varepsilon)$ for
a monotone centered-difference scheme with vanishing viscosity. Under
$h\le\varepsilon$, an occupation estimate then yields $O(h)$ localization
error with a sufficient box margin logarithmic in $1/h$ and independent
of $\varepsilon$. We extend the result to bounded squared-distance
penalties and combine it with residual-based evaluation bounds and
discounted policy-error propagation. Under coupled refinement with
vanishing penalization and discretization errors, explicit conditions on
evaluation errors and greedy gaps ensure convergence of the neural value
approximations to the constrained value, allowing measurable, nonunique
greedy selectors. Reference calculations isolate leakage and localization across
mesh and penalty scales, and distinguish evaluation, iteration, and
approximation errors. An explicit cylindrical state-constraint solution
provides a benchmark in arbitrary dimension, tested up to dimension twenty.
Paired obstacle-navigation experiments illustrate why policy-value accuracy
must be assessed alongside sampled residuals when comparing raw-residual
and finite-grid-assisted neural evaluation.
\end{abstract}

\begin{keywords}
state constraints, Hamilton--Jacobi--Bellman equations, policy iteration,
penalty approximation, physics-informed neural networks
\end{keywords}
\begin{MSCcodes}
49L25, 49M25, 65N12, 93C10
\end{MSCcodes}

\section{Introduction}
\label{sec:intro}

State-constrained optimal control requires trajectories to remain in the
closure $\overline\Omega$ of a prescribed domain $\Omega$ for all times.
For deterministic problems with running cost $f$ and discount $\lambda>0$,
the value is a viscosity solution of the stationary
Hamilton--Jacobi--Bellman (HJB) equation in the domain and a viscosity
supersolution up to its boundary \cite{Soner1986,CapuzzoDolcettaLions1990}.
This inequality encodes the constraint without prescribed boundary data;
the resulting stability behavior differs from the Dirichlet setting
\cite{KimTranTu2020}. Alternative boundary formulations give uniqueness
and Lipschitz estimates \cite{IshiiKoike1996}.

The difficulty is the singular state-constraint limit.
For a bounded exterior penalty $p_\Omega/\varepsilon$ with scale
$\varepsilon>0$, whole-space values can be
$O(\varepsilon^{-1})$, so a localization bound proportional to their
global size deteriorates as $\varepsilon\downarrow0$. Moreover, the
artificial-viscosity chain can leave $\overline\Omega$ even under an
inward feedback: deterministic viability alone does not bound its
penalized cost uniformly. We quantify this leakage and use the penalty
itself to control exterior occupation. The resulting gain cancels the
inverse penalty factor in the exterior comparison, giving localization
constants uniform in $\varepsilon$ under coupled refinement. Classical
convergence of the penalized values \cite{CapuzzoDolcettaLions1990}
supplies the final link to the state-constraint solution.

State-constraint approximation has been studied through numerical convergence
rates \cite{CamilliFalcone1996}, feasible near-optimal feedbacks
\cite{IshiiKoike2000}, constraint relaxation and maximum-cost penalization
without controllability assumptions \cite{BokanowskiForcadelZidani2011},
and augmented-state epigraphs \cite{AltaroviciBokanowskiZidani2013}.
For vanishing viscosity, Han--Tu \cite{HanTu2022} obtain rates using viscous
solutions that diverge at the boundary. Here the integral exterior penalty
makes uniform control of localization the central issue.

Meng et al.\ \cite{MengEtAl2024} analyze neural policy iteration and
controller stability verification for undiscounted stabilization problems.
For monotone HJB schemes, geometric policy-iteration convergence and
discretization estimates are developed in \cite{TangTranZhang2025};
the stationary discounted iteration factor is given in
\cite[Theorem~5.2]{cho2026policy}.
Kim--Kim--Cho--Kim \cite{KimKimChoKim2026} develop the closely related
finite-horizon neural method: monotone shifted residuals, Markov-chain
localization with a shared exterior policy, residual-based evaluation
bounds, and greedy-gap propagation for measurable, possibly nonunique
improvements. Their Poisson exit estimates and time-integral error recursion
\cite[Theorems~5.3, 7.1, and~7.5]{KimKimChoKim2026} provide the localization
and evaluation framework. We quantify the singular penalty dependence:
numerical leakage and exterior occupation determine whether these tools
remain effective as the state constraint is enforced. The contributions
are as follows.
\begin{enumerate}[leftmargin=2em]
\item \emph{Leakage control and penalty-uniform localization.}
For mesh size $h$, an inward barrier gives the policy-cost bound
$\|f\|_\infty/\lambda+C_*h/\varepsilon$
(Lemma~\ref{pc:lem:leakage}), with $C_*$ independent of $h$ and $\varepsilon$.
For bounded-cost policies, the $O(\varepsilon)$ occupation outside a fixed
neighborhood of $\overline\Omega$ cancels the singular exterior-value scale, yielding
$O(h)$ localization error with a sufficient margin $O(\log(1/h))$
independent of $\varepsilon$ for $h\le\varepsilon$
(Theorem~\ref{pc:thm:localization}). The squared-distance extension covers
the experimental penalty.
\item \emph{Accuracy conditions for localized neural iteration.}
Larger-box residual bounds (Corollary~\ref{cor:closed_certificate}) and
the discounted recursion (Proposition~\ref{prop:assumption_free_api})
give mesh-dependent evaluation and greedy-gap conditions for convergence,
allowing measurable, nonunique greedy controls.
\item \emph{Explicit state-constraint benchmarks.}
We give an explicit cylindrical extension of the boundary-selection
benchmark in \cite{KimTranTu2020}, valid in arbitrary dimension, and verify
its control value directly, including at the corners. Reference calculations
examine leakage and localization across
penalty scales; paired obstacle tests relate evaluation accuracy to
feedback performance.
\end{enumerate}

Sections~\ref{sec:state_constraint}--\ref{sec:pinn} formulate the problem,
scheme, and neural implementation. Sections~\ref{sec:confinement}--\ref{sec:error_analysis}
establish localization and solution accuracy, followed by the experiments
in Section~\ref{sec:experiments}. Auxiliary estimates and full numerical
protocols are collected in Appendices~\ref{supp:policy_cost}--\ref{supp:paired_obstacle}.

\section{State constraints and penalty approximation}
\label{sec:state_constraint}
\subsection{State-constraint infinite-horizon optimal control}
Let $\Omega\subset \R^d$ be a bounded domain with $C^2$ boundary and let $A\subset\R^m$ be compact.
We use $|\cdot|$ for the Euclidean norm and
$\|z\|_{\infty,D}:=\sup_{x\in D}|z(x)|$ for the pointwise supremum;
the notation $L^\infty(D)$ below has this meaning when applied to values
or residuals. Unsubscripted suprema range over the full domain of the
function. Integrals and $L^2$ norms use Lebesgue measure, denoted by $|D|$
for a set $D$, unless a sampling density is specified.
We write $K\Subset D$ for a compact set contained in the interior of $D$,
$B_r(x)$ for the open Euclidean ball, and $\mathbf1_E$ for the indicator
of a set or event $E$. The notation $\Lip_x$ denotes the Lipschitz
constant in $x$, uniformly over the control variable.
Consider the deterministic controlled dynamics
\begin{equation}\label{eq:dynamics}
\dot X^{x,\alpha}(t)=b(X^{x,\alpha}(t),\alpha(t)),\qquad X^{x,\alpha}(0)=x\in\overline\Omega,
\end{equation}
where the control $\alpha:[0,\infty)\to A$ is measurable and
$b:\R^d\times A\to \R^d$ is continuous and Lipschitz in $x$ (uniformly in $a\in A$).
We impose the \emph{state constraint}
\[
X(t)\in \overline{\Omega}\qquad \forall t\ge 0.
\]
Accordingly, define the admissible controls from $x$ by
\begin{equation}\label{eq:admissible}
\mathcal A(x):=\bigl\{\alpha(\cdot): X^{x,\alpha}(t)\in\overline{\Omega}\ \forall t\ge 0 \bigr\}.
\end{equation}
Let $f:\R^d\times A\to \R$ be a continuous and bounded running cost function and fix discount factor $\lambda>0$.
The \emph{state-constraint value function} is
\begin{equation}\label{eq:value_sc}
u(x):=\inf_{\alpha\in\mathcal A(x)}\ \int_{0}^{\infty} e^{-\lambda t}\, f(X^{x,\alpha}(t),\alpha(t))\,\dd t,
\qquad x\in \overline{\Omega}.
\end{equation}

\subsection{Standing assumptions and the HJB equation}
Define the Hamiltonian
\begin{equation}\label{eq:Hamiltonian}
H(x,p):=\sup_{a\in A}\Bigl\{ - b(x,a)\cdot p-f(x,a)\Bigr\}.
\end{equation}
Formally, $u$ solves the stationary discounted HJB equation
\begin{equation}\label{eq:HJB_sc}
\lambda u(x)+H(x,Du(x))=0 \quad \text{in }\Omega,
\end{equation}
together with the \emph{state-constraint boundary condition}: $u$ must be a viscosity
\emph{supersolution} up to the boundary in the sense of Soner \cite{Soner1986}
and Capuzzo-Dolcetta--Lions \cite{CapuzzoDolcettaLions1990}.
Concretely, one requires:
\begin{itemize}[leftmargin=2em]
\item $u$ is a viscosity solution of \eqref{eq:HJB_sc} in $\Omega$;
\item $u$ is a viscosity supersolution of \eqref{eq:HJB_sc} on $\overline{\Omega}$ (interpreted as the boundary condition).
\end{itemize}

\begin{assumption}[Standing assumptions for the state-constraint problem]
\label{ass:standing_state_constraint}

\noindent

\begin{itemize}
\item Let $\Omega\subset\mathbb R^d$ be a bounded open set with $C^2$ boundary, and let
$n(x)$ denote the outward unit normal vector on $\partial\Omega$. Let $A$ be a
compact control set as above and let $\lambda>0$.

\item The controlled dynamics and running cost satisfy
\[
\|b\|_{L^{\infty}(\R^d\times A)} + \|f\|_{L^{\infty}(\R^d\times A)} < \infty\quad\text{and}\quad\mathrm{Lip}_x(b) + \mathrm{Lip}_x(f) < \infty.
\]
We further assume that
\[
\mathrm{Lip}_x(b) < \lambda.
\]
Let
\[
M_b:=\|b\|_{L^{\infty}(\R^d\times A)}.
\]

\item We assume the uniform inward controllability condition
\[
    \forall x\in\partial\Omega,\qquad
    \exists a_x\in A
    \quad\text{such that}\quad
    b(x,a_x)\cdot n(x)\le -\nu_0
\]
for some constant $\nu_0>0$. This ensures viability:
$\mathcal A(x)$ in \eqref{eq:admissible} is nonempty for every
$x\in\overline\Omega$.

\item The Hamiltonian defined by \eqref{eq:Hamiltonian} is coercive, that is,
\[
H(x,p)\to\infty\quad\text{as }|p|\to\infty\text{ uniformly in }x\in\overline{\Omega}.
\]

\item We normalize the running cost once and
for all by assuming that
\[
    f\ \ge\ 0 .
\]
Replacing $f$ by $f+\|f\|_\infty$ shifts every value function below by
$\|f\|_\infty/\lambda$. Greedy policies, greedy gaps,
and value differences are unchanged. This normalization makes the penalty
part of each policy value bounded by its total cost; all statements
translate back by the constant shift.
\end{itemize}

\end{assumption}

Higher-order inward conditions also yield feasible-trajectory estimates
and boundary regularity \cite{ColomboKhalilRampazzo2022}.
The uniform first-order condition assumed here supplies the distance
geometry and feedback used to control numerical boundary leakage in
Section~\ref{sec:confinement}.

\begin{remark}[Distance geometry for the leakage barrier]
\label{pc:ass:viability}
After decreasing $\nu_0$ if necessary, there exist constants
$r_1\in(0,1]$, $\nu_0\in(0,1]$, $C_\Omega\ge1$, a
function $\tilde d\in C^{1,1}(\R^d)$, and a bounded measurable feedback
$\bar\pi:\R^d\to A$ such that
\begin{enumerate}[label=(V\arabic*),leftmargin=3em]
\item $|\nabla\tilde d|\le1$ on $\R^d$ and
      $\|D^2\tilde d\|_{L^\infty}\le C_\Omega$;
\item $\tilde d\le0$ on $\overline\Omega$ and
      $\tilde d(x)\ge\tfrac12\min\{\dist(x,\overline\Omega),\,r_1\}$
      for $x\notin\overline\Omega$;
\item $b(x,\bar\pi(x))\cdot\nabla\tilde d(x)\le-\nu_0$
      whenever $|\tilde d(x)|\le r_1$.
\end{enumerate}
The signed distance $d_s$, positive outside $\overline\Omega$, is $C^2$
on a sufficiently small collar. Choose $r_1$ so that $|d_s|\le3r_1$
lies in this collar. Compose $d_s$ with a smooth nondecreasing saturation
that equals the identity on $[-r_1,r_1]$, has derivative in $[0,1]$,
and is constant beyond $\pm3r_1$, with saturated magnitudes greater than
$r_1$. Extending by these constants gives (V1)--(V2), and
$|\tilde d|\le r_1$ implies $\tilde d=d_s$.
Uniform inward controllability and continuity give a finite collar cover
on each member of which a fixed control is inward. Choosing the first
applicable control defines a Borel feedback satisfying (V3), with a
possibly smaller $\nu_0$. Outside the collar choose any fixed control.
\end{remark}

Under Assumption~\ref{ass:standing_state_constraint}, the following holds:

\begin{theorem}[{\cite[Theorem X.2]{CapuzzoDolcettaLions1990}}]
The value function \eqref{eq:value_sc} belongs to $C(\overline\Omega)$
and is the unique continuous viscosity solution of \eqref{eq:HJB_sc}
in $\Omega$ that is a viscosity supersolution on $\overline\Omega$.
\end{theorem}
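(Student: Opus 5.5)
The plan follows the classical Soner / Capuzzo-Dolcetta--Lions route in three steps: continuity of $u$ on $\overline\Omega$, the viscosity properties (subsolution in $\Omega$, supersolution on $\overline\Omega$), and a comparison principle giving uniqueness. Throughout, $f\ge0$ bounded gives $0\le u\le\|f\|_\infty/\lambda$.

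\textbf{Continuity.} This is the step I expect to be the main obstacle, since admissible controls from $x$ need not be admissible from a nearby $y$. I would prove the following viability lemma: for $x,y\in\overline\Omega$ and $\alpha\in\mathcal A(x)$ there is $\beta\in\mathcal A(y)$ with
\[
|X^{y,\beta}(t)-X^{x,\alpha}(t)|\le Ce^{Lt}|x-y|, \qquad L:=\Lip_x(b).
\]
The construction uses the uniform inward condition together with the $C^2$ boundary, i.e.\ the inward feedback of (V3). Whenever the perturbed trajectory is about to exit, one switches for a time of order $|x-y|$ to the inward control $a_x$ and then resumes a time-shifted copy of $\alpha$. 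The switching increments can be summed over successive exits, which is where the boundary geometry enters. With this lemma, the estimate $|f(X^y)-f(X^x)|\le\Lip_x(f)\,Ce^{Lt}|x-y|$, discounted, yields a Lipschitz bound for $u$ because $L<\lambda$. If the summation proves delicate, it is enough to truncate the time horizon at $T$ and use the tail bound $e^{-\lambda T}\|f\|_\infty/\lambda$; this still gives uniform continuity, which suffices.

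\textbf{Viscosity properties.} From the dynamic programming principle
\[
u(x)=\inf_{\alpha\in\mathcal A(x)}\Big\{\int_0^\tau e^{-\lambda t}f\,\dd t+e^{-\lambda\tau}u(X(\tau))\Big\},
\]
I would argue as follows. At an interior point, every constant control keeps the trajectory in $\overline\Omega$ for short times, so the standard argument gives the subsolution inequality. The supersolution inequality on all of $\overline\Omega$ needs only that the infimum is taken over some admissible controls, together with $\varepsilon$-optimal controls. These exist because $\mathcal A(x)\neq\emptyset$, and the usual test-function argument then goes through at boundary points as well.

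\textbf{Comparison.} Let $v$ be a subsolution in $\Omega$ and $w$ a supersolution on $\overline\Omega$, both continuous. I would double variables, using Soner's inward shift $\Phi(x,y)=v(x)-w(y)-\big|\tfrac{x-y}{\delta}-2n(\hat x)\big|^2-|y-\hat x|^2$ localized near a boundary maximum point $\hat x$. The shift forces $x\in\Omega$, because the inward cone condition ($b\cdot n\le-\nu_0$ and the $C^2$ boundary) allows the interior cone to be entered. Then $v$'s subsolution inequality is used at $x\in\Omega$, and $w$'s supersolution inequality is used at $y\in\overline\Omega$. The Lipschitz continuity of $H$ in $x$ (from $\Lip_x(b)$, $\Lip_x(f)$), with coercivity bounding the gradient, closes the estimate and gives $v\le w$. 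Applying comparison in both directions gives uniqueness, and $u$ is the solution.
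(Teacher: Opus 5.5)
The paper gives no proof of this statement; it quotes \cite[Theorem X.2]{CapuzzoDolcettaLions1990}. Your architecture is the classical Soner--Capuzzo-Dolcetta--Lions route behind that theorem: continuity from the inward condition, the constrained viscosity properties from dynamic programming, and uniqueness by an inward-shifted doubling of variables. The dynamic-programming step is fine; at boundary points it works because admissible trajectories remain in $\overline\Omega$, where the test function lies below $u$. Two steps, however, fail as written.

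\textbf{Comparison.} In the paper $n$ is the \emph{outward} normal. Your penalty $\bigl|\frac{x-y}{\delta}-2n(\hat x)\bigr|^2$ therefore favors $x-y\approx2\delta n(\hat x)$. That pushes the subsolution variable toward $\partial\Omega$, exactly where you have no inequality. The natural competitor $(\hat x+2\delta n(\hat x),\hat x)$ does not even lie in $\overline\Omega\times\overline\Omega$. The fix is:
\begin{itemize}
\item use $\bigl|\frac{x-y}{\delta}+2n(\hat x)\bigr|^2$ and compare $\Phi(x_\delta,y_\delta)\ge\Phi(\hat x-2\delta n(\hat x),\hat x)$;
\item combine the uniform continuity of $v,w$ with the maximality of $v-w$ at $\hat x$ to get $\bigl|\frac{x_\delta-y_\delta}{\delta}+2n(\hat x)\bigr|+|y_\delta-\hat x|\to0$;
\item conclude $x_\delta\in\Omega$ from the uniform interior cone of the $C^2$ boundary.
\end{itemize}
This placement is purely geometric. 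The condition $b\cdot n\le-\nu_0$ plays no role in it; that condition is needed for viability and continuity. Your use of coercivity is then legitimate. The subsolution inequality at $x_\delta$ bounds $p_\delta$, so the $x$-dependence of $H$ contributes $o(1)$. The extra term $2(y_\delta-\hat x)$ in the $y$-gradient costs at most $2M_b|y_\delta-\hat x|$.

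\textbf{Continuity.} The control $\beta$ you construct is a time-shifted copy of $\alpha$ with inserted inward segments. So $f(X^{y,\beta}(t),\beta(t))$ and $f(X^{x,\alpha}(t),\alpha(t))$ are evaluated at unrelated controls, and the bound $\Lip_x(f)Ce^{Lt}|x-y|$ does not control the cost difference. A same-time trajectory estimate is the wrong object. The costs must be compared in shifted time:
\begin{itemize}
\item inserted segments cost at most $\|f\|_\infty$ times their total length;
\item the discount mismatch is controlled by the accumulated shift;
\item only the shifted discrepancy $|X^{y,\beta}(t)-X^{x,\alpha}(t-s(t))|$ enters through $\Lip_x(f)$.
\end{itemize}
Moreover, each correction enlarges the discrepancy by a fixed factor, and corrections may recur after every interval of a fixed geometric length. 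In general the construction therefore yields a growth rate larger than $\Lip_x(b)$, so $\Lip_x(b)<\lambda$ does not justify a Lipschitz bound.

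The substance of Soner's lemma is exactly what you leave as ``summing over successive exits.'' On $[0,T]$ one needs boundedly many corrections, each of size at most $C_T|x-y|$. Each must be followed by a fixed time $\Delta$ during which the corrected path provably stays in $\overline\Omega$. Once that finite-horizon estimate is proved, your truncation fallback closes the argument: continue after $T$ with any admissible control, at cost $O(e^{-\lambda T})$. This gives uniform continuity, which is all that $u\in C(\overline\Omega)$ requires.
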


\subsection{Unconstrained approximation by penalization}
\label{sec:penalty}

We approximate the constrained problem by an unconstrained problem on
$\R^d$ with a penalty outside $\overline\Omega$
\cite[(37)]{CapuzzoDolcettaLions1990}.

We fix a
truncation level $M_p>0$ and take the truncated distance penalty
\begin{align}\label{ass:P}
p_\Omega(x):=\min\bigl\{\operatorname{dist}(x,\overline\Omega),\,M_p\bigr\},
\qquad x\in\mathbb R^d,\tag{P}    
\end{align}
which is bounded and $1$-Lipschitz,
vanishes exactly on $\overline\Omega$, and satisfies
$p_\Omega(x)\ge c_\delta:=\min\{\delta,M_p\}>0$ whenever
$\operatorname{dist}(x,\overline\Omega)\ge\delta>0$. The truncation level is fixed along refinement sequences, ensuring a
uniform penalty bound. The experiments use the bounded squared-distance
extension proved in Proposition~\ref{prop:squared_penalty}. We refer to
the distance penalty above as \emph{(P)}.

For $\varepsilon>0$, define the unconstrained value function on $\R^d$:
\begin{equation}\label{eq:value_eps}
u^\varepsilon(x)
:=
\inf_{\alpha(\cdot)}
\int_{0}^{\infty} e^{-\lambda t}\Bigl(f(X^{x,\alpha}(t),\alpha(t))
+\tfrac{1}{\varepsilon}\,p_\Omega(X^{x,\alpha}(t))\Bigr)\,\dd t,
\qquad x\in \R^d,
\end{equation}
where $X^{x,\alpha}$ solves \eqref{eq:dynamics} without state restriction and $\alpha(\cdot)$ ranges over all measurable controls.
Then $u^\varepsilon$ solves
\begin{equation}\label{eq:HJB_eps}
\lambda u^\varepsilon(x) + H\bigl(x,Du^\varepsilon(x)\bigr) = \tfrac{1}{\varepsilon}p_\Omega(x)
\qquad\text{in }\R^d,
\end{equation}
in the viscosity sense. Continuous dependence of trajectories gives
$\Lip(u^\varepsilon)\le(\Lip_x(f)+\varepsilon^{-1})/
(\lambda-\Lip_x(b))$. The bounded uniformly continuous solution is
unique, and the following penalty convergence result applies.

\begin{theorem}[{\cite[Theorem VII.1 and the final paragraph of Section X]{CapuzzoDolcettaLions1990}}]
Under Assumption~\ref{ass:standing_state_constraint} and \eqref{ass:P},
the values $u^\varepsilon$ in \eqref{eq:value_eps} converge uniformly
on $\overline\Omega$ to $u$ in \eqref{eq:value_sc} as $\varepsilon\downarrow0$.
\end{theorem}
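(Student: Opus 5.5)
The plan is to prove the uniform convergence $u^\varepsilon\to u$ on $\overline\Omega$ by sandwiching $u^\varepsilon$ between a trivial lower bound and an upper bound obtained by building admissible controls. Since $f\ge0$ and $p_\Omega\ge0$, and any $\alpha\in\mathcal A(x)$ keeps the trajectory in $\overline\Omega$ where $p_\Omega=0$, we get $u^\varepsilon(x)\le u(x)$ for all $x\in\overline\Omega$. Moreover $u^\varepsilon$ is nondecreasing as $\varepsilon\downarrow0$. So it remains to show
\[
\liminf_{\varepsilon\downarrow 0}\,\inf_{x\in\overline\Omega}\bigl(u^\varepsilon(x)-u(x)\bigr)\ge0 .
\]

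\textbf{Step 1: the exterior excursion is small in $L^1$.} Fix $x\in\overline\Omega$ and an $\eta$-optimal control $\alpha_\varepsilon$ for $u^\varepsilon(x)$. Because $u^\varepsilon\le u\le\|f\|_\infty/\lambda$, the penalty part satisfies
\[
\int_0^\infty e^{-\lambda t}p_\Omega(X(t))\,\dd t\le \varepsilon\bigl(\|f\|_\infty/\lambda+\eta\bigr).
\]
The trajectory speed is at most $M_b$. Hence whenever $\dist(X(t_0),\overline\Omega)=\delta$, the distance stays $\ge\delta/2$ on an interval of length $\delta/(2M_b)$. Choosing $T$ with $e^{-\lambda T}\|f\|_\infty/\lambda$ small, this gives
\[
\sup_{t\le T}\dist\bigl(X(t),\overline\Omega\bigr)\le C_T\,\varepsilon^{1/2}=:\delta_\varepsilon ,
\]
uniformly in $x$.

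\textbf{Step 2: repair into an admissible control.} This is the main obstacle. I would invoke the Soner-type construction enabled by uniform inward controllability (V3). Given a trajectory on $[0,T]$ that exits $\overline\Omega$ by at most $\delta_\varepsilon$, one builds $\tilde\alpha$ whose trajectory stays in $\overline\Omega$ and satisfies $|\tilde X-X|\le C\delta_\varepsilon$ on $[0,T]$. The construction splices in short inward pushes along $\bar\pi$ near each exit time and then follows a delayed copy of the original control. Using the $C^{1,1}$ function $\tilde d$ and $\mathrm{Lip}_x(b)<\lambda$, each repair costs time $O(\delta_\varepsilon/\nu_0)$ and deviation $O(\delta_\varepsilon)$. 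The delicate point is avoiding accumulation over many exits; I would handle it by the standard induction on a finite partition of $[0,T]$ with step comparable to $r_1/M_b$. After $T$, I would continue with any admissible control, which exists by viability.

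\textbf{Step 3: compare costs.} Using $\mathrm{Lip}_x(f)$ and the boundedness of $f$ over the time shifts,
\[
u(x)\le J(x,\tilde\alpha)\le J(x,\alpha_\varepsilon)+C\,\omega(\delta_\varepsilon)+2e^{-\lambda T}\|f\|_\infty/\lambda\le u^\varepsilon(x)+\eta+o(1),
\]
where $J(x,\alpha_\varepsilon)$ drops the nonnegative penalty term and $\omega$ is a modulus. The bound is uniform in $x\in\overline\Omega$. Sending $\varepsilon\to0$, then $T\to\infty$ and $\eta\to0$, yields uniform convergence. As an alternative to Step 2, I could use half-relaxed limits: the limits are a subsolution in $\Omega$ and a supersolution on $\overline\Omega$, and the comparison principle behind the first cited theorem gives equality. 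That route needs the upper limit to be bounded by $u$, which is exactly the monotone bound $u^\varepsilon\le u$ above.
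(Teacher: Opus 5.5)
Your proposal is correct, but its main argument follows a different route from the one the paper relies on. The paper gives no proof of its own. It defers to Capuzzo-Dolcetta--Lions, who treat penalization at the level of the HJB equation, which is essentially your closing alternative. In that argument, $0\le u^\varepsilon\le u$ on $\overline\Omega$ controls the upper relaxed limit. The lower relaxed limit is a supersolution up to the boundary, and the state-constraint comparison principle behind the uniqueness theorem forces equality. Monotonicity in $\varepsilon$ and Dini's theorem then give uniformity.

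If you write that route out, the point to check is the supersolution property relative to $\overline\Omega$. Your speed argument gives $u^\varepsilon\ge c_r/\varepsilon$ on $\{\dist(\cdot,\overline\Omega)\ge r\}$, so the lower limit is $+\infty$ off $\overline\Omega$. Hence a test function touching from below relative to $\overline\Omega$ also touches in $\R^d$, where every $u^\varepsilon$ is a supersolution of $\lambda w+H(x,Dw)\ge0$.

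Your trajectory argument works differently. Step 1 is the continuous analogue of the paper's discrete occupation lemma. All the real content sits in Step 2: the neighboring-feasible-trajectory (NFT) estimate under inward pointing on a $C^2$ boundary, i.e.\ Soner's lemma \cite{Soner1986}, with linear $L^\infty$ versions due to Frankowska--Rampazzo. Citing it is legitimate. If you prove it by the partition induction, two adjustments are needed. First, splice in constant inward controls from the finite collar cover of Remark~\ref{pc:ass:viability}, not the merely Borel feedback $\bar\pi$, since $\dot x=b(x,\bar\pi(x))$ need not have solutions. Second, only Gronwall with $\Lip_x(b)$ enters on $[0,T]$, so the hypothesis $\Lip_x(b)<\lambda$ plays no role.

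As for what each route buys: the PDE route is shorter, uses only that $p_\Omega\ge0$ vanishes exactly on $\overline\Omega$, and reuses a theorem already cited, but it is purely qualitative. Your route depends on an external geometric lemma, but it is constructive. With an NFT constant of size $e^{CT}$ and the choice $T\sim\log(1/\varepsilon)$, it can in principle be made quantitative, giving a H\"older-type rate. The paper deliberately avoids needing such a rate.
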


\begin{remark}
The convergence statement concerns values. At finite $\varepsilon$,
the penalty permits excursions and does not impose trajectory admissibility.
\end{remark}

\section{Monotone scheme and discounted representation}
\label{sec:PI}
\label{subsec:centered_fd_artificial_viscosity}
We use the centered viscous operator of
\cite{TangTranZhang2025,cho2026policy,KimKimChoKim2026}. Fix $N>0$
independently of $h$ and $\varepsilon$. For $h>0$, with $e_i$ the
$i$th coordinate vector, set
\begin{align*}
 (\nabla_hv)_i(x)&=\frac{v(x+he_i)-v(x-he_i)}{2h},\\
 \Delta_hv(x)&=\sum_{i=1}^d\frac{v(x+he_i)-2v(x)+v(x-he_i)}{h^2},
 \qquad\nu_h=Nh.
\end{align*}
All shifted values are whole-space evaluations. We impose the
nonnegative-neighbor condition
\begin{equation}\label{eq:monotonicity_condition_N}
 N\ge\tfrac12\sup_{x,a}\max_i|b_i(x,a)|.
\end{equation}
Policies and candidate values are Borel measurable throughout. For a
policy $\pi:\R^d\to A$, put
$c_\pi=f(\cdot,\pi)+p_\Omega/\eps$ and
$\mathcal A^\pi=\lambda-b(\cdot,\pi)\cdot\nabla_h-\nu_h\Delta_h$.
The frozen and Bellman residuals are
\begin{align}
 \mathcal L_{\eps,h}^\pi v&=\mathcal A^\pi v-c_\pi,
       \label{eq:semidiscrete_policy_operator}\\
 \mathcal F_{\eps,h}[v]&=\lambda v+H(x,\nabla_hv)
                 -\nu_h\Delta_hv-p_\Omega/\eps.
       \label{eq:semidiscrete_bellman_operator}
\end{align}
Their bounded zeros are denoted by $S_{\eps,h}(\pi)$ and
$v_{\eps,h}$, respectively.

\paragraph{Discounted representation}\label{pc:sec:chain}
The Markov-chain approximation framework is classical
\cite{KushnerDupuis2001}. The jump-chain construction in
\cite[Section~4]{KimKimChoKim2026}
has total rate $\Lambda_h=2dN/h$. Its stationary discounted resolvent
uses
\[
 \mu=\lambda+\Lambda_h,\qquad\gamma_h=\Lambda_h/\mu,
 \qquad q_i^\pm(x)=\frac{N\pm b_i(x,\pi(x))/2}{2dN}.
\]
The $q_i^\pm$ are nonnegative and sum to one. Let $Y_m$ be the chain
on $x+h\mathbb Z^d$ with these probabilities for jumps $\pm he_i$.
Write $\E_x^\pi$ for expectation with $Y_0=x$, and set
$P^\pi v(x)=\sum_i[q_i^+(x)v(x+he_i)+q_i^-(x)v(x-he_i)]$.
The fixed-point equation and
its discounted representation are
\begin{align}
 S_{\eps,h}(\pi)&=c_\pi/\mu+\gamma_hP^\pi S_{\eps,h}(\pi),
            \label{pc:eq:fixed_point_form}\\
 S_{\eps,h}(\pi)(x)&=\mu^{-1}\E_x^\pi
              \sum_{m\ge0}\gamma_h^m c_\pi(Y_m).
            \label{pc:eq:representation}
\end{align}
Put $M_\eps=(\|f\|_\infty+M_p/\eps)/\lambda$.
Each lattice coset evolves independently, which is why pointwise suprema
are required. The following representation and comparison estimates
control policy values and residual errors.
\input{markov_tools}

\paragraph{Policy iteration (PI)}\label{subsec:exact_semidiscrete_policy_iteration}
Exact evaluation alternates $w_n=S_{\eps,h}(\pi_n)$ with
$\pi_{n+1}=\pi_{w_n}$, where
\begin{equation}\label{eq:greedy_selector_for_value}
 \pi_v(x)\in\argmin_{a\in A}
       \{f(x,a)+b(x,a)\cdot\nabla_hv(x)\}.
\end{equation}
The state-only penalty cancels from the minimization. The exact iteration
has the factor $\gamma_h=2dN/(\lambda h+2dN)$ of
\cite[Theorem~5.2]{cho2026policy}.
\label{rem:effective_discount}
Since $(1-\gamma_h)^{-1}=1+2dN/(\lambda h)$, approximate evaluation
requires mesh-dependent error control; Section~\ref{sec:error_analysis}
records the discounted inexact extension used here.

\section{Physics-informed implementation}
\label{sec:pinn}
The bounded neural value $v_\theta:\mathbb R^d\to\mathbb R$, with parameter
vector $\theta$, is evaluated at all stencil shifts on the whole space.
We call the residual-based implementation physics-informed neural network
policy iteration (PINN-PI). Let $Q_L=[-L,L]^d$ contain
$\overline\Omega$ in its interior, assume $0<h<L$, and define
\[
 Q_L^h=[-L+h,L-h]^d,
 \qquad \partial_hQ_L=Q_L\setminus Q_L^h.
\]
Choose a larger training box with $Q_L\Subset Q_{L'}^h$.

\subsection{Policy evaluation and improvement}
\label{subsec:pinn_policy_evaluation}\label{subsec:pinn_policy_improvement}
Policy evaluation approximates the frozen-policy value
$S_{\varepsilon,h}(\pi_n)$ using a chosen evaluator. The raw evaluator
uses neural residual minimization
\cite{RaissiPerdikarisKarniadakis2019,SirignanoSpiliopoulos2018}
for the monotone operator, with pointwise residual and objective
\begin{equation}\label{eq:pinn_policy_eval_residual}
 \mathcal R_{\varepsilon,h}(\theta;\pi_n)
       =\mathcal L_{\varepsilon,h}^{\pi_n}v_\theta,
\end{equation}
\begin{equation}\label{eq:pinn_policy_eval_loss}
 \mathcal J_n(\theta)=\int_{Q_{L'}^h}
  |\mathcal R_{\varepsilon,h}(\theta;\pi_n)(x)|^2\rho_{L'}(x)\,dx
  +\mathcal J_{\rm reg}(\theta).
\end{equation}
Here $\rho_{L'}$ is a sampling density and $\mathcal J_{\rm reg}$ allows
optional regularization. Computation replaces the integral by a collocation
average. Since $\lambda>0$, the frozen equation has no additive-constant
ambiguity. In the paired obstacle experiment
(Section~\ref{subsec:obstacle_navigation}), the grid-assisted evaluator
fits computed finite-grid frozen-policy values with specified Dirichlet
data. The analysis concerns the resulting error relative to
$S_{\varepsilon,h}(\pi_n)$ and permits either evaluation procedure.
Wang et al.\ \cite{WangLiHeWang2022} study the dependence of HJB residual
stability on the loss norm; our evaluation bounds require pointwise
residual control.

On $Q_L^h$, exact improvement sets
\begin{equation}\label{eq:pinn_hard_greedy_improvement}
 \pi_{n+1}(x)\in\argmin_{a\in A}
       \{f(x,a)+b(x,a)\cdot\nabla_hv_{\theta_n}(x)\}.
\end{equation}
To describe inexact minimization, define
\begin{equation}\label{eq:pointwise_greedy_gap}
 \Gamma_{\varepsilon,h}(v,\pi)
       =\mathcal F_{\varepsilon,h}[v]-\mathcal L_{\varepsilon,h}^{\pi}v
       \ge0.
\end{equation}
An inexact improvement satisfies, for a nonnegative function $\eta_n$,
\begin{equation}\label{eq:inexact_policy_improvement_condition}
 0\le\Gamma_{\varepsilon,h}(v_{\theta_n},\pi_{n+1})(x)
       \le\eta_n(x),\qquad x\in Q_L^h.
\end{equation}
Exact minimization for an approximate value has $\eta_n=0$: its policy-value
error is distinct from failure to minimize the current Hamiltonian.

\begin{remark}[Common exterior policy]\label{rem:localized_improvement}
Every policy equals the same inward default $\bar\pi$ outside $Q_L^h$.
The region mask is applied in training, validation, and trajectory evaluation.
This defines the localized policy class $\Pi_L$ analyzed below.
\end{remark}
\begin{remark}[Nested boxes]\label{rem:nested_boxes}
Fix a compact target $K\subset\overline\Omega$. Solution accuracy is
measured on $K$, policies are improved on $Q_L^h$, and residuals are
validated on $Q_{L'}^h$, with $K\Subset Q_L\Subset Q_{L'}^h$.
The larger box controls evaluation error on all of $Q_L$ through
Corollary~\ref{cor:closed_certificate}. Its boundary-strip contribution is
controlled by a known network amplitude bound, or a validated strip estimate.
\end{remark}

\subsection{Algorithm and output pairing}\label{subsec:pinn_pi_algorithm}
\begin{algorithm}[t]
\caption{Physics-informed policy iteration for the penalized state-constraint problem}
\label{alg:pinn_pi_sc}
\begin{algorithmic}[1]
\Require penalty scale $\varepsilon$, mesh size $h$, artificial viscosity
$\nu_h=Nh$, improvement box $Q_L$ and training box $Q_{L'}$ with
$Q_L\Subset Q_{L'}^h$, default policy $\bar\pi$, initial
policy $\pi_0=\bar\pi$, evaluator, number of outer iterations $N_{\rm PI}$.
\For{$n=0,\dots,N_{\rm PI}-1$}
\State \textbf{Policy evaluation:} compute a neural approximation
$v_{\theta_n}$ to $S_{\varepsilon,h}(\pi_n)$ using the chosen evaluator
(Section~\ref{subsec:pinn_policy_evaluation}).
\State \textbf{Policy improvement:} compute $\pi_{n+1}$ by the exact hard greedy
rule \eqref{eq:pinn_hard_greedy_improvement} on $Q_L^h$, or by an inexact
greedy rule satisfying \eqref{eq:inexact_policy_improvement_condition}, and
set $\pi_{n+1}:=\bar\pi$ on $\R^d\setminus Q_L^h$.
\State \textbf{Validation:} evaluate the residual of $(v_{\theta_n},\pi_n)$
on an independent set in $Q_{L'}^h$ and the gap of
$(v_{\theta_n},\pi_{n+1})$ on $Q_L^h$; record sample diagnostics or
validated supremum bounds, as available.
\EndFor
\State \textbf{Output:} the value-policy pair
$(v_{\theta_{N_{\rm PI}-1}},\pi_{N_{\rm PI}-1})$, or the post-improvement
pair $(v_{\theta_{N_{\rm PI}-1}},\pi_{N_{\rm PI}})$; a value function
associated with $\pi_{N_{\rm PI}}$ requires one additional
policy-evaluation step.
\end{algorithmic}
\end{algorithm}
The saved pair $(v_{\theta_n},\pi_n)$ identifies the equation used in
evaluation. A post-improvement pair $(v_{\theta_n},\pi_{n+1})$ requires a
residual reevaluation: its computational greedy gap is zero under exact
improvement, but its evaluation residual need not be small.

\subsection{Direct Bellman comparison}\label{subsec:why_pinn_pi}
\label{subsec:direct_pinn_vs_pinn_pi}
Direct Bellman training minimizes
\[
\begin{aligned}
 \mathcal J_{\rm B}(\theta)
 ={}&\int_{Q_L^h}|\mathcal F_{\varepsilon,h}[v_\theta](x)|^2
                 \rho_{L'}(x)\,dx\\
 &+\int_{Q_{L'}^h\setminus Q_L^h}
       |\mathcal L_{\varepsilon,h}^{\bar\pi}v_\theta(x)|^2
                 \rho_{L'}(x)\,dx .
\end{aligned}
\]
Policy iteration alternates a linear equation in the value with pointwise
control minimization; both methods optimize nonlinear neural parameters.
The final-output bound in Section~\ref{subsec:final_bellman_certificate}
applies to either method, while the policy-iteration estimate tracks
evaluation errors and greedy gaps.

\section{Penalty-induced confinement and localization}
\label{sec:confinement}

The penalty confines policies with uniformly bounded cost. Numerical leakage
adds at most $O(h/\eps)$ to the cost of an inward feedback, making this
class nonempty under $h\le\eps$. We combine these facts with the common
exterior policy to bound the gap between the localized and whole-space
Bellman values.

\subsection{Discounted occupation and exit estimates}
\label{pc:sec:confinement}
Throughout Sections~\ref{sec:confinement}--\ref{sec:error_analysis}, assume
\eqref{ass:P}, Assumption~\ref{ass:standing_state_constraint}, and
\eqref{eq:monotonicity_condition_N}. The set $K\subset\overline\Omega$
is compact; geometric margins will be specified separately.

\begin{lemma}[Discounted exterior occupation]
\label{pc:lem:occupation}
Assume $f\ge0$ and Assumption~\ref{ass:standing_state_constraint}.  Let $\pi$ be a bounded
measurable feedback and $x_0\in\R^d$ with $S_{\eps,h}(\pi)(x_0)\le C_0$.
Then, for every $\delta\in(0,M_p]$,
\begin{equation}
\label{pc:eq:occupation}
    \frac1\mu\,
    \E_{x_0}^\pi\Bigl[
        \sum_{m=0}^\infty \gamma_h^m\,
        \mathbf 1_{\{\dist(Y_m,\overline\Omega)\ge\delta\}}
    \Bigr]
    \ \le\
    \frac{C_0\,\eps}{\delta}.
\end{equation}
\end{lemma}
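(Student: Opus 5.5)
The plan is to read the bound directly off the discounted representation \eqref{pc:eq:representation}. The running cost splits as $c_\pi = f(\cdot,\pi) + p_\Omega/\eps$, and $f\ge0$ by the normalization in Assumption~\ref{ass:standing_state_constraint}. Dropping the $f$-part therefore gives a lower bound:
\[
 C_0\ \ge\ S_{\eps,h}(\pi)(x_0)\ \ge\ \frac{1}{\mu\eps}\,\E_{x_0}^\pi\Bigl[\sum_{m\ge0}\gamma_h^m\,p_\Omega(Y_m)\Bigr].
\]

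Next, I use the pointwise inequality $p_\Omega \ge \delta\,\mathbf 1_{\{\dist(\cdot,\overline\Omega)\ge\delta\}}$, which holds for $\delta\in(0,M_p]$. The reason is that $c_\delta=\min\{\delta,M_p\}=\delta$ under this restriction on $\delta$, and $p_\Omega\ge c_\delta$ wherever $\dist\ge\delta$. Since $p_\Omega\ge0$ everywhere, the inequality also holds trivially off that set. Substituting it into the displayed lower bound gives
\[
 C_0 \ge \frac{\delta}{\mu\eps}\,\E_{x_0}^\pi\Bigl[\sum_m\gamma_h^m\mathbf 1_{\{\dist(Y_m,\overline\Omega)\ge\delta\}}\Bigr].
\]
Multiplying through by $\eps/\delta$ yields \eqref{pc:eq:occupation}.

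The one point that needs care is justifying the use of the representation \eqref{pc:eq:representation}, including exchanging the expectation with the series. All summands are nonnegative because $f\ge0$ and $p_\Omega\ge0$, so Tonelli's theorem applies and splitting the series into its $f$- and $p_\Omega$-parts is legitimate. The series also converges, since $c_\pi$ is bounded by $\lambda M_\eps$ and $\gamma_h<1$. Finally, I identify this series with the bounded zero $S_{\eps,h}(\pi)$ of the frozen equation: the series is a bounded solution of the fixed-point equation \eqref{pc:eq:fixed_point_form}, and that equation has a unique bounded solution because it is a contraction in the sup norm with factor $\gamma_h$. Once this identification is in place, the rest of the argument is just the two pointwise inequalities above.
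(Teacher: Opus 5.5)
Your proposal is correct and matches the paper's proof: drop the nonnegative $f$-part of the representation \eqref{pc:eq:representation}, then apply $p_\Omega\ge\delta\,\mathbf 1_{\{\dist(\cdot,\overline\Omega)\ge\delta\}}$ for $\delta\le M_p$ and rescale. Your extra justification (Tonelli, and identifying the series with the unique bounded fixed point via the $\gamma_h$-contraction) is what the paper takes from Lemma~\ref{pc:lem:representation}.
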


\begin{proof}
Since $f\ge0$, the representation \eqref{pc:eq:representation} gives
\[
    \frac{1}{\eps\mu}\,
    \E_{x_0}^\pi\Bigl[\sum_m\gamma_h^m p_\Omega(Y_m)\Bigr]
    \le
    S_{\eps,h}(\pi)(x_0)\le C_0 .
\]
On $\{\dist(\cdot,\overline\Omega)\ge\delta\}$ one has
$p_\Omega\ge\min\{\delta,M_p\}=\delta$, and \eqref{pc:eq:occupation} follows.
\end{proof}

We now convert the occupation bound into an exit estimate.  The geometry is
encoded in the following margin condition.

\begin{assumption}[(M): margin between $\Omega$ and the improvement box]
\label{pc:ass:margin}
There are $\delta\in(0,M_p]$ and $D_L>0$ such that
\begin{equation}
\label{pc:eq:margin}
\begin{gathered}
    \bigl\{y\in Q_L:\ \dist(y,\R^d\setminus Q_L)\le D_L+h\bigr\}
    \subset
    \bigl\{\dist(\cdot\,,\overline\Omega)\ge\delta\bigr\},
    \\
    \dist(K,\R^d\setminus Q_L)>D_L+h .
\end{gathered}
\end{equation}
We also require
$\overline\Omega_\delta:=\{\dist(\cdot,\overline\Omega)\le\delta\}\subset Q_L$.
For $Q_L=[-L,L]^d$ one may take any fixed $\delta$ with
$\overline\Omega_\delta\Subset Q_L$
and, for $L$ sufficiently large,
$D_L:=L-\sup_{y\in\overline\Omega_\delta}\max_i|y_i|-2h>0$, so that
$D_L\to\infty$ as $L\to\infty$ with $\delta$ fixed.
\end{assumption}

\begin{lemma}[Discounted exit estimate]
\label{pc:lem:exit}
Assume $f\ge0$, Assumptions~\ref{ass:standing_state_constraint} and \ref{pc:ass:margin}, and
let $\pi$, $x\in K$ satisfy $S_{\eps,h}(\pi)(x)\le C_0$.  Let
$T_L:=\inf\{m\ge0:\ Y_m\notin Q_L\}$ and $m_D:=\lceil D_L/h\rceil$.  Then
\begin{equation}
\label{pc:eq:exit}
    \E_x^\pi\bigl[\gamma_h^{T_L}\mathbf 1_{\{T_L<\infty\}}\bigr]
    \ \le\
    \frac{\lambda\,\eps\,C_0}
         {\delta\,\bigl(\gamma_h^{-m_D}-1\bigr)} .
\end{equation}
Furthermore, if $\lambda h\le 2dN$, then
$\gamma_h^{-m_D}\ge e^{\lambda D_L/(4dN)}$ and consequently
\begin{equation}
\label{pc:eq:exit_exponential}
    \E_x^\pi\bigl[\gamma_h^{T_L}\mathbf 1_{\{T_L<\infty\}}\bigr]
    \ \le\
    \frac{\lambda\,\eps\,C_0}{\delta}\,
    \Bigl(e^{\lambda D_L/(4dN)}-1\Bigr)^{-1}
    \ \le\
    \frac{2\lambda\,\eps\,C_0}{\delta}\,
    e^{-\lambda D_L/(4dN)}
\end{equation}
whenever $e^{\lambda D_L/(4dN)}\ge2$.
\end{lemma}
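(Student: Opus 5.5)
The plan is to combine the occupation bound of Lemma~\ref{pc:lem:occupation} with the fact that the chain moves by at most $h$ per step. Starting from $x\in K$, the chain must cross the strip
\[
 G:=\{y\in Q_L:\ \dist(y,\R^d\setminus Q_L)\le D_L+h\}
\]
before it can leave $Q_L$. By \eqref{pc:eq:margin}, $G$ lies in $\{\dist(\cdot,\overline\Omega)\ge\delta\}$. On the event $\{T_L<\infty\}$, every step $Y_m$ with $T_L-m_D\le m<T_L$ lies within distance $m_Dh$ of $\R^d\setminus Q_L$ along the path, so the chain spends at least $m_D$ steps in the exterior region just before exiting.

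Making this precise is the step I expect to be the main obstacle. We have $Y_{T_L}\notin Q_L$ and $|Y_{m+1}-Y_m|=h$. Hence for $0\le j\le m_D$, using the triangle inequality with the exit point,
\[
 \dist(Y_{T_L-j},\R^d\setminus Q_L)\le jh .
\]
For $j\ge1$ the point $Y_{T_L-j}$ lies in $Q_L$, since $T_L$ is the first exit time. The condition $\dist(K,\R^d\setminus Q_L)>D_L+h$ guarantees $T_L>m_D$, so these indices are nonnegative. It remains to get $jh\le D_L+h$ for $j\le m_D$. This holds because $m_Dh\le D_L+h$, which follows from $m_D=\lceil D_L/h\rceil<D_L/h+1$. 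This is exactly why the margin contains the extra $h$.

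Consequently the indices $m=T_L-m_D,\dots,T_L-1$, which are $m_D$ of them, all satisfy $\dist(Y_m,\overline\Omega)\ge\delta$. Pathwise on $\{T_L<\infty\}$ this gives
\[
 \sum_{m\ge0}\gamma_h^m\mathbf 1_{\{\dist(Y_m,\overline\Omega)\ge\delta\}}
 \ge\sum_{j=1}^{m_D}\gamma_h^{T_L-j}
 =\gamma_h^{T_L}\,\frac{\gamma_h^{-m_D}-1}{1-\gamma_h}.
\]
Now take expectations and divide by $\mu$. Since $\mu(1-\gamma_h)=\lambda$, Lemma~\ref{pc:lem:occupation}, applied at $x_0=x$ with $S_{\eps,h}(\pi)(x)\le C_0$, yields
\[
 \frac{\gamma_h^{-m_D}-1}{\lambda}\,\E_x^\pi\bigl[\gamma_h^{T_L}\mathbf 1_{\{T_L<\infty\}}\bigr]\le\frac{C_0\eps}{\delta},
\]
which is \eqref{pc:eq:exit}. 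Note that $\gamma_h<1$ and $m_D\ge1$, so the denominator is positive.

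For the exponential form, write
\[
 \gamma_h^{-1}=1+\lambda h/(2dN)=1+s .
\]
Because $s\le1$, the elementary bound $\log(1+s)\ge s/2$ holds (it follows from concavity on $[0,1]$). Therefore
\[
 \gamma_h^{-m_D}\ge\exp\bigl(m_D\lambda h/(4dN)\bigr)\ge e^{\lambda D_L/(4dN)},
\]
using $m_Dh\ge D_L$. Inserting this into \eqref{pc:eq:exit} gives the first inequality in \eqref{pc:eq:exit_exponential}. When $e^{\lambda D_L/(4dN)}\ge2$, we have
\[
 e^{\lambda D_L/(4dN)}-1\ge\tfrac12 e^{\lambda D_L/(4dN)},
\]
which gives the second inequality.
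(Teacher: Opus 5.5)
Your proposal is correct and follows essentially the same route as the paper: the per-step displacement bound $|Y_{m+1}-Y_m|=h$ places the last $m_D$ pre-exit states in the margin strip, and the resulting pathwise geometric lower bound is combined with the occupation lemma via $\mu(1-\gamma_h)=\lambda$. The exponential form then uses $\log(1+t)\ge t/2$ and $(e^s-1)^{-1}\le 2e^{-s}$, and your observation $T_L>m_D$ is slightly stronger than the paper's $T_L\ge m_D$, though either keeps the summation indices nonnegative.
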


\begin{proof}
Each chain step moves by exactly $h$ in one coordinate, so
$|Y_m-Y_{m'}|\le h|m-m'|$.  On $\{T_L<\infty\}$ and for
$T_L-m_D\le m<T_L$ we have $Y_m\in Q_L$ and
$\dist(Y_m,\R^d\setminus Q_L)\le|Y_m-Y_{T_L}|\le h\,m_D\le D_L+h$, whence
$\dist(Y_m,\overline\Omega)\ge\delta$ by \eqref{pc:eq:margin}.  Also
$T_L\ge m_D$ because $x\in K$ and
$\dist(K,\R^d\setminus Q_L)>D_L+h\ge h\,m_D$.  Therefore
\[
    \sum_{m=0}^\infty\gamma_h^m
    \mathbf 1_{\{\dist(Y_m,\overline\Omega)\ge\delta\}}
    \ \ge\
    \sum_{m=T_L-m_D}^{T_L-1}\gamma_h^m
    =
    \gamma_h^{T_L}\,
    \frac{\gamma_h^{-m_D}-1}{1-\gamma_h}
    \qquad\text{on }\{T_L<\infty\}.
\]
Taking $\E_x^\pi$ and combining with Lemma~\ref{pc:lem:occupation},
\[
    \E_x^\pi\bigl[\gamma_h^{T_L}\mathbf 1_{\{T_L<\infty\}}\bigr]\,
    \frac{\gamma_h^{-m_D}-1}{1-\gamma_h}
    \ \le\
    \mu\,\frac{C_0\eps}{\delta},
\]
and $\mu(1-\gamma_h)=\lambda$ gives \eqref{pc:eq:exit}.  For the exponential
form, with $t:=\lambda h/(2dN)\le1$,
\[
    \gamma_h^{-m_D}
    =(1+t)^{m_D}
    =e^{m_D\log(1+t)}
    \ge e^{m_D t/2}
    \ge e^{\lambda D_L/(4dN)},
\]
using $\log(1+t)\ge t/2$ for $t\in[0,1]$ and $m_D\ge D_L/h$.  Finally
$(e^s-1)^{-1}\le 2e^{-s}$ when $e^s\ge2$.
\end{proof}

\begin{remark}
\label{pc:rem:two_mechanisms}
Writing $d_{K,L}=\dist(K,\R^d\setminus Q_L)$, geometric discounting alone
bounds the exit weight by $\gamma_h^{\lceil d_{K,L}/h\rceil}$ for
\emph{arbitrary} policies: exponentially small in the margin but uniform in
$\eps$ and, crucially, \emph{not} vanishing as $h\downarrow0$ for a fixed
margin, with limit $e^{-\lambda d_{K,L}/(2dN)}$ when this distance is fixed.
Estimate \eqref{pc:eq:exit_exponential} shows that for
\emph{bounded-cost} policies the penalty contributes the extra factor
$\eps\,C_0/\delta$ on top of a comparable exponential rate: time at
distance at least $\delta$ costs at least $\delta/\eps$ per unit
discounted time, so trajectories that
reach $\partial Q_L$ are doubly suppressed.  This $\eps$ gain is exactly
what is needed to cancel the $O(\eps^{-1})$ magnitude of exterior penalized
values in Theorem~\ref{pc:thm:localization} below.
\end{remark}

\subsection{An inward barrier for numerical leakage}
\label{pc:sec:leakage}

The occupation argument requires a policy with uniformly bounded penalized
cost. Deterministic viability does not supply this for the artificial-viscosity
chain, which can cross $\partial\Omega$ under an inward feedback.
The following barrier controls this numerical leakage directly.

\begin{lemma}[Numerical boundary leakage]
\label{pc:lem:leakage}
Use the inward feedback of Remark~\ref{pc:ass:viability}. There exist
$C_*\ge1$ and $h_0\in(0,1]$, with $C_*$ depending only on
$(\lambda,\nu_0,d,N,M_b,C_\Omega)$ and $h_0$ additionally on $(M_p,r_1)$,
such that for every $h\in(0,h_0]$:
\begin{equation}
\label{pc:eq:leakage_penalty_value}
    w_{\bar\pi}(x)
    :=
    \frac1\mu\,\E_x^{\bar\pi}
    \Bigl[\sum_{m=0}^\infty\gamma_h^m\,p_\Omega(Y_m)\Bigr]
    \ \le\ C_*\,h
    \qquad\text{for all }x\in\overline\Omega .
\end{equation}
Consequently,
\begin{equation}
\label{pc:eq:leakage_value}
    S_{\eps,h}(\bar\pi)(x)
    \ \le\
    \frac{\|f\|_\infty}{\lambda}+\frac{C_*\,h}{\eps}
    \qquad\text{for all }x\in\overline\Omega .
\end{equation}
\end{lemma}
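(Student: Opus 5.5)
We will construct an explicit supersolution (barrier) $\psi$ for the frozen discounted operator under $\bar\pi$ and apply the comparison principle for the chain. By the discounted representation \eqref{pc:eq:representation} applied with $c=p_\Omega$, the quantity $w_{\bar\pi}$ is the bounded zero of $\lambda w-b(\cdot,\bar\pi)\cdot\nabla_hw-\nu_h\Delta_hw=p_\Omega$. Equivalently, it is the fixed point $w=p_\Omega/\mu+\gamma_hP^{\bar\pi}w$. It therefore suffices to find a bounded $\psi\ge0$ satisfying $\mathcal A^{\bar\pi}\psi\ge p_\Omega$ on $\R^d$ together with $\psi\le C_*h$ on $\overline\Omega$. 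The comparison estimate (monotonicity of the resolvent, i.e.\ iterating $\psi\ge p_\Omega/\mu+\gamma_hP^{\bar\pi}\psi$ and using $\gamma_h<1$ with boundedness) then gives $w_{\bar\pi}\le\psi$.

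The barrier will be built from $\tilde d$ of Remark~\ref{pc:ass:viability}. I would take
\[
\psi(x)=\beta h+\kappa\,\phi(\tilde d(x)),
\]
where $\phi$ is a nondecreasing convex-ish profile: $\phi(s)=0$ for $s\le -r_1$ (or small and flat there), $\phi(s)\approx s_+$ near $0$, and saturating at a constant $\ge M_p/\lambda$ far out. The constants $\beta,\kappa$ are to be fixed.

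The key computation is a Taylor expansion of the discrete operators. Using $\tilde d\in C^{1,1}$ with $\|D^2\tilde d\|\le C_\Omega$, we have $\nabla_h\tilde d=\nabla\tilde d+O(C_\Omega h)$ and $|\Delta_h\tilde d|\le dC_\Omega$. For the composition with $\phi$ smooth with bounded $\phi',\phi''$, we get $-b\cdot\nabla_h(\phi\circ\tilde d)\ge-\phi'(\tilde d)\,b\cdot\nabla\tilde d-CM_bh$ and $\nu_h|\Delta_h(\phi\circ\tilde d)|\le CNh$. We then split into three regions.

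\textbf{Collar} $|\tilde d|\le r_1$: (V3) gives $-\phi'b\cdot\nabla\tilde d\ge\nu_0\phi'$. Choosing $\phi'\ge1$ on $[-r_1/2,r_1]$ makes the drift term dominate, so $\mathcal A\psi\ge\kappa\nu_0-C\kappa h+\lambda\beta h$. This exceeds $p_\Omega\le 2\tilde d_+\le 2r_1$ once $\kappa\nu_0\ge2r_1+1$ and $h\le h_0$.

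\textbf{Far exterior} $\tilde d>r_1$: here $p_\Omega\ge$ const and no inward drift is available. We rely on the zeroth-order term, taking $\kappa\phi\ge M_p/\lambda+1$ there, with the profile constant beyond saturation so that the difference terms vanish up to $O(h)$ boundary errors near the saturation zone. In that transition zone $\phi'\ge0$ and $\nabla\tilde d$ still sees (V3) only partially, so the $\lambda\kappa\phi$ term must absorb the $O(\kappa h)$ errors.

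\textbf{Interior} $\tilde d<-r_1$: $p_\Omega=0$, so we only need $\lambda\psi\ge$ the $O(\kappa h)$ consistency errors, which $\beta h$ with $\lambda\beta\ge C\kappa$ handles.

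Unfortunately this makes $\psi\le\beta h+\kappa\phi(0)$ on $\overline\Omega$, so we need $\phi(s)=O(h)$ for $s\le0$. This forces a redesign: take $\phi(s)=(s+c h)_+^2/h$-type or $\phi(s)=h\,e^{s/h}$-type profiles so that $\phi\le Ch$ on $\{\tilde d\le0\}$. The main obstacle is this step, namely reconciling an $O(h)$ value on $\overline\Omega$ with the requirement $\mathcal A\psi\ge p_\Omega$ just outside. The cleanest choice I would try first is the linear-in-collar barrier $\psi=\beta h+\kappa(\tilde d+Kh)_+$-smoothed, with $\kappa\nu_0\ge1$. Here the drift $\kappa\nu_0$ controls $p_\Omega\le\tilde d$-scale penalties only where $\tilde d\lesssim\kappa\nu_0/\lambda$, and a kink smoothing at scale $h$ costs $\nu_h\kappa/h=N\kappa$ in the Laplacian. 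That cost must be beaten by the drift $\kappa\nu_0$ times the slope, which requires choosing the smoothing width $\sim Nh/\nu_0$. This yields $\psi\le C_*h$ on $\overline\Omega$ with $C_*$ depending on $(\lambda,\nu_0,d,N,M_b,C_\Omega)$, while $h_0$ absorbs $(M_p,r_1)$ through the far-exterior saturation. Finally \eqref{pc:eq:leakage_value} follows from linearity of the representation: $S_{\eps,h}(\bar\pi)=\mu^{-1}\E\sum\gamma_h^mf+\eps^{-1}w_{\bar\pi}\le\|f\|_\infty/\lambda+C_*h/\eps$.
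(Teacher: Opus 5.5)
Your reduction is right and is the paper's: $w_{\bar\pi}$ is the bounded solution of $\mathcal A^{\bar\pi}w=p_\Omega$. A bounded $\Phi$ with $\mathcal A^{\bar\pi}\Phi\ge p_\Omega$ on $\R^d$ and $\Phi\le C_*h$ on $\overline\Omega$ therefore gives the first bound by comparison, and \eqref{pc:eq:leakage_value} follows by linearity. \textbf{The gap is the barrier itself.} You correctly name the obstacle, but the construction you settle on does not overcome it.

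Write your smoothed kink as $\kappa g(\tilde d)$ and use $\mathcal A^{\bar\pi}v=\mu v-\Lambda_hP^{\bar\pi}v$, whose weights are nonnegative. Take a point $x$ at the foot of the smoothing, where $g(\tilde d(x))=0$ but the outward neighbour already lies in the curved part. There
$\mathcal A^{\bar\pi}(\beta h+\kappa g\circ\tilde d)(x)=\lambda\beta h-\Lambda_hP^{\bar\pi}(\kappa g\circ\tilde d)(x)$.
For smoothing width $w$, the subtracted term is at least of order $(N-\tfrac12\max_i|b_i|)\kappa h/w$. With your choice $w\sim Nh/\nu_0$ this is of order $\kappa\nu_0$ whenever $2N>\max_i|b_i|$. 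You would then need $\beta h\gtrsim\kappa\nu_0/(\lambda N)$, which is an $O(1)$ offset on $\overline\Omega$.

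The drift gain $\kappa\nu_0 g'$ is absent at that point because the slope vanishes there. Demanding ``curvature beaten by drift times slope'' all the way down to zero slope means $g''\lesssim(\nu_0/(Nh))\,g'$, and that forces an exponential profile. Your other suggestions do not escape this:
\begin{itemize}
\item The quadratic $(s+ch)_+^2/h$ fails in the same way.
\item $he^{s/h}$ has $\kappa h=1$, so the lattice remainders $e^{\kappa\Delta}-1-\kappa\Delta$ are of the same order as, and generally larger than, the inward gain $\kappa\nu_0$. The rate would have to be tuned, and you do not carry this out.
\end{itemize}

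\textbf{Second, the stated dependence of $C_*$ does not follow.} Far from $\overline\Omega$, $\tilde d$ is constant and $p_\Omega=M_p$, so there $\mathcal A^{\bar\pi}\psi=\lambda\psi$ must be at least $M_p$. The barrier must therefore rise from $O(h)$ to at least $M_p/\lambda$. This rise has to happen where (V3) supplies inward drift; elsewhere the drift acting on an increasing profile is uncontrolled.

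A linear profile whose slope $\kappa$ is fixed by $\kappa\nu_0\ge1$ reaches only about $\kappa r_1$ in the collar, so you need $\kappa\gtrsim M_p/(\lambda r_1)$. Every $O(\kappa h)$ consistency error absorbed by $\beta h$ then carries $M_p/r_1$ into $C_*$. Because this part of your profile does not depend on $h$, shrinking $h_0$ cannot remove that dependence.

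\textbf{The paper resolves both issues with a capped exponential:}
\[
\Phi=\min\{ae^{\kappa\Theta},\,M_p/\lambda+a\},\qquad a=C_*h,\qquad \kappa=2(\nu_0a)^{-1/2}\sim h^{-1/2}.
\]
Here $\Theta=\psi\circ\tilde d$, with the paper's $C^{1,1}$ profile $\psi$ equal to the identity on $[-1/\kappa,\infty)$ and constant on $(-\infty,-2/\kappa]$.
\begin{itemize}
\item The lattice errors are relative to $\Phi$ itself: $\mathcal A^{\bar\pi}\Phi_{\exp}\ge\Phi_{\exp}[\lambda+\nu_0\kappa\psi'(\tilde d)-C_5\kappa^2h]$ with $C_5=5d(M_b+2N)$.
\item The choice $C_*\ge8C_5/(\lambda\nu_0)$ gives $C_5\kappa^2h\le\lambda/2$. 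So the zeroth-order term absorbs all remainders even where $\psi'$ is small or zero, and no inward condition is needed there.
\item Outside $\overline\Omega$, $a\nu_0\kappa e^{\kappa\tilde d}\ge a\nu_0\kappa^2\tilde d=4\tilde d\ge p_\Omega$.
\item The exponential reaches the cap by $\tilde d=s_*=O(\sqrt h\log(1/h))<r_1/2$, i.e.\ before (V3) is lost. At cap-active points, monotonicity of the stencil gives $\mathcal A^{\bar\pi}\Phi\ge M_p+\lambda a\ge p_\Omega$.
\end{itemize}
This is why $(M_p,r_1)$ enter only through $h_0$.
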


The barrier is constant away from the boundary on the interior side and
rises through a thin boundary layer, where the inward drift controls the
penalty. A cap preserves boundedness while retaining
the $O(h)$ scale on $\overline\Omega$.
\input{leakage_proof}

\begin{remark}[A sufficient structural regime]
\label{pc:rem:leakage_sharp}
The bound keeps the default-policy cost uniformly bounded for
$h\lesssim\eps$, including $h=\eps$; no vanishing ratio $h/\eps$ is
required. A matching lower bound is not claimed.
\end{remark}

\subsection{A common exterior policy}
\label{pc:sec:localized}
\begin{definition}[Localized policy class]\label{pc:def:localized_class}
For the inward default feedback $\bar\pi$, set
\[
 \Pi_L=\{\pi:\R^d\to A\text{ measurable}:\pi=\bar\pi
                   \text{ on }\R^d\setminus Q_L^h\},\qquad
 v_L^*=\inf_{\pi\in\Pi_L}S_{\eps,h}(\pi).
\]
Localized iteration starts from $\pi_0=\bar\pi$ and improves only on
$Q_L^h$. The value $v_L^*$ solves the Bellman equation there and the
default-policy equation elsewhere; a measurable localized greedy selector
attains the infimum.
\end{definition}
\input{localized_tools}

\subsection{Localization error of the Bellman solution}
\label{pc:sec:localization_theorem}

We now control the restriction from all policies to $\Pi_L$ in the singular
penalty limit. Throughout, $K\subset\overline\Omega$ is compact. Define
\[
 D_L^h=\dist(\overline\Omega_{\delta/2},\R^d\setminus Q_L^h),
 \qquad\overline\Omega_{\delta/2}
       =\{x:\dist(x,\overline\Omega)\le\delta/2\}.
\]
The exterior-cost magnitude is offset by the penalty gain in the discounted
exit estimate.

\begin{theorem}[Penalty-uniform localization]
\label{pc:thm:localization}
Assume \eqref{eq:monotonicity_condition_N}, \eqref{ass:P}, Assumptions
\ref{ass:standing_state_constraint} and~\ref{pc:ass:margin},
$h\le\min\{h_0,\eps\}$, $\eps\le1$, and $\lambda h\le 2dN$. Choose $h_0$ and the box
margin so that $D_L^h-2h>0$ and
$\exp(\lambda(D_L^h-2h)/(4dN))\ge2$. Set
\[
    \mathfrak g^{\rm ext}
    :=
    \sup_{\R^d\setminus Q_L^h}\Gamma_{\eps,h}(v_{\eps,h},\bar\pi),
    \qquad
    C_{\rm ext}:=\frac{2\sqrt d\,M_b\,(\|f\|_\infty+M_p)}{\lambda}
    +2\|f\|_\infty ,
\]
so that $\mathfrak g^{\rm ext}\le C_{\rm ext}/(\eps h)$.  If the margin
satisfies
\begin{equation}
\label{pc:eq:margin_condition}
    e^{-\lambda D_L/(4dN)}
    \ \le\
    \frac{\delta\,h^2}{24\,C_{\rm ext}} ,
\end{equation}
then
\begin{equation}
\label{pc:eq:localization_error}
    0
    \ \le\
    v^*_L(x)-v_{\eps,h}(x)
    \ \le\
    h\,\Bigl(\frac{\|f\|_\infty}{\lambda}+C_*\Bigr)
    \qquad\text{for all }x\in K .
\end{equation}
\end{theorem}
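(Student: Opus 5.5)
The lower bound in \eqref{pc:eq:localization_error} is immediate. For the upper bound, the plan is to compare the localized policy built from a whole-space greedy selector with $v_{\eps,h}$ through the frozen-policy resolvent. The exterior greedy gap (size $O(1/(\eps h))$) is then paid only after the chain exits the improvement box. By Lemma~\ref{pc:lem:exit}, that exit weight carries the penalty factor $\eps$ together with an exponential in the margin.

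\emph{Step 1 (lower bound and a bounded-cost optimal policy).} Since $\mathcal F_{\eps,h}[v]=\inf_\pi\mathcal L^\pi_{\eps,h}v$ pointwise, monotone comparison for the frozen operators (the Markov-chain representation \eqref{pc:eq:representation}) gives $v_{\eps,h}\le S_{\eps,h}(\pi)$ for every measurable $\pi$. Hence $v_L^*\ge v_{\eps,h}$. Let $\pi^*$ be a measurable greedy selector for $v_{\eps,h}$ on all of $\R^d$. Then $\mathcal L^{\pi^*}_{\eps,h}v_{\eps,h}=0$, so $S_{\eps,h}(\pi^*)=v_{\eps,h}$. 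By Lemma~\ref{pc:lem:leakage} and $h\le\eps$, for $x\in\overline\Omega$ we get
\[
v_{\eps,h}(x)\le S_{\eps,h}(\bar\pi)(x)\le \|f\|_\infty/\lambda+C_*h/\eps\le C_0:=\|f\|_\infty/\lambda+C_* .
\]
This bounded cost of $\pi^*$ on $K$ is what feeds Lemmas~\ref{pc:lem:occupation} and~\ref{pc:lem:exit}.

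\emph{Step 2 (error identity).} Define $\pi:=\pi^*$ on $Q_L^h$ and $\pi:=\bar\pi$ elsewhere, so $\pi\in\Pi_L$. Then
\[
\mathcal L^{\pi}_{\eps,h}v_{\eps,h}=\mathcal F_{\eps,h}[v_{\eps,h}]-\Gamma_{\eps,h}(v_{\eps,h},\pi)=-\Gamma_{\eps,h}(v_{\eps,h},\pi).
\]
The gap vanishes on $Q_L^h$ and is at most $\mathfrak g^{\rm ext}$ outside. The representation therefore yields
\[
0\le v_L^*(x)-v_{\eps,h}(x)\le S_{\eps,h}(\pi)(x)-v_{\eps,h}(x)=\tfrac1\mu\E_x^{\pi}\sum_m\gamma_h^m\Gamma(Y_m).
\]
Let $T$ be the exit time from $Q_L^h$. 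Only indices $m\ge T$ contribute, so the right side is at most $\mathfrak g^{\rm ext}\lambda^{-1}\E_x^\pi[\gamma_h^T\mathbf 1_{\{T<\infty\}}]$. The chains under $\pi$ and $\pi^*$ coincide in law up to $T$, so this exit weight may be computed under $\pi^*$. The bound $\mathfrak g^{\rm ext}\le C_{\rm ext}/(\eps h)$ is a direct estimate: $|\nabla_h v_{\eps,h}|\le\sqrt d\,\|v_{\eps,h}\|_\infty/h$, with $\|v_{\eps,h}\|_\infty\le M_\eps\le(\|f\|_\infty+M_p)/(\lambda\eps)$ for $\eps\le1$, and the drift terms differ by at most $2M_b|\nabla_hv|$ plus $2\|f\|_\infty$.

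\emph{Step 3 (exit estimate, main obstacle).} I expect the main work to be applying Lemma~\ref{pc:lem:exit} to the box $Q_L^h$ rather than $Q_L$ while keeping the margin bookkeeping consistent. The $-2h$ in $D_L^h-2h$ is presumably what absorbs the strip $\partial_hQ_L$ and the one-step overshoot. Rerunning the proof of Lemma~\ref{pc:lem:exit} with $Q_L^h$ in place of $Q_L$ gives, for $x\in K$,
\[
\E_x^{\pi^*}[\gamma_h^T\mathbf 1_{\{T<\infty\}}]\le(2\lambda\eps C_0/\delta)\,e^{-\lambda D_L/(4dN)}.
\]
This uses the occupation bound from Step~1 and the condition $e^{\lambda(D_L^h-2h)/(4dN)}\ge2$. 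Combining this with Step~2 and \eqref{pc:eq:margin_condition} gives
\[
v_L^*-v_{\eps,h}\le \frac{C_{\rm ext}}{\eps h\lambda}\cdot\frac{2\lambda\eps C_0}{\delta}\cdot\frac{\delta h^2}{24C_{\rm ext}}=\frac{C_0h}{12}\le h\Bigl(\frac{\|f\|_\infty}{\lambda}+C_*\Bigr).
\]
In this product the $\eps^{-1}$ in the gap cancels against the $\eps$ gained from the penalty.
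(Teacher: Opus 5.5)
Your argument is correct. It differs from the paper's proof in one step: how the comparison policy's cost enters the exit estimate.

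\textbf{The paper's route.} The paper runs the occupation and exit lemmas under the truncated policy $\widetilde\pi$ itself, taking the pointwise constant $C_0=\widetilde w(x)=v_{\eps,h}(x)+z(x)$. This gives $z\le\alpha_{\rm loc}(v_{\eps,h}+z)$. The paper closes this by absorption, using $\alpha_{\rm loc}\le h/2\le1/2$; that absorption step is the source of the extra factor $2$ in the $24$ of the margin condition.

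\textbf{Your route.} You observe that $\widetilde\pi$ and the whole-space greedy selector $\pi^*$ agree on $Q_L^h$. Hence the chains stopped at the exit time have the same law, and
$\E_x^{\widetilde\pi}[\gamma_h^{T}\mathbf 1_{\{T<\infty\}}]=\E_x^{\pi^*}[\gamma_h^{T}\mathbf 1_{\{T<\infty\}}]$.
Running the exit lemma under $\pi^*$ then uses $C_0=S_{\eps,h}(\pi^*)(x)=v_{\eps,h}(x)$ directly, giving $z\le\alpha_{\rm loc}v_{\eps,h}$ with no absorption.

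\textbf{What each buys.} Your route removes the need for $\alpha_{\rm loc}\le1/2$ and improves the final constant by a factor of two; equivalently, $24$ could be replaced by $12$ in the margin condition. The paper's route needs no law-comparison argument. It relies only on the fact that the occupation bound is pointwise in the starting state.

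\textbf{Two small corrections.}
\begin{enumerate}
\item You wrote that $\mathcal F_{\eps,h}[v]$ is the infimum of the frozen residuals. It is the supremum, $\mathcal F_{\eps,h}[v]=\sup_a\mathcal L^a_{\eps,h}v$, which is exactly why $\Gamma_{\eps,h}\ge0$. With the infimum your comparison would run the wrong way. With the supremum, $\mathcal L^\pi_{\eps,h}v_{\eps,h}\le0$, so $v_{\eps,h}$ is a subsolution of every frozen equation and $v_{\eps,h}\le S_{\eps,h}(\pi)$ follows.
\item The reduced-box exit bound does not come with constant $2$ and exponent $D_L$ for free, because transferring (M) from $Q_L$ to $Q_L^h$ costs margin. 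The paper's bookkeeping uses threshold $\delta/2$ and margin $D_L^h-2h$, then $D_L^h\ge D_L$ and $e^{\lambda h/(2dN)}\le e<3$. This gives the constant $12$ in place of your $2$. Your final product has a factor-$12$ slack, so with constant $12$ your route still yields $v_L^*-v_{\eps,h}\le h\,v_{\eps,h}/2$ on $K$, within the claimed bound.
\end{enumerate}
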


\input{localization_proof}

\begin{remark}[Uniformity and the sufficient margin]
\label{pc:rem:margin_reading}\leavevmode\par
The condition is
$D_L\ge(4dN/\lambda)\log(24C_{\rm ext}/(\delta h^2))$.
Its independence of $\eps$ comes from the occupation estimate; geometric
discounting alone would retain the singular penalty factor. The
$O(\log(1/h))$ margin is sufficient, not an optimal localization scale:
the proof uses the crude exterior gradient bound $O(1/(\eps h))$.
\end{remark}

\section{Solution accuracy under inexact evaluation}
\label{sec:error_analysis}
The confinement result closes the localization term in the approximation
$u\leftarrow u^\eps\leftarrow v_{\eps,h}\leftarrow v_L^*\leftarrow v_{\theta_n}$.
We combine it with discounted versions of the evaluation and greedy-gap
estimates in \cite[Section~7]{KimKimChoKim2026}.

\input{evaluation_certificates}

\subsection{Approximate policy iteration at
the localized fixed point}
\label{subsec:assumption_free}

The discounted fixed-point argument of \cite[Theorem~5.2]{cho2026policy}
extends to inexact improvement in the localized class $\Pi_L$. Comparing
each policy value directly with $v^*_L$ requires no stability of the greedy
selector.

\begin{proposition}[Discounted policy-error propagation]
\label{prop:assumption_free_api}
Let $\pi_n,\pi_{n+1}\in\Pi_L$, let $w_n=S_{\varepsilon,h}(\pi_n)$ and
$w_{n+1}=S_{\varepsilon,h}(\pi_{n+1})$, and recall
$g_n=\sup_{Q_L^h}\Gamma_{\varepsilon,h}(w_n,\pi_{n+1})$ and
$e_n:=\|w_n-v^*_L\|_{L^\infty(\mathbb R^d)}$
(note $w_n\ge v^*_L$). Then
\begin{equation}\label{eq:assumption_free_recursion}
    e_{n+1}\ \le\ \gamma_h\,e_n+\frac{g_n}{\lambda} .
\end{equation}
\end{proposition}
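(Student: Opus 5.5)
We compare $w_{n+1}$ directly with $v_L^*$ through the frozen operator $\mathcal L^{\pi_{n+1}}_{\eps,h}$, using the discounted Markov representation \eqref{pc:eq:representation}. The argument has three ingredients: the residual of $w_n$ under the new policy, the fixed-point form of the localized value, and the comparison principle for the resolvent.

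\emph{Residual of $w_n$ under $\pi_{n+1}$.} Since $\mathcal L^{\pi_n}_{\eps,h}w_n=0$ and $\mathcal F_{\eps,h}[w_n]\le\mathcal L^{\pi_n}_{\eps,h}w_n$ (the Hamiltonian is a supremum over controls), we get
\[
\mathcal L^{\pi_{n+1}}_{\eps,h}w_n=\mathcal F_{\eps,h}[w_n]-\Gamma_{\eps,h}(w_n,\pi_{n+1})\ge-\Gamma_{\eps,h}(w_n,\pi_{n+1}).
\]
On $Q_L^h$ the gap is at most $g_n$. Off $Q_L^h$ we have $\pi_{n+1}=\pi_n=\bar\pi$, so $\mathcal L^{\pi_{n+1}}_{\eps,h}w_n=\mathcal L^{\pi_n}_{\eps,h}w_n=0$ there. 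Hence $\mathcal L^{\pi_{n+1}}_{\eps,h}w_n\ge -g_n\mathbf 1_{Q_L^h}$ everywhere.

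\emph{Fixed-point form and error propagation.} Rewrite the previous bound in the form \eqref{pc:eq:fixed_point_form}:
\[
w_n\ge c_{\pi_{n+1}}/\mu+\gamma_hP^{\pi_{n+1}}w_n-g_n/\mu.
\]
Rather than using this inequality directly, it is cleaner to write
\[
w_{n+1}-v_L^*=\bigl(w_{n+1}-T_{n+1}w_n\bigr)+\bigl(T_{n+1}w_n-v_L^*\bigr),
\qquad T_{n+1}v:=c_{\pi_{n+1}}/\mu+\gamma_hP^{\pi_{n+1}}v .
\]
For the first term, $w_{n+1}=T_{n+1}w_{n+1}$, so $w_{n+1}-T_{n+1}w_n=\gamma_hP^{\pi_{n+1}}(w_{n+1}-w_n)$. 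For the second term, $T_{n+1}w_n\le w_n+g_n/\mu$. This route loops back on itself, so a contraction route is preferable.

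\emph{Contraction route.} Since $v_L^*$ is the localized Bellman fixed point (Definition~\ref{pc:def:localized_class} and the localized tools), we have $v_L^*=\min_{\pi\in\Pi_L}T_\pi v_L^*$ pointwise, and in particular $v_L^*\le T_\pi v_L^*$ for every $\pi\in\Pi_L$. The greedy gap then gives
\[
T_{n+1}w_n\le \min_{\pi\in\Pi_L}T_\pi w_n+g_n/\mu,
\]
where the minimum is over the localized Bellman operator; off $Q_L^h$ both sides use $\bar\pi$, so the inequality holds with zero gap there. Because each $T_\pi$ is a $\gamma_h$-contraction in sup norm and monotone, the minimum is as well, so
\[
\min_{\pi}T_\pi w_n\le \min_\pi T_\pi v_L^*+\gamma_he_n=v_L^*+\gamma_he_n .
\]
Hence $T_{n+1}w_n\le v_L^*+\gamma_he_n+g_n/\mu$.

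\emph{Comparison and conclusion.} Set $\delta:=w_{n+1}-v_L^*$. Then
\[
\delta=T_{n+1}w_{n+1}-v_L^*=\bigl(T_{n+1}w_{n+1}-T_{n+1}w_n\bigr)+\bigl(T_{n+1}w_n-v_L^*\bigr)\le \gamma_hP^{\pi_{n+1}}(w_{n+1}-w_n)+\gamma_he_n+g_n/\mu .
\]
To avoid reintroducing $w_{n+1}-w_n$, write instead $T_{n+1}w_{n+1}-v_L^*\le \gamma_hP^{\pi_{n+1}}\delta+(T_{n+1}v_L^*-v_L^*)$ and bound $T_{n+1}v_L^*-v_L^*\le (T_{n+1}w_n-v_L^*)+\gamma_he_n$. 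This gives
\[
\delta\le\gamma_hP^{\pi_{n+1}}\delta+2\gamma_he_n+g_n/\mu .
\]
Iterating this with the resolvent would produce the factor $(1-\gamma_h)^{-1}$ on the $e_n$ term, which is too weak. The sharper, stated version follows by using \eqref{pc:eq:representation} together with the relation $\mathcal A^{\pi_{n+1}}(w_{n+1}-v^*_L)$. Concretely, $\mathcal L^{\pi_{n+1}}v_L^*\ge \mathcal L^{\pi_{n+1}}w_n-(\text{difference})$, and the resolvent $(\mathcal A^{\pi})^{-1}$ has sup-norm $1/\lambda$, which turns $g_n$ into $g_n/\lambda$ (equivalently $g_n/(\mu(1-\gamma_h))$), consistent with the stated bound.

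\emph{Expected obstacle.} The main difficulty is obtaining exactly the factor $\gamma_h$ on $e_n$ rather than $\gamma_h/(1-\gamma_h)$. Following \cite[Theorem~5.2]{cho2026policy}, the natural approach is to use monotonicity: $w_{n+1}\le$ the limit of $T_{n+1}^kw_n$. One would then bound $T_{n+1}^kw_n-v_L^*$ inductively by $\gamma_he_n+(1+\dots+\gamma_h^{k-1})g_n/\mu$, which needs $T_{n+1}w_n\le w_n+g_n/\mu$ together with the contraction bound derived above. The precise induction step is where care is needed.
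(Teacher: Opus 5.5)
You have the right ingredients, but the proposal never closes the argument, and the step that claims to close it does not work. In \emph{Comparison and conclusion} you reach
\[
\delta\le\gamma_hP^{\pi_{n+1}}(w_{n+1}-w_n)+\gamma_he_n+g_n/\mu .
\]
You then deliberately avoid the term $w_{n+1}-w_n$ by replacing $w_n$ with $v_L^*$. That gives a self-referential inequality in $\delta$, which, as you note, loses a factor $(1-\gamma_h)^{-1}$ on $e_n$. Your follow-up claim that the sharp bound ``follows'' from the resolvent of $\mathcal A^{\pi_{n+1}}$ applied to $w_{n+1}-v_L^*$ is not an argument. Any $e_n$-dependent term that enters as a source, rather than through a single application of $T_{n+1}$, is multiplied by $\mu/\lambda=(1-\gamma_h)^{-1}$, which is exactly the loss you wanted to avoid. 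Your last paragraph then leaves the induction undone.

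The missing step is one you had already prepared in your first paragraph. There you showed $\mathcal L^{\pi_{n+1}}_{\eps,h}w_n\ge-g_n$ on all of $\R^d$. Since $\mathcal A^{\pi_{n+1}}(w_{n+1}-w_n)=-\mathcal L^{\pi_{n+1}}_{\eps,h}w_n\le g_n$, Lemma~\ref{pc:lem:comparison} gives $w_{n+1}-w_n\le g_n/\lambda$. Equivalently, $w_n+g_n/\lambda$ is a supersolution of $T_{n+1}$, because $\mu(1-\gamma_h)=\lambda$. Because $P^{\pi_{n+1}}$ is an average, the term you avoided is at most $\gamma_hg_n/\lambda$, so
\[
w_{n+1}-v_L^*\le\gamma_he_n+g_n\Bigl(\frac{\gamma_h}{\lambda}+\frac1\mu\Bigr)=\gamma_he_n+\frac{g_n}{\lambda},
\]
using $1/\mu=(1-\gamma_h)/\lambda$. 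Together with $w_{n+1}\ge v_L^*$ (since $\pi_{n+1}\in\Pi_L$), this is the claim.

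This is precisely the paper's proof. It first gets $w_{n+1}\le w_n+g_n/\lambda$ by the supersolution argument, then applies $\mathcal T^{\pi_{n+1}}$ once more and uses the $\gamma_h$-contraction of the localized Bellman map. The point is that $e_n$ passes through only one application of the contraction, while only $g_n$ passes through the full resolvent.

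Your proposed induction also closes in one line, and is the same argument unrolled. Put $s_k=\sum_{j<k}\gamma_h^j$. Iterating $T_{n+1}w_n\le w_n+g_n/\mu$ gives $T_{n+1}^kw_n\le w_n+s_kg_n/\mu$. Hence
\[
T_{n+1}^{k+1}w_n\le T_{n+1}w_n+\gamma_hs_kg_n/\mu\le v_L^*+\gamma_he_n+s_{k+1}g_n/\mu,
\]
and you let $k\to\infty$.

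Also fix a sign in your first paragraph. Because $H$ is a supremum, $\mathcal F_{\eps,h}[w_n]\ge\mathcal L^{\pi_n}_{\eps,h}w_n=0$ (this is $\Gamma_{\eps,h}\ge0$). The inequality you wrote points the wrong way, and only the correct direction yields $\mathcal L^{\pi_{n+1}}_{\eps,h}w_n\ge-\Gamma_{\eps,h}(w_n,\pi_{n+1})$.
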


\input{policy_iteration_proof}

\begin{corollary}
\label{cor:assumption_free_total_error}
For every $n\ge1$,
\begin{equation}\label{eq:api_history_bound}
    e_n
    \le
    \gamma_h^n\,e_0
    +\frac1\lambda\sum_{j=0}^{n-1}\gamma_h^{\,n-1-j}
    \Bigl[
        \mathfrak G^{\infty}_{\varepsilon,h,L}(v_{\theta_j},\pi_{j+1})
        +\frac{2\sqrt d\,M_b}{h}\,\xi_j^{\rm val}
    \Bigr],
\end{equation}
and under exact hard improvement the computational gaps
$\mathfrak G^{\infty}_{\varepsilon,h,L}(v_{\theta_j},\pi_{j+1})$
vanish.
\end{corollary}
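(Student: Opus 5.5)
The plan is to derive \eqref{eq:api_history_bound} from Proposition~\ref{prop:assumption_free_api} in two steps. First, bound the true greedy gap $g_j=\sup_{Q_L^h}\Gamma_{\varepsilon,h}(w_j,\pi_{j+1})$, taken at the exact policy value $w_j=S_{\varepsilon,h}(\pi_j)$, by the computational gap at the neural value $v_{\theta_j}$ plus a perturbation term. Second, unroll the recursion \eqref{eq:assumption_free_recursion}. I take $\mathfrak G^{\infty}_{\varepsilon,h,L}(v,\pi)=\sup_{Q_L^h}\Gamma_{\varepsilon,h}(v,\pi)$. I take $\xi_j^{\rm val}$ to be the evaluation error bound supplied by Corollary~\ref{cor:closed_certificate}, namely $\sup_{Q_L}|v_{\theta_j}-w_j|\le\xi_j^{\rm val}$. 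This covers every stencil point $x\pm he_i$ with $x\in Q_L^h$.

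\emph{Step 1 (gap perturbation).} At each $x\in Q_L^h$, write $a=\pi_{j+1}(x)$. The state-only terms $\lambda v$, $\nu_h\Delta_hv$ and $p_\Omega/\varepsilon$ cancel in $\mathcal F_{\varepsilon,h}[v]-\mathcal L^{\pi}_{\varepsilon,h}v$, leaving
\[
\Gamma_{\varepsilon,h}(v,\pi_{j+1})(x)=H(x,\nabla_hv(x))+b(x,a)\cdot\nabla_hv(x)+f(x,a).
\]
The map $p\mapsto H(x,p)$ is a supremum of affine functions with slopes $-b(x,a')$, so it is $M_b$-Lipschitz. The term $b(x,a)\cdot p$ is also $M_b$-Lipschitz. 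Hence
\[
\Gamma_{\varepsilon,h}(w_j,\pi_{j+1})(x)\le\Gamma_{\varepsilon,h}(v_{\theta_j},\pi_{j+1})(x)+2M_b\,|\nabla_h(w_j-v_{\theta_j})(x)|.
\]
Each component of the centered difference of $z=w_j-v_{\theta_j}$ is bounded by $2\xi_j^{\rm val}/(2h)$, so $|\nabla_hz(x)|\le\sqrt d\,\xi_j^{\rm val}/h$. Taking the supremum over $Q_L^h$ gives
\[
g_j\le\mathfrak G^{\infty}_{\varepsilon,h,L}(v_{\theta_j},\pi_{j+1})+\frac{2\sqrt d M_b}{h}\,\xi_j^{\rm val}.
\]

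\emph{Step 2 (unrolling).} Substituting this bound into \eqref{eq:assumption_free_recursion} gives
\[
e_{j+1}\le\gamma_he_j+\lambda^{-1}\bigl[\mathfrak G^{\infty}_{\varepsilon,h,L}(v_{\theta_j},\pi_{j+1})+2\sqrt dM_b\xi_j^{\rm val}/h\bigr].
\]
A straightforward induction on $n$ then yields \eqref{eq:api_history_bound}. For exact hard improvement, $\pi_{j+1}(x)$ attains the minimum in \eqref{eq:pinn_hard_greedy_improvement}. This gives $H(x,\nabla_hv_{\theta_j})=-b(x,\pi_{j+1})\cdot\nabla_hv_{\theta_j}-f(x,\pi_{j+1})$ on $Q_L^h$, so $\Gamma_{\varepsilon,h}(v_{\theta_j},\pi_{j+1})=0$ there and the computational gaps vanish.

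The main obstacle I expect is the bookkeeping in Step 1. The difference bound must cover every stencil neighbor of $Q_L^h$, which is exactly why the evaluation certificate is required on all of $Q_L$ (via the larger box $Q_{L'}$) and not just on $Q_L^h$. Outside $Q_L^h$ no gap enters, because $\pi_{j+1}=\bar\pi$ there and $g_j$ is defined as a supremum over $Q_L^h$ only.
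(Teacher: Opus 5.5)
Your proposal is correct and follows the paper's argument: the paper also iterates the recursion \eqref{eq:assumption_free_recursion} and bounds each $g_j$ by the greedy-gap transfer estimate \eqref{eq:greedy_gap_transfer_applied} of Lemma~\ref{lem:greedy_gap_transfer}, whose proof is your Step~1, including the observation that every stencil neighbor of a point of $Q_L^h$ lies in $Q_L$. One small correction that does not affect the argument: $\xi_j^{\rm val}$ is defined directly as $\|v_{\theta_j}-w_j\|_{\infty,Q_L}$, and Corollary~\ref{cor:closed_certificate} only supplies an upper bound for it.
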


\begin{proof}
Iterate \eqref{eq:assumption_free_recursion} and bound $g_j$ by the
greedy-gap transfer estimate \eqref{eq:greedy_gap_transfer_applied}.
\end{proof}

\begin{remark}
By Lemma~\ref{pc:lem:no_exterior_maximum}, the $L^\infty(\mathbb R^d)$
and $L^\infty(Q_L)$ norms of $w_n-v^*_L$ coincide, so
\eqref{eq:assumption_free_recursion}--\eqref{eq:api_history_bound} may be
read in either norm.
\end{remark}

\begin{remark}
\label{rem:accuracy_requirement}
Since $(1-\gamma_h)^{-1}=O(h^{-1})$, \eqref{eq:api_history_bound}
accumulates evaluation and improvement errors as
$O(h^{-2}\sup_j\xi_j^{\rm val})$ and
$O(h^{-1}\sup_j\mathfrak G^{\infty}_{\varepsilon,h,L}
(v_{\theta_j},\pi_{j+1}))$, respectively. Uniform accuracies $o(h^2)$
and $o(h)$ therefore suffice under refinement. These worst-case sufficient
conditions link the mesh size to the evaluation accuracy needed by the
refinement theorem.
These are conditions on evaluation accuracy; convergence of the neural
optimizer is not assumed or proved.
\end{remark}

The residual-to-value statements above require a validated supremum.
A sufficient conversion from $L^2$ residuals, including its dimension-dependent
exponent and regularity requirements, is supplied in Appendix~\ref{supp:l2_lifting}.

\subsection{Total solution error}
\label{subsec:total_solution_error}

For compact $K\subset\overline\Omega$, define the penalty error
\begin{equation}\label{eq:penalty_error_K}
    \mathfrak P_K(\varepsilon)
    :=
    \|u-u^\varepsilon\|_{L^\infty(K)}.
\end{equation}
By the penalty approximation,
\[
    \mathfrak P_K(\varepsilon)\to0
    \qquad\text{as }\varepsilon\downarrow0.
\]
Define the discrete approximation error
\begin{equation}\label{eq:semidiscrete_error_K}
    \mathfrak D_K(\varepsilon,h,\nu_h)
    :=
    \|u^\varepsilon-v_{\varepsilon,h}\|_{L^\infty(K)}.
\end{equation}
This term contains the consistency error, the artificial-viscosity error, and
the interaction between the finite-difference scale and the penalized
boundary-layer structure.

\begin{remark}
\label{rem:semidiscrete_status}
For each fixed $\varepsilon>0$, the scheme
\eqref{eq:semidiscrete_bellman_operator} is monotone (under
\eqref{eq:monotonicity_condition_N}), uniformly stable, and consistent with
the penalized equation \eqref{eq:HJB_eps}, so
$v_{\varepsilon,h}\to u^\varepsilon$ locally uniformly as $h\downarrow0$
by the Barles--Souganidis framework \cite{BarlesSouganidis1991}.
The stability bound is $M_\varepsilon$, independent of $h$; whole-space
comparison for \eqref{eq:HJB_eps} identifies both half-relaxed limits
with $u^\varepsilon$. Thus
$\mathfrak D_K(\varepsilon,h,\nu_h)\to0$ for each fixed $\varepsilon$,
and condition (R5) of
Section~\ref{subsec:convergence_under_refinement} can be met by a diagonal
choice of $(\varepsilon_k,h_k)$. We do not quantify the rate; in
particular, the structural condition $h\le\varepsilon$ alone does not
imply the smallness of $\mathfrak D_K$.
\end{remark}

\begin{theorem}[Total solution error]
\label{thm:local_solution_error_mismatch}
Let $K\subset\overline\Omega\Subset Q_L$ be compact and assume the
hypotheses of Theorem~\ref{pc:thm:localization}. For localized policies
$\pi_j\in\Pi_L$ and their bounded neural evaluations, every $n\ge1$ satisfies
\begin{align}
    \|u-v_{\theta_n}\|_{L^\infty(K)}
    \le\;&
    \mathfrak P_K(\varepsilon)
    +\mathfrak D_K(\varepsilon,h,\nu_h)
    +h\Bigl(\frac{\|f\|_\infty}{\lambda}+C_*\Bigr)
    +\gamma_h^n\,\|w_0-v^*_L\|_{L^\infty(\mathbb R^d)}
    \notag\\
    &+\frac1\lambda
    \sum_{j=0}^{n-1}\gamma_h^{\,n-1-j}
    \Bigl[
        \mathfrak G^{\infty}_{\varepsilon,h,L}(v_{\theta_j},\pi_{j+1})
        +\frac{2\sqrt d\,M_b}{h}\,\xi_j^{\rm val}
    \Bigr]+\xi_n^{\rm val}.
    \label{eq:local_solution_error_mismatch}
\end{align}
Under exact hard improvement the computational gaps vanish. The estimate
requires no continuity or uniqueness of the greedy selectors.
\end{theorem}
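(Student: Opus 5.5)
The bound \eqref{eq:local_solution_error_mismatch} should follow from a telescoping triangle inequality along the chain
$u\leftarrow u^\eps\leftarrow v_{\eps,h}\leftarrow v_L^*\leftarrow w_n\leftarrow v_{\theta_n}$.
For $x\in K$ we write
\[
 |u-v_{\theta_n}|\le|u-u^\eps|+|u^\eps-v_{\eps,h}|+|v_{\eps,h}-v_L^*|
 +|v_L^*-w_n|+|w_n-v_{\theta_n}|,
\]
with $w_n=S_{\eps,h}(\pi_n)$, and then bound each term separately.

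The first two terms are, by definition, at most $\mathfrak P_K(\eps)$ and $\mathfrak D_K(\eps,h,\nu_h)$. These are the quantities \eqref{eq:penalty_error_K} and \eqref{eq:semidiscrete_error_K}, and no further work is needed for them.

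The third term is handled by Theorem~\ref{pc:thm:localization}. Its hypotheses are assumed, so on $K$ we get $0\le v_L^*-v_{\eps,h}\le h(\|f\|_\infty/\lambda+C_*)$.

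The fourth term is at most $e_n=\|w_n-v_L^*\|_{L^\infty(\R^d)}$. For $n\ge1$ we invoke Corollary~\ref{cor:assumption_free_total_error} directly. That corollary iterates Proposition~\ref{prop:assumption_free_api} and bounds each exact gap $g_j$ by the computational gap plus $2\sqrt d\,M_b\,\xi_j^{\rm val}/h$ through the greedy-gap transfer estimate. The result is exactly the term $\gamma_h^n\|w_0-v_L^*\|$ plus the weighted sum in \eqref{eq:local_solution_error_mismatch}.

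This step needs two checks:
\begin{itemize}
\item all $\pi_j$ lie in $\Pi_L$, which is assumed;
\item the corollary's $e_0$ is the same quantity as $\|w_0-v_L^*\|_{L^\infty(\R^d)}$.
\end{itemize}
The remark after the corollary, based on Lemma~\ref{pc:lem:no_exterior_maximum}, lets us read the norm on $\R^d$ or on $Q_L$, so the second check is consistent.

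The last term is bounded by $\xi_n^{\rm val}$, the evaluation error of $v_{\theta_n}$ relative to $S_{\eps,h}(\pi_n)$. This is supplied by the residual certificate of Corollary~\ref{cor:closed_certificate} on $Q_L\supset K$. Since $K\subset\overline\Omega\Subset Q_L$, the supremum over $K$ is dominated by the supremum on $Q_L$.

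The main obstacle is bookkeeping rather than new analysis. One must ensure that $\xi_j^{\rm val}$ is defined as a supremum over $Q_L$, or at least over the stencil neighbourhood of $Q_L^h$, because the gap transfer uses $\nabla_h$ of the evaluation error. One must also check that the greedy gaps used in Proposition~\ref{prop:assumption_free_api} are taken on $Q_L^h$, while the common exterior policy $\bar\pi$ contributes no gap there.

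The last sentence of the theorem follows as well. Under exact hard improvement $\mathfrak G^\infty=0$ by construction. No continuity or uniqueness of the selectors enters anywhere: the recursion compares each policy value with $v_L^*$ directly and uses only measurability of the $\pi_j$.
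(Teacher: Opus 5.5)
Your proposal is correct and matches the paper's proof: the same five-term telescoping split, with the penalty and discretization terms bounded by definition, the localization term by Theorem~\ref{pc:thm:localization}, the policy term by Corollary~\ref{cor:assumption_free_total_error}, and the evaluation term by $\xi_n^{\rm val}$ on $Q_L\supset K$. One correction: the bound on the last term follows directly from the definition of $\xi_n^{\rm val}$ as a supremum over $Q_L$, so you do not need Corollary~\ref{cor:closed_certificate} here; it only serves to bound $\xi_n^{\rm val}$ itself by validated residuals.
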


\begin{proof}
Split
\[
    u-v_{\theta_n}
    =(u-u^\varepsilon)+(u^\varepsilon-v_{\varepsilon,h})
    +(v_{\varepsilon,h}-v^*_L)+(v^*_L-w_n)+(w_n-v_{\theta_n}).
\]
Taking norms, the first two terms give $\mathfrak P_K(\varepsilon)$ and
$\mathfrak D_K(\varepsilon,h,\nu_h)$; the third is bounded on $K$ by
Theorem~\ref{pc:thm:localization}; the fourth by
Corollary~\ref{cor:assumption_free_total_error}; the fifth by
$\xi_n^{\rm val}$ on $Q_L\supset K$.
\end{proof}

\subsection{Convergence under refinement}
\label{subsec:convergence_under_refinement}

For a refinement sequence, let $Q_{L_k}$ be the improvement boxes and let
$v_{\theta_{k,n_k}}$ denote the network obtained by evaluating
$\pi_{k,0},\ldots,\pi_{k,n_k}$, a total of $n_k+1$ evaluations.
Thus $n_k=N_{{\rm PI},k}-1$ in Algorithm~\ref{alg:pinn_pi_sc}.
Write $w_{k,j}=S_{\varepsilon_k,h_k}(\pi_{k,j})$ and
$\xi_{k,j}^{\rm val}=\|v_{\theta_{k,j}}-w_{k,j}\|_{\infty,Q_{L_k}}$;
$v^*_{L_k}$ is understood at $(\varepsilon_k,h_k)$.
Fix a compact target $K\subset\overline\Omega$.

\begin{corollary}
\label{cor:local_convergence_from_mismatch_control}
Let $\varepsilon_k\downarrow0$, $h_k\downarrow0$, $L_k\to\infty$,
$n_k\to\infty$, with $N$, $M_p$, and $\delta>0$ fixed.
Assume the standing hypotheses of this section, $\pi_{k,j}\in\Pi_{L_k}$,
and the margin geometry \eqref{pc:eq:margin} for each sufficiently large $k$.
Suppose also that:
\begin{enumerate}[label=(R\arabic*),leftmargin=3em]
\item $h_k\le\min\{h_0,\varepsilon_k\}$ and $\lambda h_k\le2dN$
      \hfill(structural regime; Remark~\ref{pc:rem:leakage_sharp});
\item $\displaystyle
      D_{L_k}\ \ge\ \frac{4dN}{\lambda}
      \log\Bigl(\frac{24\,C_{\rm ext}}{\delta\,h_k^2}\Bigr)$
      \hfill(margin \eqref{pc:eq:margin_condition});
\item $\gamma_{h_k}^{\,n_k}\,
      \|w_{k,0}-v^*_{L_k}\|_{L^\infty(\mathbb R^d)}\to0$,
      for which $\gamma_{h_k}^{\,n_k}/\varepsilon_k\to0$ is sufficient
      \hfill(initial-error decay);
\item $\displaystyle
      \frac{1}{h_k^{2}}\sup_{0\le j\le n_k}\xi_{k,j}^{\rm val}\to0$
      and
      $\displaystyle
      \frac{1}{h_k}\sup_{0\le j<n_k}
      \mathfrak G^{\infty}_{\varepsilon_k,h_k,L_k}
      (v_{\theta_{k,j}},\pi_{k,j+1})\to0$
      \hfill(evaluation and improvement accuracy);
\item $\mathfrak P_K(\varepsilon_k)\to0$ and
      $\mathfrak D_K(\varepsilon_k,h_k,\nu_{h_k})\to0$
      \hfill(penalization and discretization).
\end{enumerate}
Then
\[
    \|u-v_{\theta_{k,n_k}}\|_{L^\infty(K)}\to0
    \qquad\text{and}\qquad
    \|u-v_{\theta_{k,n_k}}\|_{L^2(K)}\to0 .
\]
\end{corollary}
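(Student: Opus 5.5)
The plan is to apply Theorem~\ref{thm:local_solution_error_mismatch} at each level $k$ with $(\varepsilon,h,L,n)=(\varepsilon_k,h_k,L_k,n_k)$, and then show that every term on the right of \eqref{eq:local_solution_error_mismatch} tends to zero. For large $k$, (R1) and (R2) give the hypotheses of Theorem~\ref{pc:thm:localization}. Condition (R2) is exactly \eqref{pc:eq:margin_condition} rewritten, as in Remark~\ref{pc:rem:margin_reading}. The auxiliary requirements $D_{L_k}^{h_k}-2h_k>0$ and $\exp(\lambda(D^{h_k}_{L_k}-2h_k)/(4dN))\ge2$ also hold eventually, since $D_{L_k}\to\infty$ by (R2) as $h_k\to0$, and $D_L^h\ge D_L$ up to $O(h)$ by the margin geometry. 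We may also assume $\varepsilon_k\le1$. The bound \eqref{eq:local_solution_error_mismatch} therefore holds for all large $k$.

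The terms are handled in order.
\begin{itemize}
\item $\mathfrak P_K$ and $\mathfrak D_K$ vanish by (R5).
\item The localization term $h_k(\|f\|_\infty/\lambda+C_*)$ vanishes because $C_*$ is independent of $h,\varepsilon$ (Lemma~\ref{pc:lem:leakage}).
\item The initial-error term vanishes by (R3).
\item The last term satisfies $\xi^{\rm val}_{k,n_k}\le h_k^2\cdot o(1)\to0$ by (R4).
\end{itemize}

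For the sufficiency claim in (R3), I would bound $\|w_{k,0}-v^*_{L_k}\|_\infty\le 2M_{\varepsilon_k}$ using the representation \eqref{pc:eq:representation} and $f\ge0$. Since $M_\varepsilon=(\|f\|_\infty+M_p/\varepsilon)/\lambda\le C/\varepsilon$ for $\varepsilon\le1$, this gives $\gamma^{n_k}_{h_k}/\varepsilon_k\to0$ as a sufficient condition.

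The remaining and main step is the accumulated sum. Bounding each bracket by its supremum over $j$ and summing the geometric series gives $\sum_{j}\gamma_h^{n-1-j}\le(1-\gamma_h)^{-1}=1+2dN/(\lambda h)\le C/h$, using $\lambda h\le 2dN$. The sum is therefore at most
\[
\frac{C}{\lambda h_k}\Bigl[\sup_j\mathfrak G^\infty_{\varepsilon_k,h_k,L_k}(v_{\theta_{k,j}},\pi_{k,j+1})+\frac{2\sqrt d M_b}{h_k}\sup_j\xi^{\rm val}_{k,j}\Bigr],
\]
which tends to zero by (R4), matching Remark~\ref{rem:accuracy_requirement}. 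The indices must be checked: the sum runs over $j\le n_k-1$, while (R4) covers $j<n_k$ for gaps and $j\le n_k$ for values, so both are within range. This gives uniform convergence on $K$.

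Finally, $L^2(K)$ convergence follows from $\|\cdot\|_{L^2(K)}\le|K|^{1/2}\|\cdot\|_{L^\infty(K)}$, since $K$ is compact and so $|K|<\infty$. The delicate point is the applicability of Theorem~\ref{pc:thm:localization} uniformly in $k$, that is, the eventual validity of the $D^h_L$ side conditions. I would settle this by noting that $D_L$ and $D_L^h$ differ by a bounded amount, so both diverge.
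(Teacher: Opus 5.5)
Your proposal is correct and follows the paper's proof essentially step for step: you apply Theorem~\ref{thm:local_solution_error_mismatch}, verify the reduced-box side conditions of Theorem~\ref{pc:thm:localization} through $D_{L_k}\to\infty$, bound the initial error by $O(\varepsilon_k^{-1})$, sum the geometric weights using $(1-\gamma_{h_k})^{-1}=O(h_k^{-1})$, and pass to $L^2$ via $|K|^{1/2}$. One point to tighten: you only need the one-sided inequality $D_L^h\ge D_L$, which is established in the proof of Theorem~\ref{pc:thm:localization}; a two-sided bounded difference is not needed, and it need not hold in general, since $D_L$ is any admissible margin parameter.
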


\begin{proof}
By (R2), $D_{L_k}\to\infty$ and $D_{L_k}^{h_k}\ge D_{L_k}$, so the
additional reduced-box conditions of Theorem~\ref{pc:thm:localization}
hold for large $k$; also $\varepsilon_k\le1$ eventually.
Insert (R1)--(R5) into \eqref{eq:local_solution_error_mismatch}. The
localization term is $O(h_k)$. For (R3), the global value bound of
Lemma~\ref{pc:lem:representation} gives
\[
    \|w_{k,0}-v^*_{L_k}\|_{L^\infty(\mathbb R^d)}
    \le
    \frac{2\,(\|f\|_\infty+M_p/\varepsilon_k)}{\lambda}
    =O(\varepsilon_k^{-1}).
\]
Thus $\gamma_{h_k}^{n_k}/\varepsilon_k\to0$ suffices, and
$M_p$ is held fixed along the sequence. The weighted sum is
$O\bigl(h_k^{-2}\sup_{0\le j\le n_k}\xi^{\rm val}_{k,j}
+h_k^{-1}\sup_{0\le j<n_k}\mathfrak G^{\infty}_{\varepsilon_k,h_k,L_k}
(v_{\theta_{k,j}},\pi_{k,j+1})\bigr)\to0$ by (R4), using
$(1-\gamma_{h_k})^{-1}=O(h_k^{-1})$, and the final evaluation term
$\xi_{k,n_k}^{\rm val}\to0$ is covered by (R4), whose supremum extends to
$j=n_k$. The $L^2$ statement follows from
$\|w\|_{L^2(K)}\le|K|^{1/2}\|w\|_{L^\infty(K)}$.
\end{proof}

The nested-box version of this refinement statement and its conditional
$L^2$-residual formulation are given in Appendix~\ref{supp:refinement_certificates}.

\input{squared_penalty}

\input{final_residual}

\input{experiments}


\FloatBarrier
\bigskip
\appendix
\section*{Appendices}
\addcontentsline{toc}{section}{Appendices}
The appendices give auxiliary cost and residual results, conditional
$L^2$ formulations, the complete numerical protocol, and additional
diagnostics. The analytical statements use the bounded-data, monotonicity,
and pointwise-norm conventions of the main text; geometric hypotheses
are invoked where needed.

\input{appendices}

\FloatBarrier
\input{declarations}

\bibliographystyle{plainurl}
\phantomsection
\addcontentsline{toc}{section}{References}
{\small
\bibliography{ref}
}
\end{document}

%% file: markov_tools.tex
\begin{lemma}[Discounted representation]\label{pc:lem:representation}
Under \eqref{eq:monotonicity_condition_N}, every bounded measurable policy
$\pi$ has a unique bounded value given by \eqref{pc:eq:representation},
with $0\le S_{\eps,h}(\pi)\le M_\eps$.
\end{lemma}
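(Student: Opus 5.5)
The plan is to show that the fixed-point map of \eqref{pc:eq:fixed_point_form}, $T^\pi v:=c_\pi/\mu+\gamma_hP^\pi v$, is a contraction on the space of bounded (pointwise-supremum normed) functions on $\R^d$. Its unique fixed point is then the unique bounded zero of $\mathcal L^\pi_{\eps,h}$, and the Neumann series gives \eqref{pc:eq:representation}.

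The first step is to check that $\mathcal L^\pi_{\eps,h}v=0$ is equivalent to $v=T^\pi v$. Multiply out $\mathcal A^\pi v$. The coefficient of $v(x)$ is $\lambda+2d\nu_h/h^2=\lambda+2dN/h=\lambda+\Lambda_h=\mu$. The coefficient of $v(x\pm he_i)$ is
\[
 \frac{\nu_h}{h^2}\pm\frac{b_i(x,\pi(x))}{2h}=\frac{N\pm b_i/2}{h}=\Lambda_h\,q_i^\pm(x).
\]
Hence $\mathcal A^\pi v=\mu v-\Lambda_hP^\pi v$, and dividing by $\mu$ gives the equivalence. Under \eqref{eq:monotonicity_condition_N} the $q_i^\pm$ are nonnegative and sum to one, so $P^\pi$ is a Markov operator. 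Since $\pi$ and $b$ are measurable, it maps bounded Borel functions to bounded Borel functions, and it satisfies $\|P^\pi v\|_\infty\le\|v\|_\infty$ in the pointwise supremum norm. Because $\gamma_h<1$, $T^\pi$ is a $\gamma_h$-contraction.

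Existence and uniqueness of a bounded fixed point then follow from Banach's theorem. Iterating from $0$ gives
\[
 S_{\eps,h}(\pi)=\sum_{m\ge0}\gamma_h^m(P^\pi)^m c_\pi/\mu .
\]
Identifying $(P^\pi)^m c_\pi(x)=\E_x^\pi c_\pi(Y_m)$ through the Markov property of the chain $Y$ yields \eqref{pc:eq:representation}; the interchange of sum and expectation is justified by $c_\pi\ge0$ and monotone convergence, or by boundedness.

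The bounds come from $0\le c_\pi\le\|f\|_\infty+M_p/\eps$, using the normalization $f\ge0$ and $0\le p_\Omega\le M_p$. This gives
\[
 0\le S_{\eps,h}(\pi)\le\frac{\|f\|_\infty+M_p/\eps}{\mu(1-\gamma_h)}=M_\eps,
\]
since $\mu(1-\gamma_h)=\lambda$. The main point of care, rather than a real obstacle, is measurability and the requirement that uniqueness hold among bounded functions in the pointwise sense, not merely almost everywhere. The operator couples only lattice cosets $x+h\mathbb Z^d$, so the contraction argument must be carried out in the space of all bounded functions with the pointwise sup norm, which is exactly the convention fixed in the paper.
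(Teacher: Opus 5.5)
Your proposal is correct and is essentially the paper's argument. The paper iterates \eqref{pc:eq:fixed_point_form} to show that any bounded solution equals the series, since the remainder $\gamma_h^M\E_x^\pi v(Y_M)$ vanishes, and it checks existence by first-jump conditioning with the same bound $\|c_\pi\|_\infty/[\mu(1-\gamma_h)]=\|c_\pi\|_\infty/\lambda$; your $\gamma_h$-contraction and Neumann-series argument is this reasoning packaged through Banach's theorem (one cosmetic slip: in $\mathcal A^\pi v$ the neighbor coefficients are $-\Lambda_h q_i^\pm(x)$, as your identity $\mathcal A^\pi v=\mu v-\Lambda_hP^\pi v$ correctly records).
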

\begin{proof}
Iteration of \eqref{pc:eq:fixed_point_form} gives
$v(x)=\mu^{-1}\E_x^\pi\sum_{m<M}\gamma_h^m c_\pi(Y_m)
+\gamma_h^M\E_x^\pi v(Y_M)$.
For bounded $v$ the remainder tends to zero. Conversely, the infinite
series is bounded by $\|c_\pi\|_\infty/[\mu(1-\gamma_h)]
=\|c_\pi\|_\infty/\lambda$ and satisfies the fixed-point equation
by conditioning on the first jump.
\end{proof}

\begin{lemma}[Comparison]\label{pc:lem:comparison}
If $z$ is bounded and $\mathcal A^\pi z\le g$ pointwise on $\R^d$,
where $\sup g<\infty$, then $\sup z\le\lambda^{-1}\sup g$.
\end{lemma}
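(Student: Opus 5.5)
The statement to prove is the Comparison lemma: if $z$ is bounded and $\mathcal A^\pi z\le g$ on $\R^d$, then $\sup z\le\lambda^{-1}\sup g$. The plan is to rewrite the differential inequality as a sub-fixed-point inequality for the discounted jump kernel, and then iterate it exactly as in the proof of Lemma~\ref{pc:lem:representation}.

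First, expand $\mathcal A^\pi z=\lambda z-b(\cdot,\pi)\cdot\nabla_h z-\nu_h\Delta_h z$ at a point $x$. With $\nu_h=Nh$, the coefficient of $z(x)$ is $\lambda+2dNh/h^2\cdot h=\lambda+\Lambda_h=\mu$. The coefficient of $z(x\pm he_i)$ is
\[
-\Bigl(\frac{N}{h}\pm\frac{b_i(x,\pi(x))}{2h}\Bigr)=-\Lambda_h\,q_i^\pm(x).
\]
Hence $\mathcal A^\pi z=\mu z-\Lambda_hP^\pi z$, and the hypothesis becomes
\[
z\le g/\mu+\gamma_hP^\pi z\qquad\text{pointwise on }\R^d.
\]
By \eqref{eq:monotonicity_condition_N}, $P^\pi$ is a positive operator with $P^\pi1=1$. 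This is the only place monotonicity enters.

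The argument then has two equivalent routes.

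\emph{Direct supremum route.} Set $S=\sup z<\infty$ and $G=\sup g$. Taking suprema gives $S\le G/\mu+\gamma_hS$, that is, $S(1-\gamma_h)\le G/\mu$. Since $\mu(1-\gamma_h)=\lambda$, this yields $S\le G/\lambda$. Finiteness of $S$, which comes from boundedness of $z$, is essential here, because subtracting $\gamma_hS$ requires $S<\infty$. Note also that if $G=-\infty$ the inequality still holds, since then $S$ would have to be $-\infty$, which is impossible for bounded $z$. So the case $G$ finite is all that matters.

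\emph{Iteration route, as a consistency check.} Iterating $M$ times with positivity of $P^\pi$ gives
\[
z(x)\le\mu^{-1}\sum_{m<M}\gamma_h^m\,\E_x^\pi g(Y_m)+\gamma_h^M\,\E_x^\pi z(Y_M).
\]
The remainder is at most $\gamma_h^M\|z\|_\infty\to0$. Bounding $g\le G$ termwise then gives $z(x)\le G/[\mu(1-\gamma_h)]=G/\lambda$. One could object that $g$ might be unbounded below, so the expectations could be $-\infty$; but the inequalities stay valid in the extended sense, and the pointwise bound by $G$ is all that is used.

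There is no serious obstacle in this proof. The only care points are verifying the coefficient identity that makes $q_i^\pm$ nonnegative, and using $\sup z<\infty$ before subtracting. Measurability of $\pi$ plays no role in the supremum argument.
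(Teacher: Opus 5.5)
Your direct supremum route is correct and is essentially the paper's argument. The paper evaluates $\mathcal A^\pi z\le g$ at a near-maximizer $x_\eta$ and uses $P^\pi z(x_\eta)-z(x_\eta)\le\eta$ to get $\lambda(\sup z-\eta)\le\sup g+\Lambda_h\eta$, whereas you take the supremum of $\mu z\le g+\Lambda_hP^\pi z$ directly, which avoids $\eta$ but relies on the same two facts: positivity of $P^\pi$ with $P^\pi1=1$, and $\sup z<\infty$. The iteration route is fine but unnecessary, and there is a stray ``$\cdot h$'' in your coefficient computation; the diagonal coefficient is $\lambda+2dN/h=\mu$.
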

\begin{proof}
Choose $x_\eta$ with $z(x_\eta)\ge\sup z-\eta$. Positivity of
$P^\pi$ gives $P^\pi z(x_\eta)-z(x_\eta)\le\eta$. Hence
$\lambda(\sup z-\eta)\le\lambda z(x_\eta)
\le\sup g+\Lambda_h\eta$. Let $\eta\downarrow0$.
\end{proof}
Applying comparison to $\pm z$ also yields
$\|z\|_\infty\le\lambda^{-1}\|\mathcal A^\pi z\|_\infty$.
These are the stationary discounted versions of the Markov and comparison
tools in \cite[Theorem~4.1]{KimKimChoKim2026}.
For the constant feedback $\pi\equiv a$, write $c_a$ and $P^a$.
The Bellman map obtained by minimizing $c_a/\mu+\gamma_hP^a v$
over $a\in A$ is also a monotone $\gamma_h$-contraction on bounded
Borel functions. Its unique fixed point lies in $[0,M_\eps]$.
A measurable greedy selector realizes the minimum, so this fixed point
is its policy value; comparison with every frozen-policy map shows that
it is the infimum of all policy values. The same argument applies when
the admissible control is fixed to $\bar\pi$ outside an improvement region.

%% file: leakage_proof.tex
\begin{proof}
By the discounted representation, $w=w_{\bar\pi}$ solves
$\mathcal A^{\bar\pi}w=p_\Omega$ and $0\le w\le M_p/\lambda$.
We construct a bounded supersolution of size $C_*h$ on $\overline\Omega$.

\emph{Barrier and active region.}
Put $C_5=5d(M_b+2N)$ and choose
\[
 C_*:=\max\{1,8C_5/(\lambda\nu_0)\},\qquad
 a=C_*h,\qquad \kappa=2(\nu_0a)^{-1/2}.
\]
Thus $\nu_0\kappa^2a=4$ and $C_5\kappa^2h\le\lambda/2$.
Define the $C^{1,1}$ profile
\[
 \psi(s)=\begin{cases}
 s,&s\ge-1/\kappa,\\
 -3/(2\kappa)+(\kappa/2)(s+2/\kappa)^2,
       &-2/\kappa\le s\le-1/\kappa,\\
 -3/(2\kappa),&s\le-2/\kappa,
 \end{cases}
\]
and set $\Theta=\psi\circ\tilde d$,
$\Phi_{\exp}=ae^{\kappa\Theta}$, and
\[
 \Phi=\min\{\Phi_{\exp},M_p/\lambda+a\}.
\]
We have $0\le\psi'\le1$, $|\psi''|\le\kappa$ a.e.,
$|\nabla\Theta|\le1$, and
$\|D^2\Theta\|_\infty\le\kappa+C_\Omega\le2\kappa$ for small $h$.
On $\overline\Omega$, $\Theta\le0$ and hence $\Phi\le a$.
The exponential branch is active only where
\[
 \tilde d\le s_*:=\kappa^{-1}\log(1+M_p/(\lambda a))
          =O(\sqrt h\log(1/h)).
\]
Choose $h_0\le1$ sufficiently small that $\kappa\ge C_\Omega$,
$2/\kappa\le r_1$, $s_*<r_1/2$, and $e^{2\kappa h}\le2$ for
$h\le h_0$. For active points outside $\overline\Omega$, (V2) then
implies $\dist(x,\overline\Omega)\le2\tilde d(x)\le2s_*<r_1$.
On the active set,
\begin{equation}\label{pc:eq:collar_inclusion}
 \psi'(\tilde d)>0\ \Longrightarrow\
 -2/\kappa<\tilde d\le s_*\ \Longrightarrow\
 b(x,\bar\pi(x))\cdot\nabla\tilde d(x)\le-\nu_0.
\end{equation}

\emph{Discrete supersolution estimate.}
At an active point write
$\Delta_i^\pm=\Theta(x\pm he_i)-\Theta(x)$.
The $C^{1,1}$ bound gives
$|\Delta_i^\pm\mp h\partial_i\Theta|\le\kappa h^2$ and
$|\Delta_i^\pm|\le2h$. Consequently
\[
 e^{\kappa\Delta_i^\pm}=1+\kappa\Delta_i^\pm+R_i^\pm,
 \qquad |R_i^\pm|\le4\kappa^2h^2.
\]
After division by $\Phi_{\exp}(x)$, the centered drift differs from
$-\kappa b_i\partial_i\Theta$ by at most $5M_b\kappa^2h$; the
viscosity term has absolute value at most $10N\kappa^2h$.
Indeed the linear Taylor errors in either the sum or difference of the
two increments are at most $2\kappa^2h^2$, and the two exponential
remainders contribute at most $8\kappa^2h^2$.
Summing over coordinates and using \eqref{pc:eq:collar_inclusion} yields
\begin{equation}\label{pc:eq:barrier_master}
 \mathcal A^{\bar\pi}\Phi_{\exp}
 \ge\Phi_{\exp}\bigl[\lambda+\nu_0\kappa\psi'(\tilde d)
                         -C_5\kappa^2h\bigr]
 \ge\Phi_{\exp}\bigl[\lambda/2+\nu_0\kappa\psi'(\tilde d)\bigr].
\end{equation}
Where $\psi'=0$, the drift contribution vanishes without an inward
condition. If $\tilde d\le0$, (V2) gives $p_\Omega=0$, so
\eqref{pc:eq:barrier_master} suffices. If $\tilde d>0$, then
$\Theta=\tilde d$, $\psi'=1$, and $e^s\ge s$ gives
\[
 \mathcal A^{\bar\pi}\Phi_{\exp}
 \ge a\nu_0\kappa e^{\kappa\tilde d}
 \ge a\nu_0\kappa^2\tilde d
 =4\tilde d\ge2\dist(x,\overline\Omega)\ge p_\Omega.
\]

\emph{Capping and comparison.}
The constant cap satisfies
$\mathcal A^{\bar\pi}(M_p/\lambda+a)=M_p+\lambda a\ge p_\Omega$.
At any point where a branch $\Phi_j$ is active,
$\Phi(x)=\Phi_j(x)$ and all neighboring values of $\Phi$ are at most
those of $\Phi_j$. Nonnegative stencil weights therefore give
$\mathcal A^{\bar\pi}\Phi(x)\ge\mathcal A^{\bar\pi}\Phi_j(x)$,
including ties. This local argument needs no global bound on the
uncapped branch. Thus the bounded function $\Phi$ is a supersolution
everywhere, and comparison gives $w\le\Phi\le C_*h$ on
$\overline\Omega$. Finally,
$S_{\eps,h}(\bar\pi)\le\|f\|_\infty/\lambda+\eps^{-1}w$
proves \eqref{pc:eq:leakage_value}.
\end{proof}

%% file: localized_tools.tex
This is the common-exterior construction of
\cite[Section~5.2]{KimKimChoKim2026}. The following consequences connect
local improvements to global value comparisons.

\begin{lemma}[Localized source]\label{pc:lem:no_exterior_source}
For $\pi,\pi'\in\Pi_L$,
\[
 \|S_{\eps,h}(\pi)-S_{\eps,h}(\pi')\|_\infty
 \le\lambda^{-1}\|\mathcal L^\pi_{\eps,h}S_{\eps,h}(\pi')\|_{\infty,Q_L^h}.
\]
\end{lemma}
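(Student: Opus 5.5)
The statement to prove is Lemma~\ref{pc:lem:no_exterior_source}: for $\pi,\pi'\in\Pi_L$,
\[
 \|S_{\eps,h}(\pi)-S_{\eps,h}(\pi')\|_\infty\le\lambda^{-1}\|\mathcal L^\pi_{\eps,h}S_{\eps,h}(\pi')\|_{\infty,Q_L^h}.
\]
The plan is to apply the comparison estimate of Lemma~\ref{pc:lem:comparison} to the difference $z:=S_{\eps,h}(\pi')-S_{\eps,h}(\pi)$ under the frozen operator $\mathcal A^\pi$. The point to exploit is that both policies coincide with $\bar\pi$ outside $Q_L^h$, so the residual source vanishes there.

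\textbf{Step 1 (boundedness).} By Lemma~\ref{pc:lem:representation}, both $S_{\eps,h}(\pi)$ and $S_{\eps,h}(\pi')$ are bounded, with values in $[0,M_\eps]$. Hence $z$ is bounded.

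\textbf{Step 2 (source identity).} Since $\mathcal A^\pi S_{\eps,h}(\pi)=c_\pi$, linearity gives
\[
\mathcal A^\pi z=\mathcal A^\pi S_{\eps,h}(\pi')-c_\pi=\mathcal L^\pi_{\eps,h}S_{\eps,h}(\pi')=:g .
\]
This identity holds pointwise on $\R^d$. All stencil shifts are whole-space evaluations, so no boundary terms arise.

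\textbf{Step 3 (vanishing exterior source).} Fix $x\notin Q_L^h$. There $\pi(x)=\pi'(x)=\bar\pi(x)$, and the operator $\mathcal A^\pi$ at $x$ depends on the policy only through the value $\pi(x)$. Likewise $c_\pi(x)=c_{\pi'}(x)$. Therefore
\[
g(x)=\mathcal A^{\pi'}S_{\eps,h}(\pi')(x)-c_{\pi'}(x)=0 .
\]
Consequently $\|g\|_\infty=\|g\|_{\infty,Q_L^h}$. This quantity is finite, because $g$ is a finite stencil combination of a bounded function plus the bounded cost.

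\textbf{Step 4 (comparison).} Apply Lemma~\ref{pc:lem:comparison} to $z$ and to $-z$. Equivalently, use the remark following that lemma, which gives $\|z\|_\infty\le\lambda^{-1}\|\mathcal A^\pi z\|_\infty$. Combining this with Step 3 yields the claimed bound.

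The only delicate point is Step 3. It requires that $\mathcal A^\pi$ be a pointwise operator in the policy. This holds here because $b(x,\pi(x))$ enters only at the center point $x$, while the neighbors enter only through values of $S_{\eps,h}(\pi')$, not through the policy. Once this is checked, the rest is immediate from the comparison lemma.
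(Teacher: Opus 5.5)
Your proposal is correct and follows the paper's proof: form the difference of the two policy values, note that $\mathcal A^\pi$ applied to it equals $\pm\mathcal L^\pi_{\eps,h}S_{\eps,h}(\pi')$, that this source vanishes outside $Q_L^h$ because $\pi=\pi'=\bar\pi$ there, and apply Lemma~\ref{pc:lem:comparison} to $\pm z$. The only difference is your sign convention for $z$, which is immaterial.
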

\begin{proof}
For $z=S_{\eps,h}(\pi)-S_{\eps,h}(\pi')$,
$\mathcal A^\pi z=-\mathcal L^\pi_{\eps,h}S_{\eps,h}(\pi')$.
The source vanishes outside $Q_L^h$ because the policies agree there.
Apply Lemma~\ref{pc:lem:comparison} to $\pm z$.
\end{proof}

\begin{lemma}[No exterior maximum]\label{pc:lem:no_exterior_maximum}
For the same $z$, $\|z\|_{\infty,\R^d}=\|z\|_{\infty,Q_L}$.
This also holds for $z=S_{\eps,h}(\pi)-v_L^*$.
\end{lemma}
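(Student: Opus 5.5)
The plan is to use the discounted representation of the frozen equation outside the improvement region, where both policies coincide with $\bar\pi$. Let $\pi,\pi'\in\Pi_L$ and $z=S_{\eps,h}(\pi)-S_{\eps,h}(\pi')$. By the proof of Lemma~\ref{pc:lem:no_exterior_source}, $\mathcal A^\pi z=-\mathcal L^\pi_{\eps,h}S_{\eps,h}(\pi')$. This source vanishes on $\R^d\setminus Q_L^h$, since there $\pi=\pi'=\bar\pi$. In fixed-point form, as in \eqref{pc:eq:fixed_point_form}, this says $z(y)=\gamma_hP^\pi z(y)$ for every $y\notin Q_L^h$. The trivial inequality $\|z\|_{\infty,Q_L}\le\|z\|_{\infty,\R^d}$ is immediate, so only the reverse inequality needs proof.

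Fix $x\in\R^d\setminus Q_L$; points of $Q_L$ need no argument. Let $Y_m$ be the chain of Section~\ref{pc:sec:chain} under $\pi$, started at $x$, and let $\tau:=\inf\{m\ge0:Y_m\in Q_L^h\}$. Before $\tau$ the chain sits outside $Q_L^h$, where the homogeneous relation holds. Iterating it along the chain, exactly as in the proof of Lemma~\ref{pc:lem:representation} but stopped at $\tau$, gives
\[
 z(x)=\E_x^\pi\bigl[\gamma_h^{\tau}z(Y_\tau)\mathbf 1_{\{\tau<M\}}\bigr]
   +\gamma_h^{M}\E_x^\pi\bigl[z(Y_M)\mathbf 1_{\{\tau\ge M\}}\bigr].
\]
This is a finite-$M$ identity, proved by conditioning on the first jump $M$ times. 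Since $z$ is bounded by Lemma~\ref{pc:lem:representation} and $\gamma_h<1$, the remainder tends to zero as $M\to\infty$. Hence
\[
 z(x)=\E_x^\pi\bigl[\gamma_h^\tau z(Y_\tau)\mathbf 1_{\{\tau<\infty\}}\bigr],
 \qquad
 |z(x)|\le\sup_{Q_L^h}|z|\le\|z\|_{\infty,Q_L},
\]
where we use $Y_\tau\in Q_L^h\subset Q_L$ and $\gamma_h^\tau\le1$. Taking the supremum over $x$ gives the reverse inequality.

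For the second claim we use Definition~\ref{pc:def:localized_class}: a measurable localized greedy selector $\pi^*\in\Pi_L$ attains the infimum, so $v_L^*=S_{\eps,h}(\pi^*)$, and the first part applies with $\pi'=\pi^*$. Alternatively, the same stopped-chain argument works directly. The reason is that $v_L^*$ solves the default-policy equation off $Q_L^h$, so $S_{\eps,h}(\pi)-v_L^*$ again satisfies the homogeneous relation there.

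The only delicate step is the stopped iteration. It needs boundedness of $z$ to kill the remainder, and it must use the homogeneous equation only at states strictly before $\tau$, all of which lie outside $Q_L^h$. Both conditions hold here, so I expect no real obstacle. In particular, no claim about where the chain lands on first entry is needed, because it lands in $Q_L^h$ by definition of $\tau$.
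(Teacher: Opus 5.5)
Your proof is correct, but it takes a different route from the paper's.

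The paper uses a one-step maximum-principle argument. At any $x\notin Q_L^h$ the homogeneous relation gives $|z(x)|=\gamma_h|P^\pi z(x)|\le\gamma_h\|z\|_\infty$. So if the global supremum exceeded $\|z\|_{\infty,Q_L}$, a maximizing sequence outside $Q_L$ would force $\|z\|_\infty\le\gamma_h\|z\|_\infty$, hence $z\equiv0$, a contradiction.

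You instead iterate the homogeneous relation along the chain up to the first entry time $\tau$ into $Q_L^h$. This gives the representation $z(x)=\E_x^\pi[\gamma_h^\tau z(Y_\tau)\mathbf 1_{\{\tau<\infty\}}]$.

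\emph{Comparison.} The paper's route is shorter and needs no stopping time or passage to the limit $M\to\infty$. Yours needs the finite-$M$ stopped identity and boundedness of $z$ to kill the remainder. The stopped identity is most cleanly justified by the Markov property at time $M$, since $\{\tau>M\}$ is determined by $Y_0,\dots,Y_M$. In exchange, your route gives the pointwise quantitative bound $|z(x)|\le\E_x^\pi[\gamma_h^\tau\mathbf 1_{\{\tau<\infty\}}]\,\|z\|_{\infty,Q_L^h}$. This exhibits geometric decay of $z$ away from the box, in the spirit of the stopped-chain estimate of Proposition~\ref{prop:interior_certificate}.

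Both arguments in fact prove the slightly sharper identity $\|z\|_{\infty,\R^d}=\|z\|_{\infty,Q_L^h}$. Both handle the second claim the same way, by writing $v_L^*=S_{\eps,h}(\pi_L^*)$ with a localized optimal selector $\pi_L^*\in\Pi_L$.
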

\begin{proof}
Outside $Q_L^h$, the homogeneous equation gives
$|z(x)|\le\gamma_h\|z\|_\infty$. If the global supremum exceeded
the one on $Q_L$, an approximating sequence outside $Q_L$ would give
$\|z\|_\infty\le\gamma_h\|z\|_\infty$, a contradiction.
For the last assertion use $v_L^*=S_{\eps,h}(\pi_L^*)$, where
$\pi_L^*\in\Pi_L$ is a localized optimal selector.
\end{proof}
Proposition~\ref{pc:prop:class_stability} also bounds the
costs along the iteration by the initial leakage cost plus accumulated
greedy gaps. The localization argument below obtains its comparison-policy
cost bound by absorption and does not require that auxiliary sum condition.

%% file: localization_proof.tex
\begin{proof}
\emph{Comparison policy and exterior source.}
Let $\pi^{\rm opt}$ be a measurable optimal selector for
$v_{\eps,h}$ and let $\widetilde\pi$ equal $\pi^{\rm opt}$ on
$Q_L^h$ and $\bar\pi$ elsewhere. Set
$\widetilde w=S_{\eps,h}(\widetilde\pi)$ and
$z=\widetilde w-v_{\eps,h}$.
Bellman verification gives $0\le v_L^*-v_{\eps,h}\le z$.
The frozen difference equation is
\[
 \mathcal A^{\widetilde\pi}z
 =\Gamma_{\eps,h}(v_{\eps,h},\bar\pi)
       \mathbf1_{\R^d\setminus Q_L^h}.
\]
The bound $\mathfrak g^{\rm ext}\le C_{\rm ext}/(\eps h)$ follows
from $\Gamma_{\eps,h}(v,\pi)\le2M_b|\nabla_hv|+2\|f\|_\infty$,
$|\nabla_hv|\le\sqrt d\|v\|_\infty/h$, and
$\|v_{\eps,h}\|_\infty\le(\|f\|_\infty+M_p/\eps)/\lambda$,
using $\eps,h\le1$.
For $T'=\inf\{m:Y_m\notin Q_L^h\}$, the discounted representation
and $\mu(1-\gamma_h)=\lambda$ yield
\begin{equation}\label{pc:eq:localization_source_exit}
 z(x)\le\frac{\mathfrak g^{\rm ext}}{\mu}
      \E_x^{\widetilde\pi}\sum_{m\ge T'}\gamma_h^m
 =\frac{\mathfrak g^{\rm ext}}{\lambda}
      \E_x^{\widetilde\pi}
         [\gamma_h^{T'}\mathbf1_{\{T'<\infty\}}].
\end{equation}

\emph{Confinement on the reduced box.}
We apply Lemma~\ref{pc:lem:exit} with threshold $\delta/2$ and
margin $D_L^h-2h$. To verify its geometry, points in
$\overline\Omega_{\delta/2}$ cannot belong to the margin strip in
\eqref{pc:eq:margin}, so their distance from $\R^d\setminus Q_L$
exceeds $D_L+h$. Removing the stencil strip costs at most $h$;
hence $D_L^h\ge D_L$. In $Q_L^h$, points at distance at most
$D_L^h-h$ from its complement lie outside
$\overline\Omega_{\delta/2}$, and
$\dist(K,\R^d\setminus Q_L^h)\ge D_L^h$ since
$K\subset\overline\Omega$. These are the reduced-box margin conditions.

The occupation bound is pointwise in the starting state, so it applies
with $C_0=\widetilde w(x)$; no uniform bound for the truncated policy
is assumed. Since $\lambda h\le2dN$,
\begin{align}
 \E_x^{\widetilde\pi}
       [\gamma_h^{T'}\mathbf1_{\{T'<\infty\}}]
 &\le \frac{4\lambda\eps\widetilde w(x)}{\delta}
       e^{-\lambda(D_L^h-2h)/(4dN)}\notag\\
 &\le \frac{12\lambda\eps\widetilde w(x)}{\delta}
       e^{-\lambda D_L/(4dN)},\label{pc:eq:localization_confined_exit}
 \end{align}
where $e^{\lambda h/(2dN)}\le e<3$.

\emph{Cancellation and absorption.}
Combining \eqref{pc:eq:localization_source_exit} and
\eqref{pc:eq:localization_confined_exit} gives
\[
 z(x)\le \alpha_{\rm loc}\,[v_{\eps,h}(x)+z(x)],\qquad
 \alpha_{\rm loc}:=\frac{12\eps\mathfrak g^{\rm ext}}{\delta}
          e^{-\lambda D_L/(4dN)}
 \le\frac{12C_{\rm ext}}{\delta h}e^{-\lambda D_L/(4dN)}.
\]
The factor $\eps$ from confinement cancels the inverse penalty scale
in $\mathfrak g^{\rm ext}$. Condition~\eqref{pc:eq:margin_condition}
gives $\alpha_{\rm loc}\le h/2\le1/2$, so
$z\le2\alpha_{\rm loc}v_{\eps,h}\le h v_{\eps,h}$.
Finally, Lemma~\ref{pc:lem:leakage} and $h\le\eps$ imply
\[
 v_{\eps,h}\le S_{\eps,h}(\bar\pi)
       \le\|f\|_\infty/\lambda+C_*h/\eps
       \le\|f\|_\infty/\lambda+C_*
       \quad\hbox{on }K.
\]
Together with $0\le v_L^*-v_{\eps,h}\le z$, this proves the claim.
\end{proof}

%% file: evaluation_certificates.tex
\subsection{Evaluation accuracy from validated residuals}
\label{subsec:main_evaluation_certificates}
Let $w_n=S_{\eps,h}(\pi_n)$ and
$\xi_n^{\rm val}=\|v_{\theta_n}-w_n\|_{\infty,Q_L}$.
For a value-policy pair write
\begin{align}
 \mathfrak E^\infty_{\eps,h,L}(v,\pi)
   &:=\|\mathcal L^\pi_{\eps,h}v\|_{\infty,Q_L^h},
       \label{eq:policy_eval_residual_linf}\\
 \mathfrak G^\infty_{\eps,h,L}(v,\pi)
   &:=\|\Gamma_{\eps,h}(v,\pi)\|_{\infty,Q_L^h}.
       \label{eq:greedy_gap_linf}
\end{align}
The stopped-chain estimate below is the discounted counterpart of
\cite[Theorem~7.1]{KimKimChoKim2026}.

\begin{proposition}[Interior evaluation bound]\label{prop:interior_certificate}
For a bounded candidate $v$, a bounded measurable policy $\pi$, and
$K\Subset Q_L^h$, set $d_K=\dist(K,\partial_hQ_L)$ and
$\beta_{h,L}=\gamma_h^{\lceil d_K/h\rceil}$. Then
\begin{equation}\label{eq:interior_certificate}
 \|v-S_{\eps,h}(\pi)\|_{\infty,K}
 \le\lambda^{-1}\mathfrak E^\infty_{\eps,h,L}(v,\pi)
   +\beta_{h,L}\|v-S_{\eps,h}(\pi)\|_{\infty,\partial_hQ_L}.
\end{equation}
If $\lambda h\le2dN$, then
$\beta_{h,L}\le e^{-\lambda d_K/(4dN)}$.
\end{proposition}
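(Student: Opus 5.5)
The approach is a stopped-chain (optional stopping) argument for the linear frozen-policy equation. Set $z=v-S_{\eps,h}(\pi)$. Since $\mathcal L^\pi_{\eps,h}S_{\eps,h}(\pi)=0$, the difference satisfies $\mathcal A^\pi z=\mathcal L^\pi_{\eps,h}v=:r$ pointwise on $\R^d$. Both $v$ and $S_{\eps,h}(\pi)$ are bounded, the latter by Lemma~\ref{pc:lem:representation}, so $z$ is bounded. Dividing by $\mu$ puts this in the fixed-point form $z=r/\mu+\gamma_hP^\pi z$, in the spirit of \eqref{pc:eq:fixed_point_form}.

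Start the chain at $x\in K$ and let $T=\inf\{m\ge0:Y_m\notin Q_L^h\}$. Iterating the fixed-point identity up to $T\wedge M$ (the discrete Dynkin formula, justified by conditioning step by step) gives
\[
 z(x)=\frac1\mu\E_x^\pi\sum_{m<T\wedge M}\gamma_h^m r(Y_m)+\E_x^\pi\bigl[\gamma_h^{T\wedge M}z(Y_{T\wedge M})\bigr].
\]
Let $M\to\infty$ using boundedness of $z$. On $\{T=\infty\}$ the remainder tends to zero because $\gamma_h<1$.

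For $m<T$ we have $Y_m\in Q_L^h$, so $|r(Y_m)|\le\mathfrak E^\infty_{\eps,h,L}(v,\pi)$. The first term is then bounded by $\mu^{-1}(1-\gamma_h)^{-1}\mathfrak E^\infty=\lambda^{-1}\mathfrak E^\infty$, using $\mu(1-\gamma_h)=\lambda$.

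For the second term, each jump has length exactly $h$ in one coordinate. Hence the first point outside $Q_L^h$ lies within distance $h$ of $Q_L^h$. Since $h<L$, this point belongs to $Q_L\setminus Q_L^h=\partial_hQ_L$, which gives $|z(Y_T)|\le\|z\|_{\infty,\partial_hQ_L}$. Also $|Y_m-x|\le hm$. To leave $Q_L^h$ the chain must travel a distance of at least $\dist(x,\R^d\setminus Q_L^h)$, which is comparable to $d_K$. Here one needs a little care, since $\partial_hQ_L$ is the strip rather than the complement. I would read $d_K$ as the distance to the strip, i.e.\ to $\R^d\setminus Q_L^h$ within $Q_L$, so that $T\ge\lceil d_K/h\rceil$. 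Then $\E_x^\pi[\gamma_h^T\mathbf 1_{\{T<\infty\}}]\le\beta_{h,L}$, which gives \eqref{eq:interior_certificate}.

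Finally, if $\lambda h\le2dN$, the estimate from the proof of Lemma~\ref{pc:lem:exit} applies. With $t=\lambda h/(2dN)\le1$, we have $\gamma_h^{-m}=(1+t)^m\ge e^{mt/2}$. Taking $m=\lceil d_K/h\rceil\ge d_K/h$ yields $\beta_{h,L}\le e^{-\lambda d_K/(4dN)}$.

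The main obstacle is the exit bookkeeping: confirming that the exit position lands in $\partial_hQ_L$, and that the number of steps before exit is at least $\lceil d_K/h\rceil$ given the definition of $d_K$. If the boundary case $T$ exactly equal to $d_K/h$ causes trouble, I would use $\dist(x,\R^d\setminus Q_L^h)\ge d_K$ directly.
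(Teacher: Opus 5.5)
Your proposal is correct and follows the paper's own proof essentially step for step: the stopped fixed-point identity $z=r/\mu+\gamma_hP^\pi z$ iterated to $T\wedge M$, the source bound $\mu^{-1}(1-\gamma_h)^{-1}=\lambda^{-1}$, exit into $\partial_hQ_L$ because the predecessor lies in $Q_L^h$, $T\ge\lceil d_K/h\rceil$ from jumps of length $h$, and $\log(1+t)\ge t/2$ for the exponential form. Your hedge about how to read $d_K$ is unnecessary: since $Y_T\in\partial_hQ_L$, you get $d_K\le|Y_T-x|\le hT$ directly, and because $T$ is an integer this gives the ceiling, so the boundary case causes no trouble.
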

\begin{proof}
For $z=v-S_{\eps,h}(\pi)$ and $r=\mathcal L^\pi_{\eps,h}v$,
the fixed-point equation is $z=r/\mu+\gamma_hP^\pi z$.
Stop at $T=\inf\{m:Y_m\notin Q_L^h\}$. Boundedness and
$\gamma_h<1$ justify iteration to $T\wedge M$ followed by $M\to\infty$:
\[
 z(x)=\mu^{-1}\E_x^\pi\sum_{m<T}\gamma_h^m r(Y_m)
       +\E_x^\pi[\gamma_h^Tz(Y_T)\mathbf1_{\{T<\infty\}}].
\]
The source weights sum to at most $1/\lambda$. On exit,
$Y_T\in\partial_hQ_L$ because its predecessor lies in $Q_L^h$, and
$T\ge\lceil d_K/h\rceil$ because each jump has length $h$.
This proves the bound. The exponential form follows from
$\gamma_h^{-1}=1+\lambda h/(2dN)$ and
$\log(1+t)\ge t/2$ for $0\le t\le1$.
The same iteration gives the corresponding one-sided estimate when
$\mathcal A^\pi z\le r$, which will be used for Bellman comparison.
\end{proof}

The PI recursion needs accuracy on all of $Q_L$, including its stencil
strip. Validating on a larger box removes the unknown strip value.
\begin{corollary}[Closed evaluation bound]\label{cor:closed_certificate}
Assume $Q_L\Subset Q_{L'}^h$ and $\lambda h\le2dN$. With the outer
margin $d_{L,L'}=\dist(Q_L,\partial_hQ_{L'})$, one has
\begin{equation}\label{eq:closed_certificate}
 \xi_n^{\rm val}\le
 \lambda^{-1}\mathfrak E^\infty_{\eps,h,L'}(v_{\theta_n},\pi_n)
 +e^{-\lambda d_{L,L'}/(4dN)}
       (\|v_{\theta_n}\|_{\infty,\partial_hQ_{L'}}+M_\eps).
\end{equation}
\end{corollary}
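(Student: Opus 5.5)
We would prove the closed evaluation bound by applying Proposition~\ref{prop:interior_certificate} on the larger box $Q_{L'}$, taking the compact set there to be $Q_L$ itself. The hypothesis $Q_L\Subset Q_{L'}^h$ is exactly what that proposition requires.

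\emph{Step 1: apply the interior bound.} Put $v=v_{\theta_n}$, $\pi=\pi_n$, and use the box $Q_{L'}$ in place of $Q_L$, so the distance $d_K$ becomes $d_{L,L'}=\dist(Q_L,\partial_hQ_{L'})$. The proposition gives
\[
 \|v_{\theta_n}-w_n\|_{\infty,Q_L}
 \le\lambda^{-1}\mathfrak E^\infty_{\eps,h,L'}(v_{\theta_n},\pi_n)
 +\beta_{h,L'}\|v_{\theta_n}-w_n\|_{\infty,\partial_hQ_{L'}}.
\]
The left side is $\xi_n^{\rm val}$ by definition. The residual norm on $Q_{L'}^h$ is the quantity $\mathfrak E^\infty_{\eps,h,L'}$ defined in \eqref{eq:policy_eval_residual_linf}. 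The argument is purely a stopped-chain estimate, so it does not matter that $\pi_n\in\Pi_L$ equals $\bar\pi$ on $Q_{L'}\setminus Q_L^h$.

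\emph{Step 2: exponential factor.} Since $\lambda h\le 2dN$, the second part of Proposition~\ref{prop:interior_certificate} gives $\beta_{h,L'}\le e^{-\lambda d_{L,L'}/(4dN)}$.

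\emph{Step 3: strip term.} The unknown value $w_n$ on the strip is controlled by Lemma~\ref{pc:lem:representation}, which gives $0\le w_n\le M_\eps$ pointwise. The triangle inequality then yields
\[
 \|v_{\theta_n}-w_n\|_{\infty,\partial_hQ_{L'}}\le\|v_{\theta_n}\|_{\infty,\partial_hQ_{L'}}+M_\eps .
\]
Combining Steps 1--3 gives \eqref{eq:closed_certificate}.

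There is no substantial obstacle; the only care needed is bookkeeping. First, $d_{L,L'}>0$ holds because $Q_L$ is compact in the interior of $Q_{L'}^h$. Second, the exit point of the chain started in $Q_L$ lies in $\partial_hQ_{L'}$ (as shown in the proof of the proposition), and the exit time is at least $\lceil d_{L,L'}/h\rceil$. Third, the stopping argument used in Step 1 relies only on boundedness of $v_{\theta_n}$ and $w_n$, both of which are assumed.
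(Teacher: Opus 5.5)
Your proposal is correct and matches the paper's own proof: apply Proposition~\ref{prop:interior_certificate} on the box $Q_{L'}$ with $K=Q_L$. Then use $\lambda h\le 2dN$ for the exponential bound on $\beta$, and control the strip mismatch by $\|v_{\theta_n}\|_{\infty,\partial_hQ_{L'}}+M_\eps$ via $0\le w_n\le M_\eps$ from Lemma~\ref{pc:lem:representation}.
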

\begin{proof}
Apply Proposition~\ref{prop:interior_certificate} in $Q_{L'}$ with
$K=Q_L$, and use $0\le w_n\le M_\eps$ on the outer strip.
\end{proof}
This supplies a sufficient condition for the evaluation requirement in
the refinement theorem. The residual supremum and strip amplitude must
be bounded independently of the sampled training loss. The validation
margin can depend on $\eps$ through $M_\eps$; its role differs from the
penalty-uniform improvement-box margin in
Theorem~\ref{pc:thm:localization}. Appendix~\ref{supp:l2_lifting} treats conversion from $L^2$ residuals under
additional regularity; that regularity is not automatic for bang-bang policies.

\subsection{Greedy-gap transfer}
The following Hamiltonian estimate is the same transfer mechanism as
\cite[Lemma~7.3 and Corollary~7.4]{KimKimChoKim2026}.
\begin{lemma}\label{lem:greedy_gap_transfer}
For bounded $v,w$ and any policy $\pi$,
\begin{equation}\label{eq:greedy_gap_transfer}
 \Gamma_{\eps,h}(w,\pi)(x)\le\Gamma_{\eps,h}(v,\pi)(x)
                         +2M_b|\nabla_h(v-w)(x)|.
\end{equation}
In particular, with $g_n=\sup_{Q_L^h}\Gamma_{\eps,h}(w_n,\pi_{n+1})$,
\begin{equation}\label{eq:greedy_gap_transfer_applied}
 g_n\le\mathfrak G^\infty_{\eps,h,L}(v_{\theta_n},\pi_{n+1})
                     +\frac{2\sqrt dM_b}{h}\xi_n^{\rm val}.
\end{equation}
\end{lemma}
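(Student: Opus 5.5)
The estimate compares the Hamiltonians of $v$ and $w$ pointwise, using that the frozen residuals of a fixed policy cancel in the same way for both. By \eqref{eq:semidiscrete_policy_operator}--\eqref{eq:semidiscrete_bellman_operator}, the terms $\lambda v$, $-\nu_h\Delta_hv$ and $-p_\Omega/\eps$ appear in both operators, so
\[
 \Gamma_{\eps,h}(v,\pi)(x)=H(x,\nabla_hv(x))+b(x,\pi(x))\cdot\nabla_hv(x)+f(x,\pi(x)),
\]
and likewise for $w$. Hence
\[
 \Gamma_{\eps,h}(w,\pi)-\Gamma_{\eps,h}(v,\pi)
 =\bigl[H(x,\nabla_hw)-H(x,\nabla_hv)\bigr]+b(x,\pi(x))\cdot\nabla_h(w-v).
\]
The map $p\mapsto H(x,p)$ is a supremum of affine functions $-b(x,a)\cdot p-f(x,a)$ with slopes bounded by $M_b$. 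It is therefore $M_b$-Lipschitz, so the bracket is at most $M_b|\nabla_h(v-w)|$. The second term is bounded by Cauchy--Schwarz by $M_b|\nabla_h(v-w)|$. Adding the two gives \eqref{eq:greedy_gap_transfer}. Boundedness of $v,w$ only ensures that all quantities are finite, and no measurability issue arises because the inequality is pointwise.

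For \eqref{eq:greedy_gap_transfer_applied}, I take $v=v_{\theta_n}$, $w=w_n$, $\pi=\pi_{n+1}$ and $x\in Q_L^h$. The stencil points $x\pm he_i$ then lie in $Q_L$. Each component of the centered difference satisfies
\[
 |(\nabla_h(v-w))_i(x)|\le \frac{2\xi_n^{\rm val}}{2h}=\frac{\xi_n^{\rm val}}{h},
\]
so $|\nabla_h(v-w)(x)|\le\sqrt d\,\xi_n^{\rm val}/h$. Taking the supremum over $Q_L^h$, and bounding $\Gamma_{\eps,h}(v_{\theta_n},\pi_{n+1})$ there by $\mathfrak G^\infty_{\eps,h,L}$, yields the claim.

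There is no real obstacle in this argument. The only points needing care are that $\xi_n^{\rm val}$ is measured on all of $Q_L$, including the stencil strip, which is exactly why $x$ is restricted to $Q_L^h$, and that $\Gamma\ge0$, so the supremum of $\Gamma$ is controlled by its sup-norm.
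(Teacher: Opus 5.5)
Your proposal is correct and follows the paper's argument. You reduce the gap to $H(x,\nabla_hv)+b(x,\pi(x))\cdot\nabla_hv+f(x,\pi(x))$ and use that the Hamiltonian and the frozen controlled Hamiltonian are each $M_b$-Lipschitz in the gradient to get the factor $2M_b$; then, because all stencil neighbors of a point of $Q_L^h$ lie in $Q_L$, you bound the centered difference there by $\sqrt d\,\xi_n^{\rm val}/h$. Your appeal to $\Gamma\ge0$ in the last step is harmless but unnecessary, since $\sup\Gamma\le\sup|\Gamma|$ holds regardless.
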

\begin{proof}
Each controlled Hamiltonian and its supremum are $M_b$-Lipschitz in
the gradient; subtracting their values gives the factor $2M_b$.
For $x\in Q_L^h$, all stencil neighbors lie in $Q_L$, so
$|\nabla_h(v_{\theta_n}-w_n)(x)|\le\sqrt d\xi_n^{\rm val}/h$.
\end{proof}
Exact greedy improvement makes the first term in
\eqref{eq:greedy_gap_transfer_applied} zero, including at ties.
No continuity of the selected control is used; Remark~\ref{rem:ties} explains why comparing two intermediate policy
values would instead require additional control.

%% file: policy_iteration_proof.tex
\begin{proof}
Set $\mathcal T^\pi\phi=c_\pi/\mu+\gamma_hP^\pi\phi$, and let
$\mathcal T_L$ minimize this expression over controls on $Q_L^h$ and
use $\bar\pi$ elsewhere. Both maps are monotone $\gamma_h$-contractions
and satisfy $\mathcal T(\phi+c)=\mathcal T\phi+\gamma_hc$
for either map $\mathcal T$ and any constant $c$.
The fixed points of $\mathcal T^{\pi_n}$, $\mathcal T^{\pi_{n+1}}$,
and $\mathcal T_L$ are $w_n$, $w_{n+1}$, and $v_L^*$, respectively.
With $\delta_n=g_n/\mu$, the operator identities
\begin{equation}\label{eq:operator_identities}
 \mathcal L^\pi_{\eps,h}\phi=\mu(\phi-\mathcal T^\pi\phi),\qquad
 \Gamma_{\eps,h}(\phi,\pi)=\mu(\mathcal T^\pi\phi-\mathcal T_L\phi)
       \quad\text{on }Q_L^h
\end{equation}
give $0\le\mathcal T^{\pi_{n+1}}w_n-\mathcal T_Lw_n\le\delta_n$
globally: outside $Q_L^h$ the operator difference is zero.
Since $\mathcal T_Lw_n\le w_n$, the constant
$c_n=\delta_n/(1-\gamma_h)$ makes $w_n+c_n$ a supersolution for
$\mathcal T^{\pi_{n+1}}$. Monotone iteration gives
$w_{n+1}\le w_n+c_n$, and applying $\mathcal T^{\pi_{n+1}}$ once more,
\[
 w_{n+1}\le\mathcal T^{\pi_{n+1}}w_n+\gamma_hc_n
       \le\mathcal T_Lw_n+c_n.
\]
Using $v_L^*\le w_{n+1}$ and contraction yields
$0\le w_{n+1}-v_L^*\le\gamma_he_n+c_n$.
Finally $c_n=g_n/[\mu(1-\gamma_h)]=g_n/\lambda$.
\end{proof}

%% file: squared_penalty.tex
\subsection{Bounded squared-distance penalties}
\label{subsec:squared_penalty}

The experiments use a bounded squared-distance penalty. We specify its
penalization limit, leakage bound, and sufficient localization margin.
For a penalty $p$, write $u^p_\varepsilon$ for the continuous optimal value
and $S^p_{\varepsilon,h}(\pi)$ for a discrete policy value. Denote the
whole-space and localized Bellman values by $v^p_{\varepsilon,h}$ and
$v_L^{*,p}$, respectively, and set
$w_{\bar\pi}[p](x)=\mu^{-1}\E_x^{\bar\pi}\sum_{m\ge0}\gamma_h^m p(Y_m)$
for its discounted penalty value. In this subsection $C_*$ and $h_0$
are the leakage constants for the indicated truncated distance penalty.

\begin{proposition}[Squared-distance extension]
\label{prop:squared_penalty}
Fix $R>0$, independently of $h$ and $\varepsilon$, and set
\[
 p_R(x)=\min\{\dist(x,\overline\Omega),R\},
 \qquad q_R(x)=p_R(x)^2.
\]
Assume the standing hypotheses for the distance-penalty analysis and the
inward default policy of Lemma~\ref{pc:lem:leakage}; for the discrete
estimates take $0<h\le h_0$. Then $q_R$ is bounded
and Lipschitz, vanishes exactly on $\overline\Omega$, and satisfies:
\begin{enumerate}[label=(\roman*),leftmargin=2em]
\item The continuous penalized values with running cost $f+q_R/\varepsilon$
converge uniformly to the state-constraint value on $\overline\Omega$.
\item If $S^{q_R}_{\varepsilon,h}(\pi)(x)\le C_0$, then for every $\delta>0$,
\[
 \frac1\mu\E_x^\pi\sum_{m=0}^{\infty}\gamma_h^m
 \mathbf1_{\{\dist(Y_m,\overline\Omega)\ge\delta\}}
 \le \frac{\varepsilon C_0}{c_\delta},
 \qquad c_\delta=\min\{\delta,R\}^2.
\]
\item The default policy has penalty value
\[
 w_{\bar\pi}[q_R](x)\le RC_*h
 \quad(x\in\overline\Omega),
 \qquad
 S^{q_R}_{\varepsilon,h}(\bar\pi)(x)
 \le\frac{\|f\|_\infty}{\lambda}
       +\frac{RC_*h}{\varepsilon}.
\]
\end{enumerate}
For localization, take $0<\delta\le R$, assume the geometric conditions
of Assumption~\ref{pc:ass:margin}, and retain the small-parameter and reduced-box
conditions of Theorem~\ref{pc:thm:localization}, with $h_0$ for $p_R$.
Set
\[
 C_{\rm ext}^{(q)}
 :=\frac{2\sqrt d\,M_b(\|f\|_\infty+R^2)}{\lambda}
       +2\|f\|_\infty.
\]
Replacing \eqref{pc:eq:margin_condition} by
\begin{equation}\label{eq:squared_margin}
 e^{-\lambda D_L/(4dN)}
 \le\frac{c_{\delta/2}h^2}{12C_{\rm ext}^{(q)}}
\end{equation}
gives, for $x\in K$,
\[
 0\le v_L^{*,q_R}(x)-v^{q_R}_{\varepsilon,h}(x)
 \le h\left(\frac{\|f\|_\infty}{\lambda}+RC_*\right).
\]
The comparison, policy-iteration, and residual estimates use the penalty
bound $R^2$. The refinement results use this localization bound and
\eqref{eq:squared_margin} in place of their distance-penalty counterparts.
\end{proposition}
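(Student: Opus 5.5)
The plan is to reduce every item to its distance-penalty counterpart by comparing $q_R$ with $p_R$. The basic facts come first. Since $0\le p_R\le R$ and $p_R$ is $1$-Lipschitz, we have $q_R\le R^2$ and $|q_R(x)-q_R(y)|\le 2R|x-y|$. Also $q_R$ vanishes exactly where $p_R$ does, namely on $\overline\Omega$. The key pointwise inequalities are
\[
 q_R\le R\,p_R,\qquad q_R\ge c_\delta \text{ on }\{\dist(\cdot,\overline\Omega)\ge\delta\},\quad c_\delta=\min\{\delta,R\}^2 .
\]

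For (i), I would not redo the Capuzzo-Dolcetta--Lions argument. Instead I would observe that the penalty convergence theorem only uses that the penalty is bounded, nonnegative, Lipschitz, vanishes exactly on $\overline\Omega$, and is bounded below by a positive constant away from $\overline\Omega$; $q_R$ has all of these properties. As a sandwich check, the lower bound $u^{q_R}_\eps\le u$ holds trivially, because admissible trajectories incur zero penalty. The upper side is where the structure of the cited proof is needed. I expect this citation-level justification, rather than any computation, to be the main delicate point. If a self-contained route is wanted, I would try $q_R\ge \min\{\delta,R\}\,p_R-\delta R$ together with the convergence for $p_R/\eps$ at rescaled penalty parameters. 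That version would have to be checked with care, so I would primarily rely on the structural reading of the theorem.

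For (ii), I would repeat the proof of Lemma~\ref{pc:lem:occupation} verbatim. With $f\ge0$, the representation \eqref{pc:eq:representation} bounds $(\eps\mu)^{-1}\E_x^\pi\sum_m\gamma_h^m q_R(Y_m)$ by $C_0$, and $q_R\ge c_\delta$ on the exterior set then gives the claim. For (iii), I would use $q_R\le Rp_R$ and positivity of the discounted representation to get $w_{\bar\pi}[q_R]\le R\,w_{\bar\pi}[p_R]$. Lemma~\ref{pc:lem:leakage}, applied with truncation level $M_p=R$, then gives $\le RC_*h$ on $\overline\Omega$. Adding $\|f\|_\infty/\lambda$ yields the value bound.

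For localization, I would rerun the proof of Theorem~\ref{pc:thm:localization} with the following substitutions. The Bellman verification and the frozen difference equation for $z=\widetilde w-v^{q_R}_{\eps,h}$ are unchanged. The exterior gap bound becomes $\mathfrak g^{\rm ext}\le C^{(q)}_{\rm ext}/(\eps h)$, using $\|v^{q_R}_{\eps,h}\|_\infty\le(\|f\|_\infty+R^2/\eps)/\lambda$ and $\eps,h\le1$. Next comes the exit step. I would apply Lemma~\ref{pc:lem:exit} at threshold $\delta/2$, with the occupation bound (ii) in place of Lemma~\ref{pc:lem:occupation}; this replaces the factor $2/\delta$ by $1/c_{\delta/2}$, and the reduced-box geometry is identical. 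The result is $\E[\gamma_h^{T'}\mathbf 1_{\{T'<\infty\}}]\le (6\lambda\eps\widetilde w(x)/c_{\delta/2})e^{-\lambda D_L/(4dN)}$. Absorption then gives $\alpha_{\rm loc}\le 6C^{(q)}_{\rm ext}e^{-\lambda D_L/(4dN)}/(c_{\delta/2}h)$, and \eqref{eq:squared_margin} yields $\alpha_{\rm loc}\le h/2$, so $z\le h\,v^{q_R}_{\eps,h}$. Finally, (iii) and $h\le\eps$ give $v^{q_R}_{\eps,h}\le\|f\|_\infty/\lambda+RC_*$ on $K$.

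The remaining statements, about the comparison, policy-iteration and residual estimates, follow because those proofs use only boundedness of the penalty, now by $R^2$ in place of $M_p$.
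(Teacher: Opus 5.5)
\textbf{Gap in item (i).} Your treatment of (ii), (iii), the localization rerun (exit weight $6\lambda\eps\widetilde w(x)e^{-\lambda D_L/(4dN)}/c_{\delta/2}$, then $\alpha_{\rm loc}\le h/2$) and the closing remarks agrees with the paper's proof; item (i) is not proved.

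Your primary argument asserts which properties of the penalty the Capuzzo-Dolcetta--Lions proof actually uses. The convergence theorem available here is stated only for the distance penalty (P), so that assertion is an unverified claim about a cited proof, not an argument.

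Your fallback fails as written. From $q_R\ge\delta p_R-\delta R$ you get
\[
 u^{q_R}_\eps\ \ge\ u^{p_R}_{\eps/\delta}-\frac{\delta R}{\lambda\eps}.
\]
The subtracted constant vanishes only if $\delta=o(\eps)$. In that regime the rescaled parameter $\eps/\delta$ tends to infinity, not zero, so the distance-penalty theorem cannot be applied. The remainder $\delta R$ is linear in the slope, and that is too lossy.

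\textbf{The fix (the paper's route).} Complete the square: $t^2\ge\eta t-\eta^2/4$ for all $t$, so
\[
 q_R/\eps\ \ge\ p_R/(\eps/\eta)-\eta^2/(4\eps).
\]
Integrate along each whole-space control and take infima. Combined with the trivial bound from constrained controls, this gives
\[
 u^{p_R}_{\eps/\eta}-\frac{\eta^2}{4\lambda\eps}\ \le\ u^{q_R}_\eps\ \le\ u
 \qquad\text{on }\overline\Omega .
\]
Choose $\eta=\eps^{3/4}$. Then the rescaled parameter is $\eps^{1/4}\to0$ and the error term is $\eps^{1/2}/(4\lambda)\to0$. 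Uniform convergence follows from the distance-penalty theorem with $M_p=R$, and no rate is needed.

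Your own inequality also works once repaired to $q_R\ge\delta p_R-\delta^2$. This holds because $t^2-\delta t+\delta^2=(t-\delta/2)^2+3\delta^2/4\ge0$, and it suffices with $\delta=\eps^{3/4}$. What matters is that the remainder is quadratic in the slope.
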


\begin{proof}
The map $q_R$ is bounded by $R^2$ and has Lipschitz constant at most $2R$.
For $\eta>0$, the elementary inequality $t^2\ge\eta t-\eta^2/4$ gives,
by integration along each admissible whole-space control and then taking
infima,
\[
 u^{p_R}_{\varepsilon/\eta}(x)
       -\frac{\eta^2}{4\lambda\varepsilon}
 \le u^{q_R}_{\varepsilon}(x)
 \le u^{p_R}_{\varepsilon/R}(x)
 \le u(x),\qquad x\in\overline\Omega.
\]
The last inequality uses constrained controls.  Choose
$\eta=\varepsilon^{3/4}$ and apply the distance-penalty convergence
theorem: both penalty parameters tend to zero, and the subtracted constant
is $\varepsilon^{1/2}/(4\lambda)$.  This proves (i) without assuming a
quantitative rate for the distance-penalty limit.

For (ii), nonnegativity of the running cost and $q_R\ge c_\delta$ on the
specified set give the estimate directly from the discounted Markov
representation.  For (iii), $0\le q_R\le Rp_R$ pointwise, so the same
representation under the fixed policy $\bar\pi$ gives
$w_{\bar\pi}[q_R]\le Rw_{\bar\pi}[p_R]\le RC_*h$.
The bound on the running-cost contribution is $\|f\|_\infty/\lambda$.
For localization, repeat the proof of Theorem~\ref{pc:thm:localization}
with the truncated comparison policy $\widetilde\pi$ and
$z=S^{q_R}_{\varepsilon,h}(\widetilde\pi)-v^{q_R}_{\varepsilon,h}$.
The exterior source is bounded by $C_{\rm ext}^{(q)}/(\varepsilon h)$.
Part (ii), at threshold $\delta/2$, gives a reduced-box exit weight at
most $6\lambda\varepsilon S^{q_R}_{\varepsilon,h}(\widetilde\pi)(x)
e^{-\lambda D_L/(4dN)}/c_{\delta/2}$. Hence
\[
 0\le v_L^{*,q_R}-v^{q_R}_{\varepsilon,h}\le z
 \le\frac{6C_{\rm ext}^{(q)}}{c_{\delta/2}h}
       e^{-\lambda D_L/(4dN)}(v^{q_R}_{\varepsilon,h}+z).
\]
Condition~\eqref{eq:squared_margin} makes the coefficient at most
$h/2\le1/2$. Absorption and part (iii) give the claimed bound.
The remaining estimates use only the global penalty bound and this
localization estimate, so the stated replacements suffice.
\end{proof}

\begin{remark}[A fixed penalty scale]
The implementation also allows
$q(x)=\min\{\kappa\dist(x,\overline\Omega)^2,R^2\}$ with a fixed
$\kappa>0$. This equals $\kappa p_r^2$ for $r=R/\sqrt\kappa$.
Its global bound is $R^2$, its lower bound at distance $\delta$ is
$c_\delta=\min\{\kappa\delta^2,R^2\}$, and its default-policy leakage is at most
$\kappa rC_*h=R\sqrt\kappa C_*h$.  Penalization convergence follows by
using $\varepsilon/\kappa$ in part (i). For localization use
\eqref{eq:squared_margin} with $c_{\delta/2}=\min\{\kappa\delta^2/4,R^2\}$,
$0<\delta\le r$, and the small-$h$ threshold for $p_r$.
When $h\le\varepsilon$, the default-policy
total cost is bounded by $\|f\|_\infty/\lambda+R\sqrt\kappa C_*$.
Thus increasing $\kappa$ strengthens the boundary penalty while its global
bound remains $R^2$. This fixed scaling is used for obstacle navigation.
\end{remark}

%% file: final_residual.tex
\subsection{Error of the final Bellman output}
\label{subsec:final_bellman_certificate}
The final-output bound below applies to any bounded candidate, including
one obtained by direct Bellman residual minimization.
Use the chosen penalty $p$ in all operators below, set
$M_\eps=(\|f\|_\infty+\|p\|_\infty/\eps)/\lambda$ and define
\[
 \mathcal R_L[v](x)=\begin{cases}
 \mathcal F_{\eps,h}[v](x),&x\in Q_L^h,\\
 \mathcal L^{\bar\pi}_{\eps,h}v(x),&x\notin Q_L^h.
 \end{cases}
\]
The following is the discounted localized counterpart of
\cite[Corollary~7.2]{KimKimChoKim2026}.
\begin{proposition}[Final-output residual bound]
\label{prop:final_bellman_certificate}
For a bounded candidate $v$, $L'>L$, and $K\Subset Q_{L'}^h$, put
$d_K=\dist(K,\partial_hQ_{L'})$ and
$\beta_K=\gamma_h^{\lceil d_K/h\rceil}$. Under monotonicity and bounded data,
\begin{equation}\label{eq:final_bellman_certificate}
 \|v-v_L^*\|_{\infty,K}
 \le\lambda^{-1}\|\mathcal R_L[v]\|_{\infty,Q_{L'}^h}
    +\beta_K(\|v\|_{\infty,\partial_hQ_{L'}}+M_\eps).
\end{equation}
The same estimate holds with all sets and suprema restricted to one lattice coset.
\end{proposition}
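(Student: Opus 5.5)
Our plan is to reproduce the stopped-chain argument of Proposition~\ref{prop:interior_certificate}, with the frozen linear operator replaced by the localized Bellman operator. The comparison is split into two one-sided estimates. Set $z=v-v_L^*$ and let $\pi_L^*\in\Pi_L$ be a measurable localized optimal selector, so that $v_L^*=S_{\eps,h}(\pi_L^*)$ (Definition~\ref{pc:def:localized_class}). Let $\pi_v$ be a measurable greedy selector for $v$ on $Q_L^h$, extended by $\bar\pi$ outside $Q_L^h$, so that $\pi_v\in\Pi_L$.

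\emph{Step 1: operator inequalities.} On $Q_L^h$ we have $\mathcal F_{\eps,h}[v]=\mathcal L^{\pi_v}_{\eps,h}v$, and outside $Q_L^h$ we have $\mathcal R_L[v]=\mathcal L^{\bar\pi}_{\eps,h}v=\mathcal L^{\pi_v}_{\eps,h}v$. Hence $\mathcal L^{\pi_v}_{\eps,h}v=\mathcal R_L[v]$ everywhere. Moreover $\mathcal L^{\pi_v}_{\eps,h}v_L^*\ge0$, since $v_L^*$ solves the localized Bellman equation and the greedy choice minimizes there. Therefore
\[
\mathcal A^{\pi_v}z\le\mathcal R_L[v].
\]
In the other direction, $\mathcal L^{\pi_L^*}_{\eps,h}v\ge\mathcal R_L[v]$ on $Q_L^h$ by the definition of $\mathcal F$ as a minimum, and the two sides are equal outside $Q_L^h$. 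Combined with $\mathcal L^{\pi_L^*}_{\eps,h}v_L^*=0$, this gives
\[
\mathcal A^{\pi_L^*}z\ge\mathcal R_L[v].
\]
The identical argument shows that $\mathcal F_{\eps,h}[v]=\mathcal L^{\pi}v+\Gamma$ with $\Gamma\ge0$.

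\emph{Step 2: stopped representation.} For each one-sided inequality we write the fixed-point form $z\le r/\mu+\gamma_hP^{\pi_v}z$ (respectively $z\ge r/\mu+\gamma_hP^{\pi_L^*}z$) with $r=\mathcal R_L[v]$. We iterate up to $T\wedge M$, where $T=\inf\{m:Y_m\notin Q_{L'}^h\}$, and then let $M\to\infty$. This is justified because $z$ is bounded and $\gamma_h<1$, exactly as in the one-sided remark at the end of the proof of Proposition~\ref{prop:interior_certificate}. The source terms contribute at most $\lambda^{-1}\|r\|_{\infty,Q_{L'}^h}$, because the chain occupies $Q_{L'}^h$ for all $m<T$. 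At exit, $Y_T\in\partial_hQ_{L'}$, because each jump has length $h$ and the predecessor of $Y_T$ lies in $Q_{L'}^h$. For the same reason, a chain started in $K$ satisfies $T\ge\lceil d_K/h\rceil$, so the exit weight is at most $\beta_K$.

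\emph{Step 3: bounding the boundary term.} On the strip $\partial_hQ_{L'}$ we have $|z|\le\|v\|_{\infty,\partial_hQ_{L'}}+\|v_L^*\|_\infty$, and $0\le v_L^*\le M_\eps$ by Lemma~\ref{pc:lem:representation}, applied with penalty bound $\|p\|_\infty$. Combining the upper and lower estimates then yields \eqref{eq:final_bellman_certificate}.

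\emph{Coset version.} Since the chain started at $x$ stays on the lattice $x+h\mathbb Z^d$, every supremum in the argument may be taken over that coset alone.

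The only delicate point is Step~1. The two one-sided bounds use \emph{different} policies, $\pi_v$ for the upper bound and $\pi_L^*$ for the lower bound. We must check that both policies belong to $\Pi_L$, so that they agree with $\bar\pi$ outside $Q_L^h$ and the residual $\mathcal R_L[v]$ is exactly the correct source there. Measurability of $\pi_v$ follows from a measurable selection on the compact set $A$. No continuity of either selector is needed.
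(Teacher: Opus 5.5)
Your overall plan is the paper's: two one-sided comparisons using two localized selectors, the stopped chain on $Q_{L'}^h$, and the strip bound from $0\le v_L^*\le M_\eps$. However, Step~1 pairs each selector with the wrong direction, and as written both operator inequalities are false. Since $H(x,p)=\sup_a\{-b(x,a)\cdot p-f(x,a)\}$, one has $\mathcal F_{\eps,h}[\phi]=\sup_a\mathcal L^a_{\eps,h}\phi\ge\mathcal L^\pi_{\eps,h}\phi$ for every $\pi$. This is exactly the $\Gamma\ge0$ you record at the end of Step~1, and it contradicts your line ``by the definition of $\mathcal F$ as a minimum''. The greedy control minimizes $f+b\cdot\nabla_h\phi$, which means it \emph{maximizes} $\mathcal L^a\phi$. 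Consequently $\mathcal L^{\pi_v}_{\eps,h}v_L^*\le0$ and $\mathcal L^{\pi_L^*}_{\eps,h}v\le\mathcal R_L[v]$ on $Q_L^h$. More precisely,
\[
 \mathcal A^{\pi_v}z=\mathcal R_L[v]+\Gamma_{\eps,h}(v_L^*,\pi_v)\mathbf 1_{Q_L^h},\qquad
 \mathcal A^{\pi_L^*}z=\mathcal R_L[v]-\Gamma_{\eps,h}(v,\pi_L^*)\mathbf 1_{Q_L^h}.
\]
The gap $\Gamma_{\eps,h}(v_L^*,\pi_v)$ is strictly positive wherever $\pi_v$ is not greedy for $v_L^*$, and likewise $\Gamma_{\eps,h}(v,\pi_L^*)$ wherever $\pi_L^*$ is not greedy for $v$. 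So your claimed inequalities $\mathcal A^{\pi_v}z\le\mathcal R_L[v]$ and $\mathcal A^{\pi_L^*}z\ge\mathcal R_L[v]$ fail in general, and Step~2 would be applied to inequalities that do not hold.

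The repair is to swap the roles. The optimal selector $\pi_L^*$ of $v_L^*$ gives the upper bound $\mathcal A^{\pi_L^*}z\le\mathcal R_L[v]$. The greedy selector $\pi_v$ of $v$ gives the lower bound $\mathcal A^{\pi_v}(-z)\le-\mathcal R_L[v]$. This is precisely the pair the paper uses, and both inequalities hold on the exterior branch because both selectors equal $\bar\pi$ there. The stopped-chain estimate in Step~2 is uniform in the policy, so after the swap your Steps~2 and~3 and the coset remark go through unchanged.
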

\begin{proof}
Choose localized greedy selectors $\pi^*$ for $v_L^*$ and $\pi_v$ for
$v$, both equal to $\bar\pi$ outside $Q_L^h$. For $z=v-v_L^*$,
the supremum structure of the Bellman residual gives
\[
 \mathcal A^{\pi^*}z\le\mathcal R_L[v],\qquad
 \mathcal A^{\pi_v}(-z)\le-\mathcal R_L[v].
\]
These inequalities hold on the exterior branch as well. Apply the stopped
comparison from Proposition~\ref{prop:interior_certificate} to each sign,
in the box $Q_{L'}$, and use $0\le v_L^*\le M_\eps$ on its strip.
\end{proof}
Adding the penalty, discretization, and localization errors bounds the
error against $u$ without accumulating intermediate evaluation errors.
The residual supremum must be validated as in
Section~\ref{subsec:main_evaluation_certificates}.

%% file: experiments.tex
\section{Numerical experiments}
\label{sec:experiments}

The experiments address three questions: how leakage and localization
depend on mesh and penalty scales; which error components remain when
the neural residual decreases; and how policy evaluation affects
obstacle navigation under matched neural training conditions. Explicit
solutions provide independent accuracy references, including a cylindrical
example in dimensions up to twenty.
Code, configurations, and recorded experimental artifacts are available at
\url{https://github.com/yeoneung/state_constraint_discrete}.

The neural runs and reference solves use the analyzed centered operator, its matching greedy
rule, and the common inward policy outside $Q_L^h$. The Bellman problems use
\[
 q(x)=\min\{\kappa\dist(x,\overline\Omega)^2,R^2\},\qquad R=0.3,
\]
as covered by Proposition~\ref{prop:squared_penalty}.
We use the viscosity multiplier $\rho\ge1/2$, setting
$N=\rho\sup_{x,a}\max_i|b_i(x,a)|$ and $\nu_h=Nh$, and
enforce $h\le\varepsilon$. Training and residual validation use the larger
box $Q_{L'}^h$ without clipping shifted network arguments.
All multilayer perceptrons (MLPs) have
three tanh hidden layers and bounded outputs; the reported stage is always
the last prescribed one. Analytic benchmark values and test trajectories never enter
training. Appendix~\ref{supp:numerical_protocol} gives the
complete sampling, precision, optimization and validation protocol.

\input{benchmarks}

\subsection{Confinement and resolved error diagnostics}
\label{subsec:reference_experiments}

For Benchmark II with $k=2$, $\rho=4$, and $\bar\pi(x)=-\tanh x$,
finite-lattice solves of $\mathcal A^{\bar\pi}\widehat w_r=r$ isolate
penalty-only leakage for $r=p,q$, where
$p=\min\{\dist(x,[-2,2]),0.3\}$ and $q$ uses $\kappa=1$.
For localization with $q/\varepsilon$, we test all 15 pairs $h\le\varepsilon$
from $\{0.04,0.02,0.01,0.005,0.0025\}$, including all five diagonal pairs
$h=\varepsilon$. The same rule $L(h)=2+0.06+0.02\log(0.04/h)$ is used at
every penalty scale.
Let $E_{\rm loc}$ be the maximum difference on domain nodes between
the localized and unrestricted Bellman values computed on $[-40,40]$.
\input{generated/confinement_results}
The separate coupled-refinement study also approaches the constrained
value: along $h=\varepsilon/2$, reducing $\varepsilon$ from $0.1$ to $0.003$
reduces the reference root-mean-square (RMS) errors in I and II from $4.95\times10^{-3}$ and
$4.30\times10^{-2}$ to $1.45\times10^{-4}$ and $1.72\times10^{-4}$,
respectively (Appendix~\ref{supp:mechanism_details}).

The neural diagnostics for I and II use $k=2$,
$(\varepsilon,h,\rho)=(0.03,0.015,0.55)$, and $(L,L')=(4,7)$.
An independent banded solver evaluates every saved neural policy and
computes the finite-box counterparts of $e_n$, $g_n$, and $e_{n+1}$ with
common zero Dirichlet data. The resulting conservative recursion bounds
are shown for all five seeds in Figure~\ref{fig:policy_recursion}.

To distinguish approximation effects, let $U_h$ be the finite-box Bellman
value, $W_n$ the frozen-policy value, and $U_{\rm fine}$ the Bellman value
on a nested finer mesh, all with the same penalty, boxes, and midpoint
Dirichlet data $M_\varepsilon/2$. On the same domain nodes, write
\begin{equation}\label{eq:numerical_decomposition}
 v_{\theta_n}-u
 =\underbrace{v_{\theta_n}-W_n}_{\text{evaluation}}
 +\underbrace{W_n-U_h}_{\text{policy iteration}}
 +\underbrace{U_h-U_{\rm fine}}_{\text{mesh/viscosity}}
 +\underbrace{U_{\rm fine}-u}_{\text{remaining bias}}.
\end{equation}
The final term retains penalty and localization effects and a small
unresolved discretization component; it is not an exact continuum penalty
error. The last mesh halving changes this proxy by less than $2\%$
in RMS (Appendix~\ref{supp:mechanism_details}). Norms of
the four signed fields need not add to the total error.

\input{generated/mechanism_results}
\begin{figure}[tbp]
\centering\includegraphics[width=\linewidth]{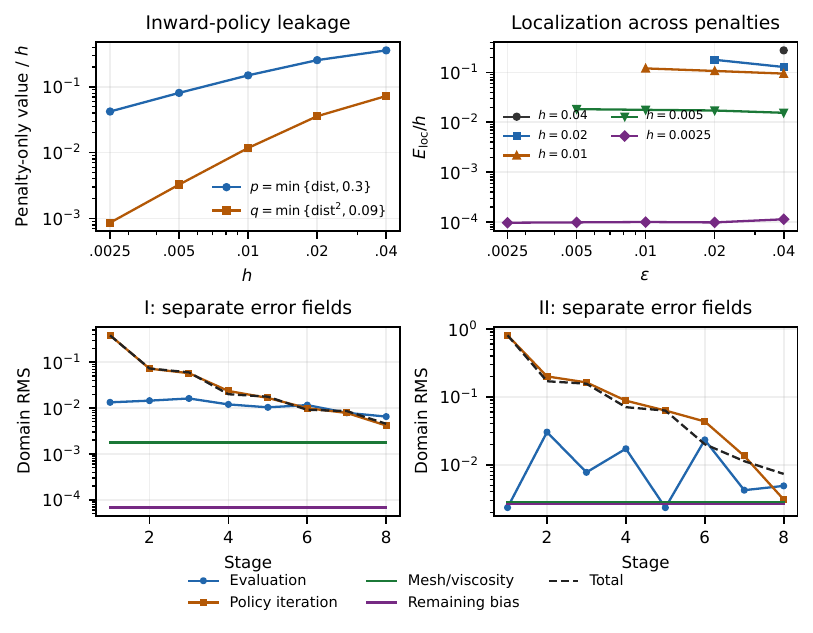}
\caption{Leakage and localization remain controlled across the tested
scales. Top left: $\|\widehat w_r\|_{\infty,\overline\Omega}/h$ for
the two penalties. Top right: localization error divided by $h$ for
all declared $h\le\varepsilon$ pairs with the same $L(h)$.
Bottom: means over five seeds of the separate field RMS norms in
\eqref{eq:numerical_decomposition}, with midpoint Dirichlet data.
All panels are finite-lattice diagnostics.}
\label{fig:error_mechanism}
\end{figure}

\subsection{Accuracy on explicit solutions}
\label{subsec:neural_exact_results}

Table~\ref{tab:exact_compact} compares PINN-PI with direct Bellman residual
minimization using paired initializations, architectures, samples and
nominal optimization budgets. Relative RMS divides the RMS error by the
RMS reference value, using $10{,}000$ equally spaced points in one dimension
and $20{,}000$ uniformly sampled domain points for the cylinder. These
full-domain tests include transverse directions. The cylindrical runs use
$k=1$ and $\rho=0.55$, with the remaining settings in Table~\ref{tab:aligned_protocol}. PINN-PI is more accurate
on average in dimensions five and ten; the advantage varies with dimension.
\input{generated/exact_compact}

The twenty-dimensional PINN-PI error remains about $16\%$. Its Bellman
residual decreases while the reference error stagnates and then increases
(Figure~\ref{fig:aligned_learning}). This illustrates why
a small sampled residual for a fixed approximate operator alone does not
establish accuracy for the constrained value. The explicit cylindrical
value makes this discrepancy observable throughout the domain.

\subsection{A paired obstacle-evaluation experiment}
\label{subsec:obstacle_navigation}

The free region is a disk of radius $1.2$ with five circular holes;
Figure~\ref{fig:paired_obstacle} shows the geometry. With $|a|\le1$ and
goal $x_g$, use
\[
 \begin{gathered}
 b(x,a)=1.25a,\quad \lambda=0.5,\quad x_g=(0.95,0.55),\\
 f(x,a)=\min\{c_g|x-x_g|^2,5\}+0.02|a|^2,\qquad c_g=385/512.
 \end{gathered}
\]
The cost cap is inactive in the free region. Set $\varepsilon=h=0.01$,
$\rho=2$, $\kappa=25$ and $(L,L')=(1.4,1.7)$. No safety filter is used.

For each of three seeds we compare two policy evaluators: raw frozen-policy
residual minimization and fitting the frozen-policy value computed on a
$341\times341$ grid with midpoint Dirichlet data. Both use identical
initial weights, training sample streams, width $256$, Adam batch size $4096$,
precision, and ten stages of $5000$ Adam updates followed by at most
$300$ L-BFGS iterations. The grid-assisted objective includes its finite-box
boundary treatment; it uses computed policy values, not optimal-value labels.
All policies are improved using the network's own centered gradient.

Finite-grid diagnostics compare each network with its frozen-policy value,
which also supplies the grid-assisted training targets. A separate solve
evaluates the final deployed greedy policy.
The protocol and seeds were fixed before training; $200$ common held-out test
starts are checked at three Euler step sizes $\Delta t$ over horizon $20$, stopping at
first entry into the goal ball of radius $0.15$. Every trajectory segment
is checked against all obstacles. Table~\ref{tab:paired_obstacle} reports all
six final controllers. Raw-arm failures stop near the goal but outside
radius $0.15$, at terminal distances about $0.18$--$0.21$; the arrival
counts measure this terminal precision. These finite-horizon tests do not
prove continuous-time admissibility from every initial state.
\input{generated/paired_obstacle}
\input{generated/paired_obstacle_results}

\begin{figure}[tbp]
\centering\includegraphics[width=0.78\linewidth]{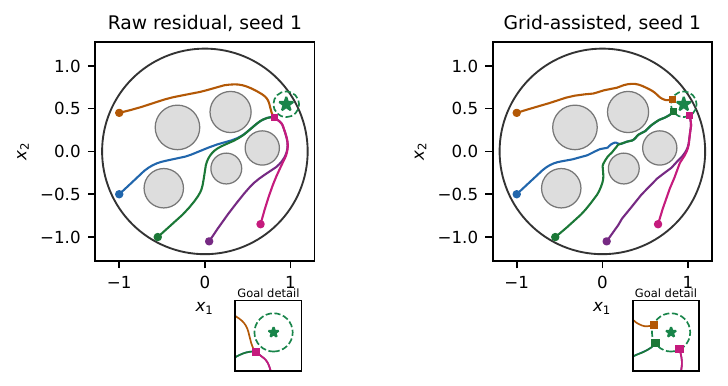}
\caption{The same five illustrative starts for the prespecified seed-one
pair at $\Delta t=0.00625$. Circles mark starts, squares stopped endpoints,
and the star the goal. Enlargements show terminal positions relative to
the goal ball. The table reports all three seeds on the same $200$ test starts.}
\label{fig:paired_obstacle}
\end{figure}

\FloatBarrier
Penalty-induced confinement yields $O(h)$ localization error with a
sufficient logarithmic margin independent of $\varepsilon$.
Convergence to the constrained value follows under the stated refinement
and evaluation-accuracy conditions. The reference calculations exhibit
controlled leakage and localization across the tested scales, including
$h=\varepsilon$. In the neural experiments, decreasing sampled residuals
can coexist with substantial value error; the paired obstacle study
connects policy-evaluation accuracy to terminal precision.

\FloatBarrier

%% file: benchmarks.tex
\subsection{Benchmark I: a cusp and a boundary correction}
\label{subsec:bench_prop511}

For $d=1$, $A=[-1,1]$, and $\lambda=1$, consider
\[
\dot x(t)=a(t),\qquad f(x,a)=e^{-|x|}.
\]
The corresponding state-dependent Hamiltonian is
\[
H(x,p)=\sup_{a\in[-1,1]}\{-ap-e^{-|x|}\}
=|p|-e^{-|x|}.
\]
For $k>0$, on $\Omega_k=(-k,k)$, Proposition~5.11 of \cite{KimTranTu2020} gives
\begin{equation}\label{eq:prop511_exact}
u_k(x)=\frac{e^{-|x|}}{2}+\frac{e^{|x|-2k}}{2},
\qquad
u_{\rm free}(x)=\frac{e^{-|x|}}{2},
\end{equation}
for the state-constraint and whole-space solutions, respectively.  Hence
\[
0\le u_k(x)-u_{\rm free}(x)=\frac{e^{|x|}}{2}e^{-2k},
\qquad x\in[-k,k],
\]
which yields exponential convergence on every fixed interior set as
$k\to\infty$.  This benchmark tests whether the numerical method resolves the
cusp at $x=0$ and the state-dependent running cost while preserving the
boundary correction in \eqref{eq:prop511_exact}.

\subsection{Benchmark II: boundary selection}
\label{subsec:bench_prop510}

For $k>0$, let $d=1$, $A=[-1,1]$, and $\lambda=1$, with
\[
\dot x(t)=a(t),\qquad f(a)=1-a.
\]
The Hamiltonian and state-constraint HJB equation are
\[
H(p)=\sup_{a\in[-1,1]}\{-ap-(1-a)\}=|p-1|-1,
\qquad u_k+H(u_k')=0\quad\text{in }(-k,k).
\]
Proposition~5.10 of \cite{KimTranTu2020} gives
\begin{equation}\label{eq:prop510_explicit}
u_k(x)=e^{x-k},\qquad x\in[-k,k],
\end{equation}
whereas the whole-space solution is $u_{\rm free}\equiv0$.  In particular,
$u_k(k)=1$ for every $k$, so convergence to the whole-space solution fails
uniformly near the moving boundary.  This is a stringent boundary-selection
test for the exterior penalty.

\subsection{Benchmark III: an exact solution on a high-dimensional cylinder}
\label{subsec:bench_highd}

For $d\ge2$ and $k>0$, define
\[
S(x)=\sum_{i=1}^d x_i,\qquad
Q(x)=\sum_{i=1}^d x_i^2-\frac{S(x)^2}{d},\qquad
r_k=\frac{k}{\sqrt{d(d-1)}},
\]
and the cylinder
\begin{equation}\label{eq:cyl_domain}
\Omega_k=\{x\in\R^d:\ |S(x)|<k,\ Q(x)<r_k^2\}.
\end{equation}
The cap--lateral corners lie outside the global $C^2$ domain hypothesis;
the direct value verification below does not require that hypothesis.
With $A=[-1,1]^d$, $\lambda=1$, and $\mathbf1=(1,\ldots,1)$, take
\[
\dot x(t)=\frac{a(t)}{d},\qquad
f(a)=\frac1d\sum_{i=1}^d(1-a_i).
\]
Writing $\|z\|_1=\sum_i|z_i|$, the separable Hamiltonian is
\begin{equation}\label{eq:cyl_hamiltonian}
H(p)=\frac1d\sum_{i=1}^d(|p_i-1|-1)
=\frac{\|p-\mathbf1\|_1}{d}-1.
\end{equation}
The exact state-constraint solution is
\begin{equation}\label{eq:cyl_exact_solution}
u_k(x)=\exp(S(x)-k),\qquad x\in\overline{\Omega}_k.
\end{equation}
The identity $Du_k=u_k\mathbf1$
verifies the equation in the interior. At
$S(x)=k$, a lower touching test function $\varphi$ satisfies
$\varphi+H(D\varphi)=\|D\varphi-\mathbf1\|_1/d\ge0$.  At the opposite cap
and on the lateral boundary, the feasible direction $\mathbf1$ gives
$D\varphi(x)\cdot\mathbf1\le Du_k(x)\cdot\mathbf1=du_k(x)$.
Together with
\[
\|D\varphi-\mathbf1\|_1\ge d-D\varphi\cdot\mathbf1
\]
this implies $u_k+H(D\varphi)\ge u_k-D\varphi\cdot\mathbf1/d\ge0$.
Thus
\eqref{eq:cyl_exact_solution} remains a valid reference even at the nonsmooth
cap--lateral intersections.

The value can also be verified directly, without a boundary comparison
theorem. The admissible control $a=\mathbf1$ until time $k-S(x)$, followed
by $a=0$, keeps $Q$ constant and has cost $e^{S(x)-k}$. Conversely, write
$S(t)=S(X^{x,a}(t))$ and $J(x,a)=\int_0^\infty e^{-t}f(a(t))\,dt$.
Every admissible trajectory satisfies $S(t)\le\min\{S(x)+t,k\}$.
Since $f=1-\dot S$, integration by parts gives
\[
 J(x,a)=1+S(x)-\int_0^\infty e^{-t}S(t)\,dt
          \ge e^{S(x)-k}.
\]
Thus the formula is the control value, including at the cap--lateral corners.

The value depends on the single coordinate $S$. These experiments test a
high-dimensional geometry and implementation with a known reference; they
do not establish accuracy for solutions with high intrinsic dimension.

%% file: generated/confinement_results.tex
Figure~\ref{fig:error_mechanism} (top) shows leakage divided by $h$ decreasing under refinement and localization errors at most $0.276h$ over all 15 pairs. The modest logarithmic margin illustrates uniform behavior over the tested penalty range; it does not test the conservative sufficient constant in the theorem. Appendix~\ref{supp:confinement_diagnostic} gives the outer-boundary and solver-error checks.

%% file: generated/mechanism_results.tex
The lower panels of Figure~\ref{fig:error_mechanism} separate the error fields. In II, the final mesh/viscosity difference and remaining-bias proxy have RMS $2.80\times10^{-3}$ and $2.62\times10^{-3}$, yet their sum has smaller RMS because the signed errors partly cancel. The final neural error also includes evaluation and policy-iteration errors.

%% file: generated/exact_compact.tex
\begin{table}[tbp]\centering\small
\caption{Relative RMS errors (\%), mean $\pm$ sample standard deviation over five paired seeds. The last column is the within-seed PI-minus-direct difference in percentage points; nominal optimization budgets are matched.}\label{tab:exact_compact}
\begin{tabular}{lrrr}\toprule Benchmark & PINN-PI & Direct Bellman & Paired difference \\\midrule
I & 1.69 $\pm$ 1.33 & 1.76 $\pm$ 0.49 & -0.07 $\pm$ 1.43 \\
II & 2.07 $\pm$ 0.78 & 2.26 $\pm$ 0.59 & -0.19 $\pm$ 1.06 \\
III, $d=2$ & 5.02 $\pm$ 0.98 & 5.38 $\pm$ 0.48 & -0.35 $\pm$ 0.79 \\
III, $d=5$ & 4.37 $\pm$ 0.63 & 5.62 $\pm$ 0.71 & -1.25 $\pm$ 1.33 \\
III, $d=10$ & 5.78 $\pm$ 1.62 & 8.05 $\pm$ 0.70 & -2.28 $\pm$ 0.95 \\
III, $d=20$ & 15.99 $\pm$ 3.20 & 15.04 $\pm$ 2.11 & 0.95 $\pm$ 4.60 \\
\bottomrule\end{tabular}\end{table}

%% file: generated/paired_obstacle.tex
\begin{table}[tbp]\centering\small\setlength{\tabcolsep}{4pt}
\caption{Paired obstacle evaluation at the final stage. Residual is domain Bellman RMS; evaluation and policy errors are domain grid maxima against the frozen-policy value and midpoint-boundary Bellman value, respectively. Arrivals and segment violations use 200 common test starts at $\Delta t=0.00625$.}\label{tab:paired_obstacle}
\begin{tabular}{lrrrrr}\toprule Evaluator, seed & Residual & Eval. error & Policy error & Arrivals & Violations \\\midrule
Raw, 1 & 0.023 & 0.070 & 0.055 & 0/200 & 0 \\
Grid-assisted, 1 & 0.161 & 0.022 & 0.029 & 200/200 & 0 \\
Raw, 2 & 0.020 & 0.057 & 0.054 & 67/200 & 0 \\
Grid-assisted, 2 & 0.135 & 0.015 & 0.015 & 200/200 & 0 \\
Raw, 3 & 0.018 & 0.059 & 0.036 & 0/200 & 0 \\
Grid-assisted, 3 & 0.135 & 0.015 & 0.008 & 200/200 & 0 \\
Finite-grid reference & --- & --- & --- & 200/200 & 0 \\
\bottomrule\end{tabular}\end{table}

%% file: generated/paired_obstacle_results.tex
Counts at all three time steps are provided in Appendix~\ref{supp:paired_obstacle}. At $\Delta t=0.00625$, across the three seeds, raw-residual evaluation reaches the goal from $0$--$67$ of the $200$ test starts, whereas grid-assisted evaluation reaches it from $200$ starts. The raw-trained networks have smaller sampled Bellman residuals, but larger frozen-policy and deployed-policy value errors and fewer goal arrivals. This paired experiment compares evaluation procedures, including their finite-grid boundary treatment.

%% file: appendices.tex
\input{policy_cost}

\section{Residual lifting}
\label{supp:l2_lifting}
\subsection{From \texorpdfstring{$L^2$}{L2} PINN residuals to value accuracy}
\label{subsec:l2_pinn_residuals_to_accuracy}

PINNs typically minimize an empirical mean-square residual. The following
estimate relates a population $L^2$ norm to the pointwise supremum under
Lipschitz regularity; controlling the population norm is a separate step.

\begin{lemma}
\label{lem:l2_to_linf_residual_lifting}
Let $D\subset\mathbb R^d$ be a bounded Lipschitz domain (for instance a
box, the only case used below). Let the pointwise function
$r:\overline D\to\R$ be Lipschitz and satisfy
\[
    \operatorname{Lip}(r;D)\le \Lambda.
\]
Then there exists a constant $C_D>0$, depending only on $D$ and $d$, such that
\begin{equation}\label{eq:l2_to_linf_residual_lifting}
    \|r\|_{L^\infty(D)}
    \le
    C_D
    \Lambda^{\frac{d}{d+2}}
    \|r\|_{L^2(D)}^{\frac{2}{d+2}}
    +
    C_D
    \|r\|_{L^2(D)}.
\end{equation}
In particular, if $\Lambda$ is uniformly bounded and
$\|r\|_{L^2(D)}\to0$, then $\|r\|_{L^\infty(D)}\to0$.
\end{lemma}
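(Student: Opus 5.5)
The plan is the standard interpolation argument between a Lipschitz bound and an $L^2$ bound. We localize around a near-maximizer of $|r|$ and use the interior cone condition of the Lipschitz domain $D$. Since $r$ is continuous on the compact set $\overline D$, pick $x_0\in\overline D$ with $|r(x_0)|=\|r\|_{L^\infty(D)}=:m$. If $m=0$ there is nothing to prove. For $y\in D$ with $|y-x_0|\le\rho$, the Lipschitz bound gives $|r(y)|\ge m-\Lambda\rho$. We choose $\rho=m/(2\Lambda)$, so that $|r|\ge m/2$ on $B_\rho(x_0)\cap D$. (If $\Lambda=0$, then $r$ is constant and $m=|D|^{-1/2}\|r\|_{L^2(D)}$ directly.)

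The geometric input is a lower volume bound $|B_\rho(x_0)\cap D|\ge c_D\min\{\rho,\rho_D\}^d$ for all $x_0\in\overline D$. Here $c_D,\rho_D>0$ depend only on $D$ and $d$, and the bound follows from the uniform interior cone condition of bounded Lipschitz domains. For a box it is elementary: each ball centered in the box contains an orthant-like piece of volume at least $2^{-d}|B_\rho|$ when $\rho\le$ the smallest side length. This step is the only one needing care; I would state it for boxes explicitly and cite the cone condition for general Lipschitz $D$.

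Combining the two facts gives
\[
\|r\|_{L^2(D)}^2\ \ge\ \int_{B_\rho(x_0)\cap D}|r|^2\ \ge\ \frac{m^2}{4}\,c_D\min\{\rho,\rho_D\}^d .
\]
Two cases follow. If $\rho=m/(2\Lambda)\le\rho_D$, then $\|r\|_{L^2}^2\ge c\,m^{d+2}\Lambda^{-d}$, hence $m\le C_D\Lambda^{d/(d+2)}\|r\|_{L^2}^{2/(d+2)}$. If $\rho>\rho_D$, then $\|r\|_{L^2}^2\ge c\,m^2\rho_D^d$, hence $m\le C_D\|r\|_{L^2}$. Taking the sum of the two right-hand sides covers both cases and yields \eqref{eq:l2_to_linf_residual_lifting}. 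The final assertion is immediate: for bounded $\Lambda$, both terms tend to zero as $\|r\|_{L^2(D)}\to0$.
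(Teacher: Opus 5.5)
Your proposal is correct and is essentially the paper's proof. Both arguments take a maximizer $x_0\in\overline D$, use the radius $\min\{m/(2\Lambda),\rho_D\}$ on which $|r|\ge m/2$, apply the uniform lower bound on $|B_\rho(x_0)\cap D|$, and split into two cases that produce the two terms, with $m=0$ and $\Lambda=0$ handled separately.

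One small slip in your box remark: the literal orthant piece toward the farther faces lies inside the box only when $\rho$ is at most half the smallest side length. That is simply a harmless choice of $\rho_D$.
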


\begin{proof}
The case $M:=\|r\|_{\infty,D}=0$ is immediate; if $\Lambda=0$,
$r$ is constant and the second term suffices. Otherwise choose
$x_0\in\overline D$ with $|r(x_0)|=M$. A bounded Lipschitz domain has
constants $c_D,r_D>0$ with $|B_s(x_0)\cap D|\ge c_Ds^d$ for
$0<s\le r_D$. Set $s=\min\{M/(2\Lambda),r_D\}$.
Then $|r|\ge M/2$ on this intersection, so
$\|r\|_{L^2(D)}^2\ge c_DM^2s^d/4$. The two choices of $s$ give
the two terms in \eqref{eq:l2_to_linf_residual_lifting}.
\end{proof}

For $D=\operatorname{int}(Q_{L'}^h)$, define the weighted residual norm by
$\|r\|_{L^2(\rho_{L'})}^2:=\int_D|r(x)|^2\rho_{L'}(x)\,dx$.
The population integral in \eqref{eq:pinn_policy_eval_loss} controls this
norm; the empirical collocation average alone does not. Assuming density bounds
\[
    0<\rho_{\min,L'}\le\rho_{L'}(x)\le\rho_{\max,L'}<\infty
    \qquad\text{on }Q_{L'}^h,
\]
one has
$\|r\|_{L^2(Q_{L'}^h)}\le\rho_{\min,L'}^{-1/2}\|r\|_{L^2(\rho_{L'})}$,
so the lemma applies to the unweighted norm; if $L'\to\infty$ along a
refinement sequence, the dependence of $\rho_{\min,L'}$ and of the lifting
constant $C_D$ on the box size must be included in the smallness
assumptions. Applying Lemma~\ref{lem:l2_to_linf_residual_lifting} to
\[
    r(x)=\mathcal L_{\varepsilon,h}^{\pi_n}v_{\theta_n}(x)
\]
therefore bounds its pointwise supremum if the residual itself has the
stated Lipschitz regularity. Corollary~\ref{cor:closed_certificate}
then bounds evaluation error, including the damped outer-strip term.
Neither a small empirical loss nor membership in a Sobolev equivalence
class alone controls values on an exceptional lattice coset.

\begin{remark}[Residual regularity and discontinuous policies]
\label{rem:residual_regularity}
The Lipschitz hypothesis on the residual is an additional restriction. The map
$x\mapsto\mathcal L_{\varepsilon,h}^{\pi_n}v_{\theta_n}(x)$ involves
$x\mapsto b(x,\pi_n(x))$ and $x\mapsto f(x,\pi_n(x))$, which can be
discontinuous across the switching surfaces of a bang-bang policy
(cf.~Remark~\ref{rem:no_selector_regularity}). A switching region of small
measure need not have a small residual supremum. The lifting lemma
therefore requires regularity of the residual itself.
Alternatively, let $Z\subset\overline D$ be a finite validation set with
covering radius $\eta=\sup_{x\in D}\min_{z\in Z}|x-z|$.
A known nondecreasing modulus $\omega$ satisfying
$|r(x)-r(y)|\le\omega(|x-y|)$ on $\overline D$ gives
$\|r\|_{\infty,D}\le\max_{z\in Z}|r(z)|+\omega(\eta)$.
This controls the supremum directly from pointwise samples.
\end{remark}


\section{Additional residual and selector observations}
\label{supp:evaluation_certificates}
This section gives a same-box evaluation estimate and explains the role
of selector discontinuities in greedy improvement.

\subsection{Estimating value mismatch by policy-evaluation residuals}
\label{subsec:value_mismatch_by_policy_eval_residual}

This same-box estimate retains an unknown boundary-strip mismatch.
The closed outer-box estimate \eqref{eq:closed_certificate} avoids this term.

For a policy $\pi$, define the boundary-strip evaluation mismatch
\begin{equation}\label{eq:boundary_strip_eval_mismatch}
    \mathfrak B_{\varepsilon,h,L}^{\pi}(v)
    :=
    \left\|
        v-S_{\varepsilon,h}(\pi)
    \right\|_{L^\infty(\partial_h Q_L)}.
\end{equation}

\begin{proposition}
\label{prop:value_mismatch_by_residual}
For a bounded measurable policy $\pi$ and bounded candidate value $v$,
\begin{equation}\label{eq:value_mismatch_by_residual}
    \left\|
        v-S_{\varepsilon,h}(\pi)
    \right\|_{L^\infty(Q_L)}
    \le
    \max
    \left\{
        \mathfrak B_{\varepsilon,h,L}^{\pi}(v),
        \frac{1}{\lambda}
        \mathfrak E_{\varepsilon,h,L}^{\infty}(v,\pi)
    \right\}.
\end{equation}
Consequently,
\begin{equation}\label{eq:xi_val_residual_bound}
    \xi_n^{\rm val}
    \le
    \max
    \left\{
        \mathfrak B_{\varepsilon,h,L}^{\pi_n}(v_{\theta_n}),
        \frac{1}{\lambda}
        \mathfrak E_{\varepsilon,h,L}^{\infty}
        (v_{\theta_n},\pi_n)
    \right\}.
\end{equation}
\end{proposition}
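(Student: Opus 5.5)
We compare $z=v-S_{\eps,h}(\pi)$ with its boundary-strip values through a discrete maximum principle on $Q_L^h$. This is the same-box analogue of Proposition~\ref{prop:interior_certificate}, except that we do not use the exit-time discount factor. With $r=\mathcal L^\pi_{\eps,h}v$, the frozen equation for $S_{\eps,h}(\pi)$ gives $\mathcal A^\pi z=r$ on $\R^d$. We only use this on $Q_L^h$, where $|r|\le\mathfrak E^\infty_{\eps,h,L}(v,\pi)=:E$. Since all stencil neighbours of points in $Q_L^h$ lie in $Q_L$, the equation on $Q_L^h$ involves only values of $z$ on $Q_L=Q_L^h\cup\partial_hQ_L$. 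Set $B=\mathfrak B^\pi_{\eps,h,L}(v)$.

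We will show $\sup_{Q_L}z\le\max\{B,E/\lambda\}$; the lower bound follows by applying the same argument to $-z$, which satisfies $\mathcal A^\pi(-z)=-r$. Write $M=\sup_{Q_L}z$, which is finite because $v$ and $S_{\eps,h}(\pi)$ are bounded. If $M\le B$ there is nothing to prove. Otherwise $M>B\ge\sup_{\partial_hQ_L}z$, so the supremum is approached only through points of $Q_L^h$. Pick $x_\eta\in Q_L^h$ with $z(x_\eta)\ge M-\eta$.

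At $x_\eta$, expand $\mathcal A^\pi z$ using the fixed-point form \eqref{pc:eq:fixed_point_form}. The identity $\mathcal A^\pi z=\mu z-\Lambda_hP^\pi z$ holds, and $P^\pi$ has nonnegative weights summing to one by \eqref{eq:monotonicity_condition_N}. All neighbours lie in $Q_L$, so $P^\pi z(x_\eta)\le M$. Hence
\[
E\ge r(x_\eta)=\lambda z(x_\eta)+\Lambda_h\bigl(z(x_\eta)-P^\pi z(x_\eta)\bigr)\ge\lambda(M-\eta)-\Lambda_h\eta .
\]
Letting $\eta\downarrow0$ gives $M\le E/\lambda$. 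This is exactly the argument of Lemma~\ref{pc:lem:comparison}, restricted to the box. Combining the two cases yields \eqref{eq:value_mismatch_by_residual}.

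The consequence \eqref{eq:xi_val_residual_bound} follows by taking $v=v_{\theta_n}$ and $\pi=\pi_n$, since by definition $\xi_n^{\rm val}=\|v_{\theta_n}-w_n\|_{\infty,Q_L}$.

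The only delicate point is the localization of the maximum. We must ensure that the stencil at points of $Q_L^h$ never leaves $Q_L$, which holds by the definition $Q_L^h=[-L+h,L-h]^d$. We must also handle the case where the supremum is not attained, since each lattice coset is treated separately. The approximating-sequence argument above handles both issues without any compactness.
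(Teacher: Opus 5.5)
Your proposal is correct and is essentially the paper's own proof. If $\sup_{Q_L}z$ exceeds the strip bound $B$, you take a near-maximizer $x_\eta\in Q_L^h$ (with $\eta<M-B$), use that all stencil neighbours lie in $Q_L$ to run the argument of Lemma~\ref{pc:lem:comparison} and get $\lambda(M-\eta)\le E+\Lambda_h\eta$, let $\eta\downarrow0$, and repeat for $-z$, with \eqref{eq:xi_val_residual_bound} following by the specialization $v=v_{\theta_n}$, $\pi=\pi_n$.
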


\begin{proof}
Set $z=v-S_{\eps,h}(\pi)$, $r=\mathcal L^\pi_{\eps,h}v$, and
$B=\|z\|_{\infty,\partial_hQ_L}$. If $\sup_{Q_L}z>B$, choose
$x_\eta\in Q_L^h$ with $z(x_\eta)\ge\sup_{Q_L}z-\eta$.
Every neighbor lies in $Q_L$, so the argument of
Lemma~\ref{pc:lem:comparison} gives
$\lambda(\sup_{Q_L}z-\eta)\le\|r\|_{\infty,Q_L^h}+\Lambda_h\eta$.
Let $\eta\downarrow0$ and repeat for $-z$. The case in which the
supremum is bounded by $B$ is immediate.
\end{proof}

\begin{remark}
\label{rem:boundary_strip_localization}
Corollary~\ref{cor:closed_certificate} controls the unknown strip mismatch
by validating on a nested outer box. For an interior target
$K\subset\overline\Omega\Subset Q_L^h$ and
$\lambda h\le2dN$,
Proposition~\ref{prop:interior_certificate} bounds its influence by the
factor $\beta_{h,L}\le e^{-\lambda d_K/(4dN)}$,
where $d_K=\dist(K,\partial_hQ_L)$ and
$\beta_{h,L}=\gamma_h^{\lceil d_K/h\rceil}$. The strip term
itself is bounded a priori by the observable quantity
\[
    \mathfrak B_{\varepsilon,h,L}^{\pi}(v)
    \le
    \|v\|_{L^\infty(\partial_h Q_L)}
    +
    \frac{\|f\|_\infty+M_p/\varepsilon}{\lambda},
\]
by Lemma~\ref{pc:lem:representation}. The margin geometry gives
$d_K\ge D_L$, so under \eqref{pc:eq:margin_condition} and
$h\le\varepsilon$, the a priori part satisfies
\[
    \beta_{h,L}\,\frac{\|f\|_\infty+M_p/\varepsilon}{\lambda}
    =O(h),
\]
while the network contribution
$\beta_{h,L}\|v\|_{L^\infty(\partial_hQ_L)}$ is $O(h)$ as well provided
the strip amplitude is uniformly bounded. For polynomial growth in
$1/h$ and $1/\varepsilon$, the logarithmic-margin coefficient must be
increased according to the growth exponents so that the product of the
strip amplitude and $\beta_{h,L}$ is $O(h)$. This gives an interior
evaluation bound with an explicitly controlled remainder.
\end{remark}

\subsection{Selector discontinuities and ties}

\begin{remark}
\label{rem:no_selector_regularity}
Requiring the greedy control to depend Lipschitz continuously on the
gradient excludes the bang-bang benchmark of
Section~\ref{subsec:bench_prop510}. For $H(p)=|p-1|-1$, the maximizing
control jumps between $\pm1$ as $p$ crosses $1$. The greedy-gap estimate
instead compares Hamiltonian values, which remain Lipschitz in the
gradient even when the selected controls jump.
\end{remark}

\begin{remark}
\label{rem:ties}
Two policies can be exactly greedy for the same value and yet have
different evaluated values. Thus a zero greedy gap alone cannot control
the difference between two improved policy values.

To see this directly, take the finite-lattice operator with $\lambda=h=1$,
$N=1/2$, controls $a\in\{-1,0,1\}$, drift $b=a$, interior nodes
$-1,0,1$, and zero boundary values at $\pm2$. The running costs,
with rows in node order and columns in control order, are
\[
 (c(x,a))_{x,a}=\begin{pmatrix}2&6&20\\2&3&0\\20&10&4\end{pmatrix}.
\]
The policy $a=0$ has value $w=(4,4,6)$. Its two greedy improvements
$(-1,-1,1)$ and $(-1,1,1)$ both have zero gap, but their evaluated
values are $(1,3/2,2)$ and $(1,1,2)$, respectively.
Their supremum difference is $1/2$, so even this finite-lattice case
precludes a bound proportional only to the greedy gap.

Proposition~\ref{prop:assumption_free_api} instead compares each new
policy value with the fixed point $v_L^*$. Its proof uses the one-sided
comparison $v_L^*\le w_{n+1}$ and contraction of the localized Bellman
operator, without comparing alternative greedy selectors.
\end{remark}


\section{Refinement from validated residuals}
\label{supp:refinement_certificates}
The evaluation condition in (R4) can be expressed through residual and
strip suprema via the nested-box certificate. These quantities require
validated bounds, not only samples.

\begin{corollary}
\label{cor:observable_refinement}
In the setting of
Corollary~\ref{cor:local_convergence_from_mismatch_control}, let each
residual be validated on an outer training box $Q_{L'_k}$ with
$Q_{L_k}\Subset Q_{L'_k}^{h_k}$ and margin
$d_k:=\dist(Q_{L_k},\partial_{h_k}Q_{L'_k})$, and set
\[
    \mathcal E_k
    :=
    \sup_{0\le j\le n_k}
    \bigl\|\mathcal L^{\pi_{k,j}}_{\varepsilon_k,h_k}v_{\theta_{k,j}}
    \bigr\|_{L^\infty(Q_{L'_k}^{h_k})},
    \qquad
    \mathcal A_k
    :=
    \sup_{0\le j\le n_k}
    \|v_{\theta_{k,j}}\|_{L^\infty(\partial_{h_k}Q_{L'_k})}.
\]
If (R1), (R2), (R3), and (R5) hold and
\begin{gather*}
    \frac{1}{h_k^{2}}
    \Bigl[
        \frac{\mathcal E_k}{\lambda}
        +e^{-\lambda d_k/(4dN)}
        \Bigl(\mathcal A_k
        +\frac{\|f\|_\infty+M_p/\varepsilon_k}{\lambda}\Bigr)
    \Bigr]
    \longrightarrow0,
    \\
    \frac{1}{h_k}\sup_{0\le j<n_k}
    \mathfrak G^{\infty}_{\varepsilon_k,h_k,L_k}
    (v_{\theta_{k,j}},\pi_{k,j+1})\longrightarrow0,
\end{gather*}
then the conclusions of
Corollary~\ref{cor:local_convergence_from_mismatch_control} hold. 
\end{corollary}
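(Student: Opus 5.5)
The plan is to reduce the statement to Corollary~\ref{cor:local_convergence_from_mismatch_control}. Hypotheses (R1), (R2), (R3) and (R5) are assumed directly. The second displayed condition is exactly the greedy-gap half of (R4). So the only thing to prove is the evaluation half of (R4), namely $h_k^{-2}\sup_{0\le j\le n_k}\xi_{k,j}^{\rm val}\to0$. I will obtain it from the closed evaluation bound, Corollary~\ref{cor:closed_certificate}, applied at each level $k$ and each index $j$.

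First I check the hypotheses of Corollary~\ref{cor:closed_certificate} at $(\varepsilon_k,h_k)$ with boxes $Q_{L_k}\Subset Q_{L'_k}^{h_k}$. The nesting is assumed. The condition $\lambda h_k\le2dN$ is part of (R1). The outer margin $d_{L,L'}=\dist(Q_{L_k},\partial_{h_k}Q_{L'_k})$ coincides with $d_k$. Since $\pi_{k,j}$ is bounded measurable and $v_{\theta_{k,j}}$ is bounded, \eqref{eq:closed_certificate} gives
\[
 \xi_{k,j}^{\rm val}\le\lambda^{-1}\bigl\|\mathcal L^{\pi_{k,j}}_{\varepsilon_k,h_k}v_{\theta_{k,j}}\bigr\|_{\infty,Q_{L'_k}^{h_k}}
 +e^{-\lambda d_k/(4dN)}\bigl(\|v_{\theta_{k,j}}\|_{\infty,\partial_{h_k}Q_{L'_k}}+M_{\varepsilon_k}\bigr),
\]
with $M_{\varepsilon_k}=(\|f\|_\infty+M_p/\varepsilon_k)/\lambda$. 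The residual norm is at most $\mathcal E_k$ and the strip amplitude is at most $\mathcal A_k$. Hence the right-hand side is bounded, uniformly in $0\le j\le n_k$, by the bracketed quantity in the first displayed hypothesis. Taking the supremum over $j$ and dividing by $h_k^2$ gives the evaluation half of (R4).

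With (R1)--(R5) now in hand, Corollary~\ref{cor:local_convergence_from_mismatch_control} yields both the $L^\infty(K)$ and the $L^2(K)$ convergence. If the squared penalty of Proposition~\ref{prop:squared_penalty} is used, $M_p$ is replaced by $R^2$ and (R2) by \eqref{eq:squared_margin}; nothing else in the argument changes.

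There is no genuine obstacle here; the work is bookkeeping. The one point needing care is uniformity in $j$, including the final index $j=n_k$ that enters the $\xi_n^{\rm val}$ term of \eqref{eq:local_solution_error_mismatch}. This is why $\mathcal E_k$ and $\mathcal A_k$ are defined as suprema over $0\le j\le n_k$. I should also note that the $\varepsilon_k$-dependent term $M_{\varepsilon_k}$ is absorbed only through the assumed decay of $e^{-\lambda d_k/(4dN)}$. So the outer margin $d_k$ may need to grow like $\log(1/(\varepsilon_kh_k^2))$, unlike the $\varepsilon$-independent improvement margin.
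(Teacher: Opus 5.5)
Your proposal is correct and matches the paper's proof: apply Corollary~\ref{cor:closed_certificate}, which is available since $\lambda h_k\le 2dN$ by (R1), to bound each $\xi_{k,j}^{\rm val}$, $0\le j\le n_k$, by the bracketed quantity. The first displayed condition then gives the evaluation half of (R4), and Corollary~\ref{cor:local_convergence_from_mismatch_control} applies.
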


\begin{proof}
By Corollary~\ref{cor:closed_certificate} (applicable since
$\lambda h_k\le2dN$ by (R1)), $\xi_{k,j}^{\rm val}$ is bounded by the
bracketed expression for every $j\le n_k$, so the displayed conditions
imply (R4).
\end{proof}

The first condition uses residual and strip suprema together with known
problem constants. A finite validation net bounds these suprema only
when its covering radius and a pointwise modulus of continuity are also
controlled, as in Remark~\ref{rem:residual_regularity}.

\begin{corollary}[Refinement from Lipschitz residuals and population $L^2$ bounds]
\label{cor:convergence_from_l2_pinn_residuals}
Use the setting, margins, and strip amplitudes of
Corollary~\ref{cor:observable_refinement}. Put
$D_k=\operatorname{int}(Q_{L'_k}^{h_k})$. Suppose each pointwise residual
\[
    r_{k,j}(x)
    :=
    \mathcal L_{\varepsilon_k,h_k}^{\pi_{k,j}}
    v_{\theta_{k,j}}(x),\qquad 0\le j\le n_k,
\]
is Lipschitz on $\overline D_k$ with constant at most $\Lambda_k$.
Assume the sampling density is bounded below by $\rho_{\min,k}>0$,
and let
\[
 \ell_k=\sup_{0\le j\le n_k}
   \left(\int_{D_k}|r_{k,j}(x)|^2\rho_{L'_k}(x)\,dx\right)^{1/2},
 \qquad a_k=\rho_{\min,k}^{-1/2}\ell_k.
\]
Define
\[
 E_k^{(2)}=C_{D_k}\left(
      \Lambda_k^{d/(d+2)}a_k^{2/(d+2)}+a_k\right).
\]
If (R1)--(R3), (R5), and the improvement-gap condition in (R4) hold, and
\[
 \frac1{h_k^2}\left[
      \frac{E_k^{(2)}}{\lambda}
      +e^{-\lambda d_k/(4dN)}
        \left(\mathcal A_k+M_{\varepsilon_k}\right)\right]\longrightarrow0,
\]
then $v_{\theta_{k,n_k}}\to u$ uniformly and in $L^2$ on $K$.
\end{corollary}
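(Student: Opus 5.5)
The plan is to reduce this statement to Corollary~\ref{cor:observable_refinement}. The only new ingredient is a pointwise bound on the evaluation residuals obtained from Lemma~\ref{lem:l2_to_linf_residual_lifting}. Concretely, I will show that
\[
 \mathcal E_k=\sup_{0\le j\le n_k}\|r_{k,j}\|_{L^\infty(Q_{L'_k}^{h_k})}\le E_k^{(2)}
\]
for every $k$. With $M_{\varepsilon_k}=(\|f\|_\infty+M_p/\varepsilon_k)/\lambda$, the displayed hypothesis then dominates the first displayed condition of Corollary~\ref{cor:observable_refinement}. The improvement-gap condition in (R4) is assumed verbatim, and (R1)--(R3), (R5) carry over. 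So Corollary~\ref{cor:observable_refinement}, and through it Corollary~\ref{cor:local_convergence_from_mismatch_control}, yields uniform and $L^2$ convergence of $v_{\theta_{k,n_k}}$ to $u$ on $K$.

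For the residual bound, fix $k$ and $0\le j\le n_k$.
\begin{enumerate}[leftmargin=2em]
\item Pass from the weighted to the unweighted norm. The density lower bound gives
\[
 \|r_{k,j}\|_{L^2(D_k)}\le\rho_{\min,k}^{-1/2}\Bigl(\int_{D_k}|r_{k,j}|^2\rho_{L'_k}\,dx\Bigr)^{1/2}\le\rho_{\min,k}^{-1/2}\ell_k=a_k .
\]
\item Apply the lifting. The set $D_k=\operatorname{int}(Q_{L'_k}^{h_k})$ is a box, hence a bounded Lipschitz domain. Since $r_{k,j}$ is Lipschitz on $\overline D_k$ with constant at most $\Lambda_k$, Lemma~\ref{lem:l2_to_linf_residual_lifting} gives
\[
 \|r_{k,j}\|_{L^\infty(D_k)}\le C_{D_k}\Lambda_k^{d/(d+2)}\|r_{k,j}\|_{L^2(D_k)}^{2/(d+2)}+C_{D_k}\|r_{k,j}\|_{L^2(D_k)} .
\]
The right-hand side is nondecreasing in the $L^2$ norm, so it is at most $E_k^{(2)}$ after substituting $a_k$.
\item Pass from $D_k$ to the closed box. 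The supremum in $\mathcal E_k$ is over the closed box $Q_{L'_k}^{h_k}=\overline D_k$. A Lipschitz function on $\overline D_k$ is continuous there, so its supremum over $\overline D_k$ equals its supremum over the dense interior $D_k$. (The lemma's proof already takes $x_0\in\overline D$.) Taking the supremum over $j$ gives $\mathcal E_k\le E_k^{(2)}$.
\end{enumerate}

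The expected obstacle is bookkeeping rather than analysis. First, $C_{D_k}$ and $\rho_{\min,k}$ depend on the box size as $L'_k\to\infty$. Here both are absorbed into $E_k^{(2)}$ and $a_k$, so no uniformity in $k$ is needed. Second, Corollary~\ref{cor:observable_refinement} must be applicable, which requires $\lambda h_k\le2dN$ for Corollary~\ref{cor:closed_certificate}; this is supplied by (R1).
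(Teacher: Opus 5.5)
Your proposal is correct and follows the paper's proof: the density lower bound gives $\|r_{k,j}\|_{L^2(D_k)}\le a_k$, Lemma~\ref{lem:l2_to_linf_residual_lifting} with continuity up to $\partial D_k$ gives $\mathcal E_k\le E_k^{(2)}$, and Corollary~\ref{cor:observable_refinement} then applies. Your remarks on the monotonicity of the lifting bound in the $L^2$ norm and on the identification $M_{\varepsilon_k}=(\|f\|_\infty+M_p/\varepsilon_k)/\lambda$ make explicit what the paper leaves implicit.
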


\begin{proof}
The density lower bound gives $\|r_{k,j}\|_{L^2(D_k)}\le a_k$.
Lemma~\ref{lem:l2_to_linf_residual_lifting} gives
$\mathcal E_k\le E_k^{(2)}$; continuity includes the boundary of $D_k$.
Apply Corollary~\ref{cor:observable_refinement}.
\end{proof}
The displayed condition accounts explicitly for the growth of the box,
the density factor, and the residual Lipschitz constant. It assumes a
bound on the population integral defining $\ell_k$, which must be
justified separately from an empirical training average.


\input{numerical_details}

%% file: policy_cost.tex
\section{An auxiliary cost bound along policy iteration}
\label{supp:policy_cost}
The following estimate bounds the costs of individual implemented policies
by the initial leakage cost and accumulated greedy gaps.

\begin{proposition}
\label{pc:prop:class_stability}
Assume the hypotheses of Lemma~\ref{pc:lem:leakage} and $0<h\le h_0$. Let
$\{\pi_n\}\subset\Pi_L$ be the localized PINN-PI policies starting from
$\pi_0=\bar\pi$, let
$w_n:=S_{\eps,h}(\pi_n)$, and set
\[
    g_n:=\sup_{Q_L^h}\Gamma_{\eps,h}(w_n,\pi_{n+1}).
\]
Then, for every $n\ge0$,
\begin{equation}
\label{pc:eq:class_stability}
    \sup_{\overline\Omega} w_n
    \ \le\
    \underbrace{\frac{\|f\|_\infty}{\lambda}+\frac{C_*h}{\eps}}_{=:C_0^{\rm def}}
    \;+\;
    \frac1\lambda\sum_{j=0}^{n-1}g_j
    \ =:\ C_n .
\end{equation}
In particular $\sup_K w_n\le C_n$, which is the bound required by the
confinement estimates of Section~\ref{pc:sec:confinement}.
If, in addition, $\pi_{n+1}$ is exactly greedy on $Q_L^h$ with respect to
the neural value $v_{\theta_n}$ and
$\xi_n^{\rm val}:=\|v_{\theta_n}-w_n\|_{L^\infty(Q_L)}$, then
$g_n\le(2\sqrt d\,M_b/h)\,\xi_n^{\rm val}$, so that
\begin{equation}
\label{pc:eq:class_stability_val}
    C_n
    \ \le\
    C_0^{\rm def}
    +\frac{2\sqrt d\,M_b}{\lambda\,h}\sum_{j=0}^{n-1}\xi_j^{\rm val}.
\end{equation}
\end{proposition}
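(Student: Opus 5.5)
We prove \eqref{pc:eq:class_stability} by induction on $n$ and then deduce \eqref{pc:eq:class_stability_val} from the greedy-gap transfer. The base case $n=0$ is Lemma~\ref{pc:lem:leakage}: since $\pi_0=\bar\pi$, estimate \eqref{pc:eq:leakage_value} gives $\sup_{\overline\Omega}w_0\le C_0^{\rm def}$. For the inductive step we do not try to compare $w_{n+1}$ with $v_L^*$. Instead we compare $w_{n+1}$ with $w_n$ directly through the frozen operator of the new policy. This is exactly what the supersolution step in the proof of Proposition~\ref{prop:assumption_free_api} provides.

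Concretely, write $\mathcal T^\pi\phi=c_\pi/\mu+\gamma_hP^\pi\phi$ and let $\mathcal T_L$ be the localized Bellman map. By the operator identities \eqref{eq:operator_identities}, on $Q_L^h$
\[
 \mathcal T^{\pi_{n+1}}w_n-\mathcal T_Lw_n=\Gamma_{\eps,h}(w_n,\pi_{n+1})/\mu\le g_n/\mu .
\]
Outside $Q_L^h$ both policies equal $\bar\pi$, so the difference vanishes there. Since $w_n=\mathcal T^{\pi_n}w_n\ge\mathcal T_Lw_n$, we get
\[
 \mathcal T^{\pi_{n+1}}w_n\le w_n+g_n/\mu .
\]
Adding the constant $c_n=g_n/[\mu(1-\gamma_h)]=g_n/\lambda$ makes $w_n+c_n$ a supersolution of $\mathcal T^{\pi_{n+1}}$. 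Monotone iteration together with the contraction property then gives $w_{n+1}\le w_n+g_n/\lambda$ pointwise on $\R^d$; equivalently, apply Lemma~\ref{pc:lem:comparison} to $z=w_{n+1}-w_n$.

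Taking the supremum over $\overline\Omega$ and using the induction hypothesis gives $\sup_{\overline\Omega}w_{n+1}\le C_n+g_n/\lambda=C_{n+1}$. The bound on $K\subset\overline\Omega$ is immediate. The only subtle point is that the one-step inequality must hold globally, not just on $\overline\Omega$. It does, because the gap vanishes identically outside $Q_L^h$ by the common exterior policy; this is why the localized class $\Pi_L$ is needed. No continuity of selectors enters.

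For \eqref{pc:eq:class_stability_val}, exact greediness of $\pi_{n+1}$ for $v_{\theta_n}$ on $Q_L^h$ means $\Gamma_{\eps,h}(v_{\theta_n},\pi_{n+1})=0$ there. Lemma~\ref{lem:greedy_gap_transfer}, in the form \eqref{eq:greedy_gap_transfer_applied}, then yields $g_n\le 2\sqrt d\,M_b\,\xi_n^{\rm val}/h$. This uses that the stencil of any point of $Q_L^h$ lies in $Q_L$, where $\xi_n^{\rm val}$ controls $v_{\theta_n}-w_n$. Summing over $j<n$ gives the stated bound. We expect no real obstacle here; the main care is checking that the supersolution argument is valid on the whole space with bounded functions, which Lemma~\ref{pc:lem:representation} ensures.
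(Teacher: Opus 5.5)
Your proof is correct and matches the paper's argument: the paper applies Lemma~\ref{pc:lem:comparison} directly to $z=w_{n+1}-w_n$. It uses $\mathcal A^{\pi_{n+1}}z=-\mathcal L^{\pi_{n+1}}_{\eps,h}w_n=\Gamma_{\eps,h}(w_n,\pi_{n+1})-\Gamma_{\eps,h}(w_n,\pi_n)\le g_n$ on $Q_L^h$, with zero source outside, which is your inequality $\mathcal T^{\pi_{n+1}}w_n\le w_n+g_n/\mu$ in fixed-point form, as you note yourself. The base case via Lemma~\ref{pc:lem:leakage} and the greedy-gap transfer giving \eqref{pc:eq:class_stability_val} are handled the same way as in the paper.
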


\begin{proof}
The base case is Lemma~\ref{pc:lem:leakage}:
$w_0=S_{\eps,h}(\bar\pi)\le C_0^{\rm def}$ on $\overline\Omega$.
For the inductive step, let
$z:=w_{n+1}-w_n$.  As in Lemma~\ref{pc:lem:no_exterior_source},
$\mathcal A^{\pi_{n+1}}z=-\mathcal L^{\pi_{n+1}}_{\eps,h}w_n$, the source
vanishes on $\R^d\setminus Q_L^h$ (both policies equal $\bar\pi$ there and
$\mathcal L^{\pi_n}_{\eps,h}w_n=0$), while on $Q_L^h$
\[
    -\mathcal L^{\pi_{n+1}}_{\eps,h}w_n
    =
    -\bigl(\mathcal L^{\pi_{n+1}}_{\eps,h}w_n
    -\mathcal L^{\pi_n}_{\eps,h}w_n\bigr)
    =
    \Gamma_{\eps,h}(w_n,\pi_{n+1})-\Gamma_{\eps,h}(w_n,\pi_n)
    \ \le\ g_n ,
\]
using $\Gamma_{\eps,h}\ge0$.  Lemma~\ref{pc:lem:comparison} gives
$\sup_{\R^d}(w_{n+1}-w_n)\le g_n/\lambda$, and
\eqref{pc:eq:class_stability} follows by summation.  The bound on $g_n$
under exact hard improvement follows from
Lemma~\ref{lem:greedy_gap_transfer}, since
$\Gamma_{\eps,h}(v_{\theta_n},\pi_{n+1})=0$ on $Q_L^h$.
\end{proof}

\begin{remark}
\label{pc:rem:class_condition}
Proposition~\ref{pc:prop:class_stability} gives a sufficient condition
under which the PI-generated policies remain uniformly bounded-cost, hence
subject to the confinement estimates: under exact greedy improvement, with $h\le\eps$ and
$\sum_j\xi_j^{\rm val}\le c\,h$ for a fixed $c>0$, one has
$\sup_K w_n\le C_{\rm bd}:=\|f\|_\infty/\lambda+C_*
+2\sqrt d\,M_b\,c/\lambda$ for all $n$, uniformly in
$(\eps,h,L)$. At mesh size $h$, write $\xi_{h,j}^{\rm val}$ for the
evaluation error at iteration $j$. For $n_h$ improvements, it suffices that
$\sum_{j<n_h}\xi_{h,j}^{\rm val}\le c\,h$; a uniform sufficient
condition is $n_h\sup_{j<n_h}\xi_{h,j}^{\rm val}=O(h)$. Since
$(1-\gamma_h)/h\to\lambda/(2dN)>0$ and the initial global error is $O(h^{-1})$ when
$h\le\eps$, choosing $n_h$ proportional to $|\log h|/h$ with a
sufficiently large constant makes the initial-error bound a power
of $h$. In this regime a sufficient uniform evaluation
accuracy is $\sup_{j<n_h}\xi_{h,j}^{\rm val}=O(h^2/|\log h|)$.
This condition gives a confinement interpretation for the individual
PI policies. The main localization theorem and discounted recursion
do not require this additional summability condition.
\end{remark}


%% file: numerical_details.tex
\section{Numerical protocol and reproducibility}
\label{supp:numerical_protocol}

Section~\ref{sec:experiments} reports final-stage results from all prescribed seeds.
The code and recorded artifacts are available at
\url{https://github.com/yeoneung/state_constraint_discrete}
(release \texttt{experiments-2026-09-13}).
The manifest \path{reproduction/aligned/PAPER_RUNS.json}
identifies each run, its configuration, saved
evaluation-policy/network pairs, source snapshot, and diagnostic outputs.

\paragraph{Shared operator and networks}
The default policy is $-\tanh x$ in one dimension and
$-x/\max\{1,|x|\}$ for the cylinder. For obstacles, disjoint boundary
collars point away from holes and toward the workspace center. Their width
is one quarter of the minimum boundary separation or hole radius; the
implementation gives the continuous extension away from the collars.
Every improved policy equals this same default outside $Q_L^h$.
Writing $z$ for the scalar MLP output before the value head, the head is
$B\tanh(z/B)$ with $B=2M_\varepsilon$; obstacle
networks apply softplus to $z$ first. Cylindrical inputs augment the full
coordinates by $S/k$ and $\log(1+Q/r_k^2)$ without imposing invariance.
Obstacle inputs append the normalized signed distances to the holes and
outer circle, and the Euclidean distance to the goal, to the two coordinates.
No analytic-value target, radial-invariance loss or prescribed goal value is
used. The grid-assisted evaluator uses computed frozen-policy values.

\paragraph{Optimization and sampling}
The exact-benchmark sampler allocates $40\%$ of points to the domain,
$30\%$ to boundary shells, $20\%$ to a nearby exterior region and
$10\%$ to the larger box. Samples refresh every 25 Adam updates.
The initial learning rate is $10^{-3}$ with cosine decay to one tenth
within each stage; Adam gradients are clipped at norm 10. L-BFGS uses
strong-Wolfe search and history size 30. The stated batch sizes apply to
Adam; each L-BFGS phase uses a fresh fixed batch of twice that size.
For the exact benchmarks, its gradient and change tolerances are
$10^{-9}$ and $10^{-12}$. Its actual function evaluations
are recorded, so a nominal iteration budget is not confused with an equal
number of objective evaluations. Paired methods use the same initial
weights and sampling seeds. Analytic references never enter optimization.
\input{generated/protocol_table}

\paragraph{Validation and resources}
One-dimensional errors use 10,000 equally spaced domain points; cylinder
errors use 20,000 uniformly sampled domain points, including transverse
directions. Residual validation and all final table metrics use float64.
Learning curves use the training precision and the same evaluation points
at every stage; these points are separate from the training samples.
Residual sample maxima are not continuum supremum certificates. One-dimensional
reference checks cover all nodes of one lattice coset, using double precision
without outward rounding. Computations used an RTX 5070 GPU and an
i9-14900KF CPU. CPU/GPU placement and concurrency were selected by measured
throughput; timing records include the concurrent queue context.

\section{Policy and approximation diagnostics}
\label{supp:mechanism_details}

\paragraph{Leakage and localization across penalty scales}
\label{supp:confinement_diagnostic}
The leakage and localization diagnostic uses Benchmark II on $[-2,2]$ with
$N=4$, $\bar\pi(x)=-\tanh x$, and every $h\le\varepsilon$ pair from
$\{0.04,0.02,0.01,0.005,0.0025\}$. The larger viscosity coefficient
makes leakage and localization visible over this mesh range.
All 15 configurations use $L=2+0.06+0.02\log(0.04/h)$ and outer radius
$40$. The unrestricted solve optimizes at every interior outer-grid node;
the localized solve fixes $\bar\pi$ outside $Q_L^h$.
Both use boundary data $0$ and $M_\varepsilon=2+0.09/\varepsilon$,
and reported values are their midpoints. The pure leakage equations use
sources $p(x)=\min\{\dist(x,[-2,2]),0.3\}$ and $q(x)=p(x)^2$ without
division by $\varepsilon$, with endpoint data $0/0.3$ and $0/0.09$,
respectively. Their contribution to penalized cost is
$\widehat w_p/\varepsilon$ or $\widehat w_q/\varepsilon$.
Each source equation is solved by a banded linear solver; nonlinear
values use finite-lattice policy iteration with the same centered operator.
For each localization difference we add both computed boundary-bracket
widths and the two Bellman residual suprema divided by $\lambda$.
\input{generated/confinement_details}
The corner checks use $(h,\varepsilon)=(0.04,0.04)$,
$(0.0025,0.04)$, and $(0.0025,0.0025)$.
The modest margin is an empirical logarithmic rule; these finite-scale
observations establish neither the optimal margin nor a sharp leakage rate.

\paragraph{Neural policy errors and the signed decomposition}
For every one-dimensional saved PINN-PI stage, the independent banded
solver computes the frozen-policy value and the value of the newly greedy
policy with common zero Dirichlet data. The recursion error is the maximum
over the complete finite grid, and $g_n$ is the true greedy gap on the
improvement-grid nodes. Thus these diagnostics concern the same errors as
the finite-box version of the policy-iteration recursion. They do not infer
policy contraction from the neural loss. Across the 40 transitions of each
benchmark, the median bound-to-error ratios are 6.7 and 14.1 for I and II;
the largest ratios are approximately 2449 and 85, respectively. The bound
is sufficient and can be very conservative.
\begin{figure}[tbp]
\centering\includegraphics[width=\linewidth]{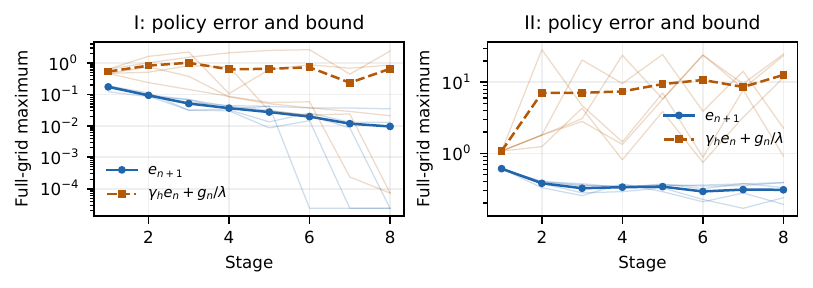}
\caption{The finite-grid policy-error recursion for every saved stage.
Thin curves show the five seeds and thick curves their means; both
benchmarks use common zero Dirichlet data.}
\label{fig:policy_recursion}
\end{figure}

For the signed decomposition in \eqref{eq:numerical_decomposition}, all four fields are
evaluated on the same 267 domain nodes of the $h=0.015$ lattice. The outer
grid endpoints are $\pm7.005$; common midpoint Dirichlet data are used for
the Bellman and policy values. Nested refinement preserves these endpoints,
the penalty and both boxes while reducing $h$ and $\nu_h$ together. Starting
with refinement factor 16, the factor doubles until the last mesh halving
changes the remaining-bias field by at most $2\%$ of its RMS. This fixed
reference-accuracy criterion gives factors 1024 and 64 for I and II,
respectively. Their successive differences are $1.07\times10^{-6}$ and
$3.25\times10^{-5}$. This is a refinement diagnostic, not a rigorous error
enclosure. The remaining-bias field includes penalty, localization and
unresolved fine-mesh effects. The signed fields close the identity to
floating-point precision; cancellation between these fields makes their
RMS norms nonadditive.
\input{generated/mechanism_details}

Separate parameter sweeps measure the sensitivity of the total
constrained-value error. In particular, a mesh sweep at fixed
penalty need not approach zero error or even decrease monotonically: the
penalty bias remains and can cancel discretization error. The localization
and outer-boundary effects in these configurations are already small;
the plots do not establish sharp localization rates.
\input{generated/reference_results}
\begin{figure}[tbp]
\centering\includegraphics[width=\linewidth]{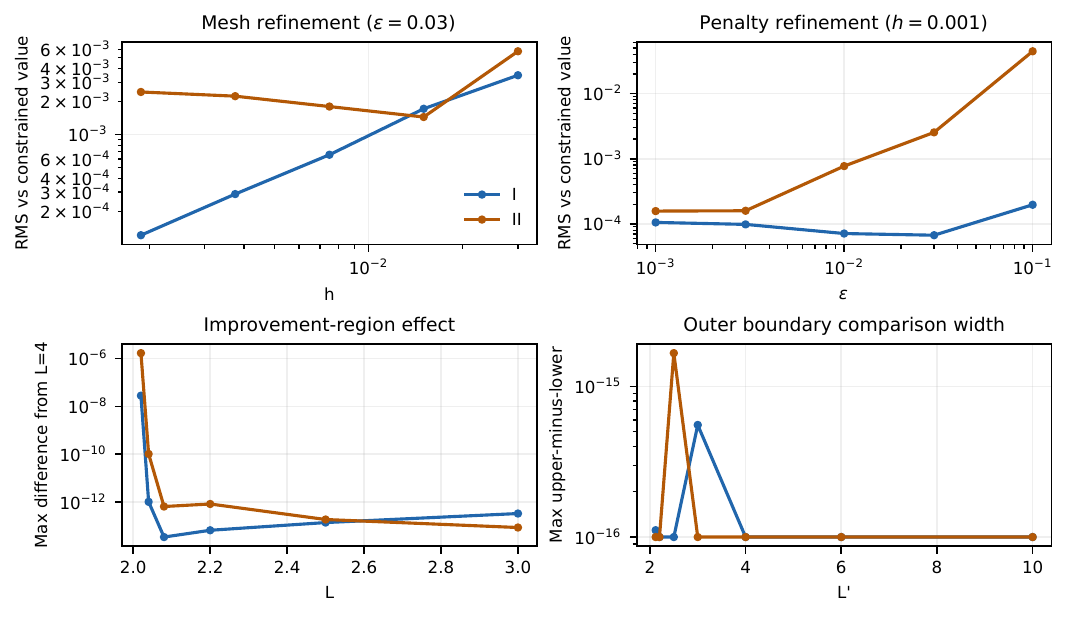}
\caption{Additional one-dimensional reference sweeps. The top panels show
total RMS error against the constrained solution, not isolated penalty or
mesh errors. The bottom panels measure improvement-box sensitivity and
outer comparison width; widths below $10^{-16}$ are plotted at that floor.}
\label{fig:reference_studies}
\end{figure}

\section{Additional explicit-benchmark results}

The following tables report absolute errors, sampled maxima, and residuals
for the explicit benchmarks; sampled maxima do not certify continuum suprema.
\input{generated/exact_results}
\input{generated/baseline_results}
\begin{figure}[tbp]
\centering\includegraphics[width=\linewidth]{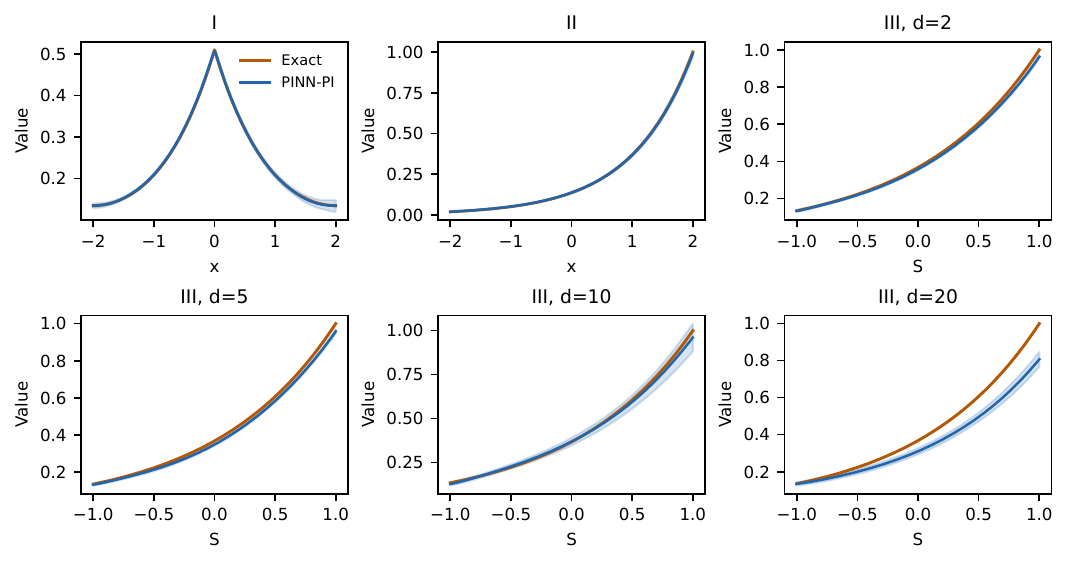}
\caption{Exact values and final PINN-PI predictions. Curves are seed means,
with one sample standard deviation shaded. Cylinder curves are axial slices;
the table metrics use the full domain.}
\label{fig:exact_solutions}
\end{figure}
\begin{figure}[tbp]
\centering\includegraphics[width=\linewidth]{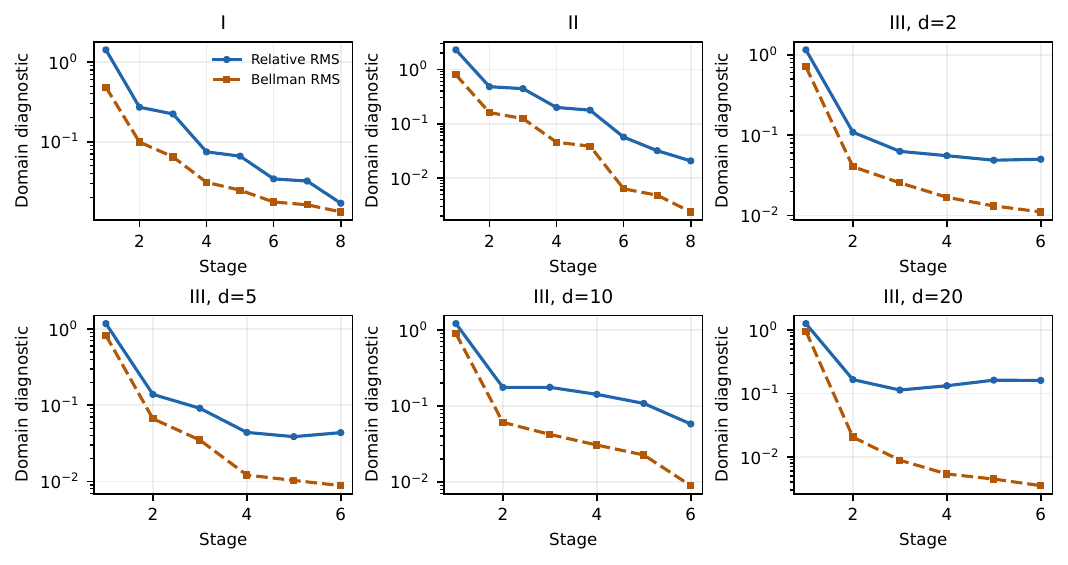}
\caption{Domain errors and Bellman residuals during PINN-PI,
averaged over all five seeds. The twenty-dimensional error can increase
while its residual decreases. Evaluation points are separate from training
samples, and the residuals are finite-sample diagnostics.}
\label{fig:aligned_learning}
\end{figure}

\section{Paired obstacle protocol and additional outcomes}
\label{supp:paired_obstacle}

The five holes have centers $(-0.32,0.28)$, $(0.25,-0.20)$,
$(-0.48,-0.43)$, $(0.30,0.46)$ and $(0.67,0.04)$, and radii
$0.26$, $0.18$, $0.23$, $0.24$ and $0.20$, respectively.
The goal-cost coefficient in Section~\ref{subsec:obstacle_navigation} is exactly
$c_g=385/512=0.751953125$. The exact greedy control inside the improvement box is
\[
 a_v(x)=\operatorname{proj}_{\overline B_1(0)}
       \left(-\frac{1.25}{0.04}\nabla_hv(x)\right).
\]
Here $\operatorname{proj}_{\overline B_1(0)}(y)=y/\max\{1,|y|\}$ is
Euclidean projection onto the closed unit ball.
The sampling mixture is $25\%$ uniform free points, $15\%$ free points
near the goal (Gaussian scale $0.2$, rejected outside the free region),
$30\%$ boundary shells, $20\%$ outer annulus points and $10\%$ uniform
box points. The near-goal points carry residuals or computed policy values,
not a prescribed value. Both paired arms use float32 training with TF32
disabled; all grid solves, table diagnostics and rollouts use float64.
L-BFGS tolerances are $10^{-8}$ for the gradient and $10^{-11}$ for change,
identically in both arms. Equal nominal budgets do not imply equal actual
line-search counts or wall time.

The two arms share initial-model hashes, per-stage sampling-generator
states and recorded batch hashes. Only the evaluation objective changes.
The grid-assisted arm fits the midpoint of the $0/M_\varepsilon$ frozen-policy
solutions, with bilinear interpolation for off-grid targets. This equals
the finite-grid policy value with boundary $M_\varepsilon/2$. The raw arm
uses the whole-space network residual; its grid solves are
diagnostics and do not enter its loss. The comparison therefore changes
the evaluation procedure, including boundary information and numerical
preconditioning. It does not isolate one scalar error measure as the sole
cause of navigation performance. The grid reference already supplies a
feedback; here the network is a learned value representation whose greedy
feedback is evaluated. Sparse policy solves are additional training work
for the grid-assisted arm.

The three paired seeds and the protocol were fixed in the run manifest
before training. The held-out set contains 200 common starts generated
with seed 20260911 and excludes the goal ball. We report all six controllers.
The initial states are shared across seeds and step sizes, so the tests are
not independent samples of initial states across controllers. Each Euler
segment is checked analytically against the circular holes and outer disk.
Strict penetration uses tolerance $10^{-12}$; a separate $10^{-4}$-tolerance
diagnostic is also stored. Termination is at first entry into the goal ball
or horizon 20. All paths, including failures, are saved.
\input{generated/paired_obstacle_details}
\FloatBarrier

The raw-arm failures settle near the goal but outside its prescribed ball:
their terminal distances are approximately $0.209$, $0.206$ and $0.176$
for seeds 1, 2 and 3. Thus the arrival counts concern the stated radius
$0.15$, and should not be read as an inability to traverse the free region.

%% file: generated/protocol_table.tex
\begin{table}[tbp]\centering\small\setlength{\tabcolsep}{3pt}
\caption{Exact-benchmark protocol, common to paired methods. All MLPs have three hidden layers. Adam uses batches of $1024$; L-BFGS uses a fresh fixed batch of $2048$. The last column is the L-BFGS iteration budget per stage.}\label{tab:aligned_protocol}
\begin{tabular}{lrrrrrr}\toprule Benchmark & $(\varepsilon,h)$ & $\kappa$ & $(L,L')$ & Width & Adam updates & L-BFGS \\\midrule
I & (0.03,0.015) & 1 & (4,7) & 64 & $8\times 1600$ & 100  \\
II & (0.03,0.015) & 1 & (4,7) & 64 & $8\times 1600$ & 100  \\
III, d=2 & (0.02,0.008) & 1 & (2,4) & 96 & $6\times 600$ & 30  \\
III, d=5 & (0.02,0.008) & 1 & (2,4) & 96 & $6\times 600$ & 30  \\
III, d=10 & (0.02,0.006) & 1 & (2,4) & 96 & $6\times 600$ & 30  \\
III, d=20 & (0.02,0.006) & 1 & (2,4) & 96 & $6\times 600$ & 30  \\
\bottomrule\end{tabular}\end{table}
One-dimensional training uses float64 on CPU (eight threads per run); the cylindrical runs use float32 on GPU. All table diagnostics are reevaluated in float64. TF32 is disabled. The learning-rate floor is one tenth of the stage's initial rate. No stage is selected by test error.

%% file: generated/confinement_details.tex
The maximum combined boundary-width and residual estimate is $1.63\times10^{-10}$, and it is less than $0.07\%$ of the measured localization difference in every case. Increasing the outer radius from $40$ to $60$ at the three declared corner configurations changes the localization difference by at most $1.14\times10^{-14}$. These are double-precision checks without outward rounding.

%% file: generated/mechanism_details.tex
\begin{table}[tbp]\centering\small\setlength{\tabcolsep}{4pt}
\caption{Final signed error-field RMS, averaged over the five PINN-PI seeds. The norms are not additive. The last column is a successive fine-mesh difference, not a rigorous continuum error bound.}\label{tab:mechanism_details}
\begin{tabular}{lrrrrrr}\toprule & Evaluation & PI & Mesh/viscosity & Bias proxy & Total & Fine change \\\midrule
I & $6.51\times10^{-3}$ & $4.15\times10^{-3}$ & $1.78\times10^{-3}$ & $6.75\times10^{-5}$ & $4.53\times10^{-3}$ & $1.07\times10^{-6}$ \\
II & $4.88\times10^{-3}$ & $3.08\times10^{-3}$ & $2.80\times10^{-3}$ & $2.62\times10^{-3}$ & $7.32\times10^{-3}$ & $3.25\times10^{-5}$ \\
\bottomrule\end{tabular}\end{table}

%% file: generated/reference_results.tex
The reference study contains 62 configurations. Along the coupled sequence $h=\varepsilon/2$, reducing $\varepsilon$ from $0.1$ to $0.003$ changes the two RMS errors from $4.95\times10^{-3}$--$4.30\times10^{-2}$ to $1.45\times10^{-4}$--$1.72\times10^{-4}$. Separate mesh and penalty sweeps are shown in Figure~\ref{fig:reference_studies}. Six deliberately inexact evaluations use perturbation amplitudes $0.001$, $0.01$, and $0.05$; all 174 recorded recursion checks have nonnegative numerical slack (minimum $2.22\times10^{-14}$). The localization plot compares finite-lattice values with the largest improvement box; the outer-box plot reports the independently computed comparison width. Widths below $10^{-16}$ are displayed at the plotting floor.

%% file: generated/exact_results.tex
\begin{table}[tbp]\centering\small\setlength{\tabcolsep}{3pt}
\caption{Final PINN-PI errors, mean $\pm$ sample standard deviation over five seeds. All metrics use double-precision evaluation at domain points separate from training samples.}\label{tab:aligned_exact}
\begin{tabular}{lrrr}\toprule Benchmark & RMS & Relative RMS (\%) & Sample maximum \\\midrule
I & 0.005 $\pm$ 0.004 & 1.69 $\pm$ 1.33 & 0.011 $\pm$ 0.008 \\
II & 0.007 $\pm$ 0.003 & 2.07 $\pm$ 0.78 & 0.021 $\pm$ 0.009 \\
III, d=2 & 0.025 $\pm$ 0.005 & 5.02 $\pm$ 0.98 & 0.072 $\pm$ 0.020 \\
III, d=5 & 0.022 $\pm$ 0.003 & 4.37 $\pm$ 0.63 & 0.057 $\pm$ 0.008 \\
III, d=10 & 0.029 $\pm$ 0.008 & 5.78 $\pm$ 1.62 & 0.068 $\pm$ 0.022 \\
III, d=20 & 0.080 $\pm$ 0.016 & 15.99 $\pm$ 3.20 & 0.183 $\pm$ 0.033 \\
\bottomrule\end{tabular}\end{table}

%% file: generated/baseline_results.tex
\begin{table}[tbp]\centering\small\setlength{\tabcolsep}{3pt}
\caption{Matched nominal optimization budgets: PINN-PI and direct Bellman residual minimization. Entries are seed means; detailed timing records are supplied in the machine-readable tables.}\label{tab:aligned_baseline}
\begin{tabular}{lrrrr}\toprule & \multicolumn{2}{c}{Relative RMS (\%)} & \multicolumn{2}{c}{Bellman RMS} \\ Benchmark & PI & Direct & PI & Direct \\\midrule
I & 1.689 & 1.759 & 0.013 & 0.013 \\
II & 2.068 & 2.256 & 0.002 & 0.003 \\
III, d=2 & 5.025 & 5.379 & 0.011 & 0.011 \\
III, d=5 & 4.371 & 5.617 & 0.009 & 0.008 \\
III, d=10 & 5.778 & 8.054 & 0.009 & 0.005 \\
III, d=20 & 15.989 & 15.040 & 0.004 & 0.004 \\
\bottomrule\end{tabular}\end{table}
The comparison does not show a uniform advantage for either method. Actual objective evaluations, elapsed times, all seed results, and finite-sample maxima are supplied in the machine-readable tables. Small residuals for a fixed penalized operator alone do not remove its penalty and discretization bias.

%% file: generated/paired_obstacle_details.tex
\begin{table}[tbp]\centering\small
\caption{All paired obstacle test outcomes. Triples correspond to $\Delta t=0.025,0.0125,0.00625$; every entry uses the same 200 starts. Times are measured per run while sharing the machine; grid time is diagnostic overhead for the raw arm.}\label{tab:paired_obstacle_details}
\begin{tabular}{lrrrr}\toprule Evaluator, seed & Arrivals & Violations & Neural time (s) & Grid time (s) \\\midrule
Raw, 1 & 0/0/0 & 0/0/0 & 272.4 & 13.2 \\
Grid-assisted, 1 & 200/200/200 & 0/0/0 & 117.0 & 13.4 \\
Raw, 2 & 68/68/67 & 0/0/0 & 268.9 & 14.6 \\
Grid-assisted, 2 & 200/200/200 & 0/0/0 & 129.2 & 15.9 \\
Raw, 3 & 0/0/0 & 0/0/0 & 218.5 & 13.4 \\
Grid-assisted, 3 & 200/200/200 & 0/0/0 & 126.5 & 13.9 \\
\bottomrule\end{tabular}\end{table}

%% file: declarations.tex
\section*{Declarations}
\addcontentsline{toc}{section}{Declarations}
OpenAI ChatGPT and Codex were used to assist with research and manuscript preparation.
The authors assume responsibility for all content.